\tikzset{node distance=2em, ch/.style={circle,draw,on chain,inner sep=2pt},chj/.style={ch,join},every path/.style={shorten >=4pt,shorten <=4pt},line width=1pt,baseline=-1ex}
\def\R{\mathbb{R}}
\def\C{\mathbb{C}}
\def\Q{\mathbb{Q}}
\def\Z{\mathbb{Z}}
\def\N{\mathbb{N}}
\def\F{\mathbb{F}}
\newtheorem{defi}{Définition}[subsection]
\newtheorem{theo}[defi]{Théorème}
\newtheorem{prop}[defi]{Proposition}
\newtheorem{propdef}[defi]{Proposition-Définition}
\newtheorem{lem}[defi]{Lemme}
\newtheorem{cor}[defi]{Corollaire}
\title[Calcul d'opérateurs de Hecke]{Calcul des opérateurs de Hecke sur les classes d'isomorphisme de réseaux pairs de déterminant $2$ en dimension $23$ et $25$.}
\author{Thomas Mégarbané}
\address{CMLS, {\'E}cole polytechnique, CNRS, Universit{\'e} Paris-Saclay, 91128 Palaiseau Cedex, France}
\email{thomas.megarbane@polytechnique.edu}
\begin{document}
\maketitle
\begin{abstract}
Dans cet article, nous calculons l'opérateur de Hecke $\mathrm{T}_2$ associé aux $2$-voisins de Kneser défini sur les classes d'isomorphisme des réseaux pairs de déterminant $2$ en dimension $23$ et $25$. Grâce aux résultats de \cite{Meg}, on en déduit l'expression de nombreux autres opérateurs de Hecke. Ceci nous permet de déterminer pour tout $p$ premier le graphe de Kneser associé aux $p$-voisins des réseaux de dimension $23$ ou $25$. Nos résultats permettent aussi d'améliorer la Conjecture de Harder, et de démontrer de nombreuses autres congruences faisant intervenir les paramètres de Satake des représentations automorphes des groupes linéaires découvertes par Chenevier et Renard. 
\end{abstract}
\section{Introduction.}
Fixons $n\equiv 0,\pm 1\ \mathrm{mod}\ 8$ un entier strictement positif, et considérons un espace euclidien $V$ de dimension $n$. On définit l'ensemble $\mathcal{L}_n$ des réseaux pairs $L\subset V$ tels que $\mathrm{det}(L)=1$ si $n$ est pair, et $\mathrm{det}(L)=2$ sinon. L'ensemble $\mathcal{L}_n$ est muni d'une action du groupe orthogonal euclidien $\mathrm{O}(V)\simeq \mathrm{O}_n(\R)$, et on note $X_n=\mathrm{O}(V) \setminus \mathcal{L}_n$.\\

Suivant Kneser, si l'on se donne $A$ un groupe abélien fini, on dit que les réseaux $L_1,L_2\in \mathcal{L}_n$ sont des $A$-voisins si :
$$L_1/(L_1\cap L_2)\simeq L_2/(L_1\cap L_2)\simeq A.$$

On parle plus simplement des $d$-voisins lorsque $A=\Z/d\Z$ : c'est le cas qui nous intéresse le plus. Une fois un réseau $L\in \mathcal{L}_n$ donné, il est facile de construire tous ses $d$-voisins, comme rappelé à la proposition \ref{dvoisin}.

Cette notion de $A$-voisin, et plus particulièrement celle de $p$-voisins (pour $p$ un nombre premier), nous permet de définir à $n$ fixé un endomorphisme $\mathrm{T}_p$ sur le $\Z$-module libre $\Z[X_n]$ engendré par $X_n$. On le définit par $\mathrm{T}_p(\overline{L})=\sum \overline{L'}$, la somme portant sur les $p$-voisins $L'$ de $L$, et $\overline{L}$ (respectivement $\overline{L'}$) désignant la classe dans $X_n$ de $L$ (respectivement $L'$).\\

L'étude de l'endomorphisme $\mathrm{T}_p\in \mathrm{End}(\Z[X_n])$ passe par la compréhension de l'ensemble $X_n$.
 
Lorsque $n\leq 9$, on sait d'après Mordell (pour $n=8$) et par exemple d'après Conway-Sloane \cite{CS99} (pour $n\in \{1,7,9\}$) que $\vert X_n \vert =1$, et l'opérateur $\mathrm{T}_p$ n'est pas très pertinent.

Lorsque $n\in \{15,16,17\}$, les ensembles $X_n$ ont été déterminés par Witt (pour $n=16$) et Conway-Sloane (pour $n=15,17$). Suivant les résultats de Chenevier-Lannes \cite{CL}, l'opérateur $\mathrm{T}_p$ se déduit de l'étude des formes modulaires paraboliques pour $\mathrm{SL}_2(\Z)$. La connaissance de $\mathrm{T}_p$ est équivalente à la donnée, pour tous $L,L'\in \mathcal{L}_n$, du nombre de $p$-voisins de $L$ isomorphes à $L'$. Ces quantités font intervenir des polynômes en $p$ ainsi que le $p$-ème terme du $q$-développement des formes modulaires normalisées paraboliques pour $\mathrm{SL}_2(\Z)$ de poids $12$ ou de poids $16$. Pour une étude détaillée, nous renvoyons à \cite[ch. I, Théorème A]{CL} lorsque $n=16$, et à \cite[Annexe B, \S 5]{CL} lorsque $n=15,17$.

Lorsque $n\in \{ 23,24,25\}$, la classification des éléments de $X_n$ est le produit des travaux de Niemeier (pour $n=24$, ce qui donne aussi la classification pour $n=23$) et de Borcherds (pour $n=25$). On prendra bien garde au fait que $\vert X_{23} \vert =32$, $\vert X_{24} \vert =24$ et $\vert X_{25} \vert =121$, et il est facile de se tromper sur les indices qui interviennent dans la suite.

Si les ensembles $X_{23},X_{24}$ et $X_{25}
$ sont plus ou moins bien connus, il n'y a que pour $n=24$ que des opérateurs $\mathrm{T}_p\in \mathrm{End}(\Z[X_n])$ ont été déterminés. Le calcul de l'opérateur $\mathrm{T}_2$ sur $\Z[X_{24}]$ résulte des travaux de Borcherds \cite{Bor84} \cite{CS99}, repris ensuite par Nebe-Venkov dans \cite{NV}. L'étude faite par Chenevier-Lannes dans \cite{CL} repose sur la codiagonalisation des opérateurs $\mathrm{T}_p\in\mathrm{End}(\Z[X_{24}])$, et permet d'en déduire pour $p\leq 113$ l'opérateur $\mathrm{T}_p$ sur $\Z[X_{24}]$. Ils utilisent pour cela que les valeurs propres de l'opérateur $\mathrm{T}_2$ sont toutes distinctes, et la diagonalisation de $\mathrm{T}_2$ fournit une base de codiagonalisation pour tous les $\mathrm{T}_p$.

Le premier but de notre travail est de déterminer un maximum d'opérateurs $\mathrm{T}_p$ pour $n=23$ et $n=25$.\\

Notre point de départ est la détermination de l'opérateur $\mathrm{T}_2$ lorsque $n=23$ et $n=25$, ce qui fait l'objet du paragraphe \ref{3}.

Au paragraphe \ref{3.1}, on étudie les ensembles $X_{23}$ et $X_{25}$. On étudie le rôle fondamental que jouent les systèmes de racines des réseaux de $\mathcal{L}_{23}$ et $\mathcal{L}_{25}$ dans la compréhension de $\mathrm{X}_{23}$ et $X_{25}$, détaillé à la proposition \ref{RXn}, et certainement déjà connu de Borcherds : si $n=23$ ou $25$, deux réseaux $L_1,L_2\in \mathcal{L}_n$ sont isomorphes si, et seulement si, leurs systèmes de racines $R(L_1),R(L_2)$ sont isomorphes. On possède un résultat analogue lorsque $n=24$, qui se déduit des travaux de Niemeier \cite{Nie} et Venkov \cite{Ven}.

Au paragraphe \ref{3.2}, on explique comment déterminer la classe d'un réseau $L'\in \mathcal{L}_n$ dans $X_n$, où les données sont les suivantes : on possède un réseau $L\in \mathcal{L}_n$ (défini par une $\Z$-base), et $L'$ est un $2$-voisin de $L$ (déterminé suivant la construction de \cite[Annexe B, propositions 3.3 et 3.4]{CL} par un vecteur isotrope non-nul de $L/2L$). L'algorithme présenté dans ce paragraphe nous rend une $\Z$-base de $L'$, ainsi que sa classe d'isomorphisme dans $X_n$. 

Au paragraphe \ref{3.3}, on détaille l'algorithme permettant de calculer $\mathrm{T}_2$ sur $X_{23}$ et $X_{25}$. Il se déduit directement des paragraphes précédents : il suffit de parcourir, une fois donnés des réseaux $L_1,\dots,L_k\in \mathcal{L}_n$ d'images distinctes dans $X_n$ (avec $\vert X_n \vert =k$) tous leurs $2$-voisins, et d'en déterminer les classes d'isomorphisme. La connexité du graphe de Kneser $\mathrm{K}_n(2)$ facilite grandement notre tâche. Il suffit de considérer un élément quelconque de $\mathcal{L}_n$, et de parcourir ses $2$-voisins. En réitérant ce procédé aux $2$-voisins des réseaux ainsi construits, on arrive à parcourir tous les éléments de $X_n$. On construit ainsi une famille $(L_1,\dots,L_k)$ satisfaisant les conditions ci-dessus, à l'aide uniquement de la donnée d'un élément de $\mathcal{L}_n$ quelconque.

Au final, nous obtenons les matrices de $\mathrm{T}_2$ sur $\Z[X_{23}]$ et $\Z[X_{25}]$, exprimées dans les base de $\Z[X_{23}]$ et $\Z[X_{25}]$ correspondant respectivement à la numérotation des tables \ref{rX23} et \ref{rX25}. Ces matrices sont données dans \cite{MegTp}. Notons au passage que cette même méthode permettrait aussi de recalculer la matrice de $\mathrm{T}_2$ sur $\Z[X_{24}]$.\\

\`A n fixé, la codiagonalisation sur $\C$ des opérateurs de Hecke (et donc de leurs matrices associées) permet de décrire les matrices des opérateurs $\mathrm{T}_p$ pour $p$ suffisamment petit, ce que l'on présente au paragraphe \ref{4.1}.

La méthode qu'on utilise est la même que celle déjà utilisée par Chenevier-Lannes \cite{CL} en dimension $24$. L'opérateur $\mathrm{T}_2$ a ses valeurs propres deux à deux distinctes, et est connu explicitement. On possède ainsi une base de diagonalisation de $\mathrm{T}_2$, qui est aussi une base de codiagonalisation pour tous les opérateurs $\mathrm{T}_p$, vus comme des endomorphismes de $\C[X_n]$. Il suffit ensuite d'exprimer les valeurs propres associées à cette base de diagonalisation, ce qui se déduit des résultats de  Chenevier-Lannes \cite[Table C.7]{CL} et de Chenevier-Renard \cite[Appendix D]{CR}, et que l'on détaille aux propositions \ref{lambdai} et \ref{mui}.

Suivant les notations de \cite{CR} ou \cite{CL}, notons $\Pi_{\mathrm{alg}}^\bot (\mathrm{PGL}_m)$ l'ensemble des classes d'isomorphisme de représentations automorphes cuspidales autoduales de $\mathrm{GL}_m$ sur $\Q$, telles que $\pi_p$ est non ramifiée pour tout $p$, et que $\pi_\infty$ est algébrique régulière. Alors les valeurs propres de l'opérateur $\mathrm{T}_p$ sur $\Z[X_n]$ s'expriment grâce à la trace du $p$-ème paramètre de Satake d'éléments de $\Pi_{\mathrm{alg}}^\bot(\mathrm{PGL}_m)$, avec $m\in \{2,3,4\}$ pour $n=23$ et $m\in \{2,3,4,6 \}$ pour $n=25$. Pour $m=2$ ou $3$, ces quantités sont bien connues pour tout $p$ premier, et se déduisent des coefficients du $q$-développement des formes modulaires pour $\mathrm{SL}_2(\Z)$ de poids $\leq 23$. Pour $m=4$ et $n=23$, ces quantités sont bien connues pour $p\leq 113$, puisqu'elles s'expriment grâce aux coefficients $\tau_{j,k}(p)$ qui ont été calculés par Chenevier-Lannes \cite{CL}. Enfin, pour $m=4$ ou $6$, et $n=25$, ces quantités ont été calculées dans \cite{Meg} pour $p\leq 67$.

Nos résultats permettent ainsi d'expliciter de nombreux opérateurs $\mathrm{T}_p$ sur $\Z[X_n]$, et on a le théorème suivant :

\begin{theo} Pour $n=23$ et $p\leq 113$, ou $n=25$ et $p\leq 67$, l'endomorphisme $\mathrm{T}_p\in \mathrm{End}(\Z[X_n])$ est donné dans \cite{MegTp}.
\end{theo}

Une première application de nos résultats est la détermination, pour $n=23$ et $p\leq 113$, ou $n=25$ et $p\leq 67$, du graphe $\mathrm{K}_n(p)$, défini au paragraphe \ref{2.2}. Rappelons que le graphe de Kneser $\mathrm{K}_n(p)$ est défini comme le graphe dont les sommets sont les éléments de $X_n$, et dont les arêtes sont les $\{ \overline{L}_1,\overline{L}_2 \}$ pour $L_1,L_2\in \mathcal{L}_n$ des $p$-voisins. Au paragraphe \ref{4.1}, on démontre que l'on a le théorème suivant :

\begin{theo} Soit $p$ un nombre premier :

\begin{enumerate}
\item[$(i)$] Le graphe $\mathrm{K}_{23}(p)$ est complet si, et seulement si, $p\geq 23$.
\item[$(ii)$] Le graphe $\mathrm{K}_{25}(p)$ est complet si, et seulement si, $p\geq 67$.
\end{enumerate}
\end{theo}

Ainsi, nos résultats permettent de déterminer pour tout $p$ premier les graphes $\mathrm{K}_{23}(p)$ et $\mathrm{K}_{25}(p)$.\\

Une deuxième application de nos résultats provient de l'étude de la base de codiagonalisation des opérateurs de Hecke trouvée grâce aux vecteurs propres de $\mathrm{T}_2$, ce qui fait l'objet du paragraphe \ref{4.3}.

Une telle étude permet dans un premier temps de redémontrer la ``Conjecture de Harder" \cite{Har}, déjà démontrée dans \cite[Introduction, Théorème I]{CL} par une étude des opérateurs $\mathrm{T}_p$ sur $\Z[X_{24}]$. Mieux : on l'améliore ici sous la forme du théorème suivant, où les notations sont celles du paragraphe \ref{2.3} :
\begin{theo} Pour tout nombre premier $p$, on a la congruence :
\[D_{21,5}(p)\equiv D_{21}(p)+p^{13}+p^8\mathrm{\ mod\ }9840.\]

De plus, cette congruence est optimale, dans le sens où on ne peut pas remplacer $9840$ par un de ses multiples : on a $D_{21}(p)+p^{13}+p^8-D_{21,5}(p)= 9840$ pour $p=2$.
\end{theo}

Suivant la même méthode, on démontre de nombreuses autres congruences qui sont présentées en détail au paragraphe \ref{4.3}. De même que pour la congruence précédente, certaines des congruences exposées avaient déjà été démontrées dans \cite{CL}. La démonstration qu'on en fait ici est plus facile pour la raison suivante. Dans \cite{CL}, l'étude des valeurs propres de $\mathrm{T}_p$ permettait d'obtenir des ``multiplications par $(p+1)$" des congruences cherchées, et le fait de ``diviser par $(p+1)$" pose problème lorsque $(p+1)$ n'est pas premier au module de la congruence. Ici, on obtient directement les congruences cherchées, ou des ``multiplications par $p$" de ces congruences, et le fait de ``diviser par $p$" est beaucoup plus facile (car il suffit d'évaluer la congruence pour $p$ divisant le module de la congruence).

Au final, nous démontrons également le théorème suivant :

\begin{theo} Pour tout nombre premier $p$, les congruences suivantes sont vérifiées :

\begin{enumerate}
\item[$(i)$] $D_{19,7}(p) \equiv D_{19}(p)+p^6+p^{13}\mathrm{\ mod\ }8712$ ;
\item[$(ii)$] $D_{21,5}(p) \equiv D_{21}(p)+p^8+p^{13}\mathrm{\ mod\ }9840$ ;
\item[$(iii)$] $D_{21,9}(p) \equiv (1+p^6)\,D_{15}(p)\mathrm{\ mod\ }12696$ ;
\item[$(iv)$] $D_{21,9}(p) \equiv D_{21}(p)+p^6+p^{15}\mathrm{\ mod\ }31200$ ;
\item[$(v)$] $D_{21,13}(p) \equiv (1+p^4)\, D_{17}(p)\mathrm{\ mod\ }8736$ ;
\item[$(vi)$] $D_{21,13}(p) \equiv D_{21}(p)+p^4+p^{17}\mathrm{\ mod\ }10920$ ;
\item[$(vii)$] $D_{23,7}(p)\equiv (1+p^8)\, D_{15}(p)\mathrm{\ mod\ }8972$ ;
\item[$(viii)$] $D_{23,13,5}(p)\equiv D_{23,13}(p)+p^9+p^{14}\mathrm{\ mod\ }5472$ ;
\item[$(ix)$] $D_{23,15,7}(p)\equiv (1+p^4)\, D_{19}(p)+p^8+p^{15}\mathrm{\ mod\ }2184$ ;
\item[$(x)$] $D_{23,15,7}(p)\equiv D_{23,7}(p)+p^4\, D_{15}(p)\mathrm{\ mod\ }5856$ ;
\item[$(xi)$] $D_{23,17,9}(p)\equiv D_{23,9}(p)+p^3\, D_{17}(p)\mathrm{\ mod\ }2976$ ;
\item[$(xii)$] $D_{23,19,3}(p)\equiv (1+p^2)\, D_{21}(p)+p^{10}+p^{13}\mathrm{\ mod\ }7872$ ;
\item[$(xiii)$] $D_{23,19,11}(p)\equiv (1+p^2)\, D_{21}(p)+p^6+p^{17}\mathrm{\ mod\ }16224$.
\end{enumerate}

De plus, mis à part les points $(vi),(vii),(xi)$ et $(xiii)$, les congruences ci-dessus sont optimales, dans le sens où le module qui intervient ne peut pas être remplacé par un de ses multiples.
\end{theo}

Notons au passage que la congruence $(viii)$ avait déjà été conjecturée dans \cite[\S 6, Example 3]{BDM}. On la démontre sous la forme d'un résultat plus fort que dans \cite{BDM}, et on vérifie que ce résultat est optimal. \\

Enfin, nos résultats valident dans certains cas particuliers une conjecture de Gan-Gross-Prasad, exposée en conclusion de \cite[Classical groups, the local case]{GGP}. Cette dernière stipule que les paramètres standards des représentations $\pi,\pi'$ de $\mathrm{SO}_{m},\mathrm{SO}_{m-1}$ déterminent entièrement si, et seulement si, $\pi'$ est une restriction de $\pi$. On donne plus en détail au paragraphe \ref{4.4} ce critère sur les paramètres standards de $\pi$ et $\pi'$.

Nos résultats permettent de déterminer, lorsque $\pi\in\Pi_{\mathrm{disc}}(\mathrm{O}_{24})$ et $\pi'\in \Pi_{\mathrm{cusp}}(\mathrm{SO}_{23})$, ou lorsque $\pi'\in \Pi_{\mathrm{cusp}}(\mathrm{SO}_{25})$ et $\pi\in\Pi_{\mathrm{disc}}(\mathrm{O}_{24})$, si $\pi'$ est une restriction de $\pi$. Les paramètres standards de telles représentations ont été déterminés dans \cite[Table C.7]{CL} pour les éléments de $\Pi_{\mathrm{cusp}}(\mathrm{SO}_{23})$, \cite[Table C.5]{CL} pour les éléments de $\Pi_{\mathrm{disc}}(\mathrm{O}_{24})$ et \cite[Appendix D]{CR} pour les éléments de $\Pi_{\mathrm{cusp}}(\mathrm{SO}_{25})$.

Prasad et Chenevier avaient réalisé une inspection des paramètres standards des éléments de $\Pi_{\mathrm{cusp}}(\mathrm{SO}_{23})$, $\Pi_{\mathrm{disc}}(\mathrm{O}_{24})$ et $\Pi_{\mathrm{cusp}}(\mathrm{SO}_{25})$. Ils avaient alors remarqué que, en se donnant $\pi \in \Pi_{\mathrm{disc}}(\mathrm{O}_{24})$ (respectivement $\pi \in \Pi_{\mathrm{cusp}}(\mathrm{SO}_{25})$), le sous-ensemble $\Pi' \subset \Pi_{\mathrm{cusp}}(\mathrm{SO}_{23})$ (respectivement $\Pi' \subset \Pi_{\mathrm{disc}}(\mathrm{O}_{24})$) dont les éléments satisfont la conjecture de Gan-Gross-Prasad est non vide. Nos résultats vont dans le sens de cette constatation, puisque l'on a en fait l'égalité $\Pi'=\mathrm{Res}(\pi)$. On en déduit le théorème suivant :

\begin{theo} La conjecture de Gan-Gross-Prasad est bien vérifiée lorsque $\pi\in\Pi_{\mathrm{disc}}(\mathrm{O}_{24})$ et $\pi'\in \Pi_{\mathrm{cusp}}(\mathrm{SO}_{23})$, ou lorsque $\pi'\in \Pi_{\mathrm{cusp}}(\mathrm{SO}_{25})$ et $\pi\in\Pi_{\mathrm{disc}}(\mathrm{O}_{24})$.
\end{theo}

\bigbreak
\bigbreak
\bigbreak
Cet article a été écrit dans le cadre de ma thèse sous la direction de Gaëtan Chenevier, que je remercie pour les discussions utiles que nous avons pu avoir. Je remercie aussi Jean Lannes, qui a montré beaucoup d'intérêt pour mes résultats, et avec qui j'ai pu également beaucoup échanger.
\newpage
\tableofcontents
\newpage
\section{Résultats préliminaires et notations.}\label{2}

Dans toute la suite, on se place dans un espace euclidien $V$ de dimension $n$, muni de son produit scalaire $x\cdot y$, et on note $q:V\rightarrow \R,x\mapsto \frac{x\cdot x}{2}$ la forme quadratique associée. On considèrera souvent le cas où $V=\R^n$, muni de sa structure euclidienne, avec pour base canonique associée $(e_i)_{i\in \{1,\dots,n\}}$. On notera alors $(x_i)\cdot (y_i)=\sum_i x_iy_i$ le produit scalaire usuel.

\subsection{Les réseaux de $\R^n$.}\label{2.1}

\begin{defi}[Réseaux entiers et pairs] Soit $L\subset V$ un réseau. On dit que $L$ est entier si :
$$(\forall x,y\in L)\ x\cdot y \in \Z.$$

Si l'on se donne un réseau $L\subset V$ entier, il est dit pair si :
$$(\forall x\in L)\ x\cdot x\in 2 \Z.$$
\end{defi}

\begin{defi}[Dual et résidu d'un réseau] Soit $L\subset V$ un réseau. On définit $L^\sharp$ le dual de $L$ par :
$$L^\sharp = \{ y\in V \vert (\forall x\in L)\ y\cdot x \in \Z \}.$$

En particulier, $L$ est entier si, et seulement si, $L\subset L^\sharp$. Dans ce cas on définit le résidu de $L$ comme le quotient :
$$\mathrm{r\acute{e}s}\,  L = L^\sharp /L .$$

Ce quotient est muni d'une forme quadratique $\mathrm{r\acute{e}s}\,  L \rightarrow \Q/\Z$ définie par $x\mapsto q(x)\mathrm{\ mod\ }\Z$ appelée forme d'enlacement.
\end{defi}

\begin{defi}[Déterminant d'un réseau] Soit $L$ un réseau entier. On note $\mathrm{det}(L)$ son déterminant, qui est encore le déterminant de la matrice de Gram d'une base quelconque de $L$. On a la relation bien connue :
$$\mathrm{det}(L) = \vert \mathrm{r\acute{e}s}\, L \vert . $$
\end{defi}

\begin{defi}[Racines d'un réseau]
Soit $L\subset V$ un réseau entier. On définit le système de racines de $L$ comme l'ensemble $R(L)$ (qui est fini, et éventuellement vide) :
$$ R(L) = \{x \in L \vert x\cdot x = 2\}. $$

C'est un système de racines du $\R$-espace vectoriel qu'il engendre au sens de \cite[ch. VI, \S 1.1, définition 1]{Bo}, ce qui justifie la terminologie (c'est même un système de racines de type ADE).
\end{defi}

\begin{propdef}[Racines positives et racines simples] Soient $R$ un système de racines de $V$, et $D$ un demi-espace. On suppose que l'hyperplan $H=D\cap (-D)$ ne contient aucun élément de $R$. On définit alors $R^+=D\cap R$ comme l'ensemble des racines positives de $R$ associé à $D$.

L'ensemble $B(R^+)=\{ \alpha\in R^+ \vert \alpha\text{ ne peut pas s'écrire }\alpha=\alpha_1+\alpha_2\text{ pour }\alpha_1,\alpha_2\in R^+\}$ vérifie que tout élément de $R$ est combinaison linéaire à coefficients entiers de même signe de $B(R^+)$. L'ensemble $B(R^+)$ est appelé le système de racines simples de $R$ associé à $R^+$
\end{propdef}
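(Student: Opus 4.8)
\emph{Esquisse de démonstration.} L'approche est la méthode classique (comparer à \cite[ch. VI, \S 1.6]{Bo}) : on traduit la combinatoire de $R^+$ en une récurrence sur la hauteur, à l'aide d'une forme linéaire adaptée au demi-espace $D$.

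Je commencerais par fixer une forme linéaire $f : V \to \R$ telle que $D = \{x \in V \mid f(x) \ge 0\}$, donc $H = \Ker f$. L'hypothèse faite sur $H$ assure que $f$ ne s'annule sur aucune racine, de sorte que $R$ est la réunion disjointe de $R^+$ et de $-R^+$, où $R^+ = \{\alpha \in R \mid f(\alpha) > 0\}$. Comme $R$ est fini, $f(R^+)$ est une partie finie de $\R_{>0}$, ce qui fournit le bon ordre pour mener une récurrence.

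Le cœur de la preuve est alors de montrer, par récurrence sur $f(\alpha)$, que toute racine $\alpha \in R^+$ s'écrit comme combinaison linéaire à coefficients entiers positifs ou nuls d'éléments de $B(R^+)$. Si $\alpha \in B(R^+)$, c'est immédiat ; c'est en particulier le cas lorsque $f(\alpha)$ est minimal dans $f(R^+)$, car $\alpha$ ne peut alors pas s'écrire $\alpha_1 + \alpha_2$ avec $\alpha_1, \alpha_2 \in R^+$, ceux-ci ayant une image par $f$ strictement positive et strictement inférieure à $f(\alpha)$. Sinon, par définition de $B(R^+)$, on peut écrire $\alpha = \alpha_1 + \alpha_2$ avec $\alpha_1, \alpha_2 \in R^+$, et l'égalité $f(\alpha_1) + f(\alpha_2) = f(\alpha)$ jointe à $f(\alpha_1), f(\alpha_2) > 0$ donne $f(\alpha_i) < f(\alpha)$ ; l'hypothèse de récurrence appliquée à $\alpha_1$ et à $\alpha_2$ conclut. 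On en déduit aussitôt l'énoncé pour $R$ tout entier : une racine de $R^+$ s'écrit avec des coefficients tous $\ge 0$, et son opposée avec des coefficients tous $\le 0$.

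Enfin, pour justifier pleinement l'appellation ``système de racines simples'', je montrerais que $B(R^+)$ est libre — donc, avec ce qui précède et le fait que $R$ engendre $V$, une base de $V$. Pour cela, je vérifierais d'abord que deux éléments distincts $\alpha, \beta$ de $B(R^+)$ satisfont $\alpha \cdot \beta \le 0$ : dans le cas contraire, $R$ étant de type ADE, on aurait $\alpha \cdot \beta = 1$, donc $s_\beta(\alpha) = \alpha - \beta \in R$ ; suivant que $\alpha - \beta$ ou $\beta - \alpha$ appartient à $R^+$, on obtiendrait $\alpha = \beta + (\alpha - \beta)$ ou $\beta = \alpha + (\beta - \alpha)$ comme somme de deux éléments de $R^+$, contredisant l'indécomposabilité de $\alpha$ ou de $\beta$. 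Je conclurais par le lemme classique selon lequel une famille de vecteurs non nuls contenus dans un même demi-espace ouvert et d'angles deux à deux obtus ou droits est libre. Le principal obstacle est précisément ce dernier lemme et l'usage du fait que $\alpha \cdot \beta > 0$ force $\alpha - \beta$ ou $\beta - \alpha$ à être une racine (conséquence de la stabilité de $R$ par les symétries $s_\beta$ et du type ADE) ; la récurrence sur la hauteur, elle, ne présente pas de difficulté une fois observée la finitude de $f(R^+)$.
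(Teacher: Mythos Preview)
Ta preuve est correcte. La récurrence sur la hauteur $f(\alpha)$ établit proprement que toute racine positive est combinaison à coefficients entiers positifs des racines simples, et le passage à $-R^+$ conclut pour tout $R$. La partie sur l'indépendance linéaire de $B(R^+)$ est également correcte (l'argument via $\alpha\cdot\beta\le 0$ et le lemme d'obtusité est standard), même si elle va au-delà de ce que l'énoncé demande strictement --- l'énoncé n'affirme que la propriété de décomposition, pas que $B(R^+)$ soit une base.

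La différence avec l'article est simple : celui-ci ne donne pas de preuve et se contente de renvoyer à \cite[ch.~VI, théorème~3]{Bo}. Tu reproduis en substance l'argument qui se trouve derrière cette référence, ce qui rend ta rédaction autonome là où l'article se repose sur Bourbaki. L'approche n'est donc pas différente sur le fond --- c'est la même que celle de Bourbaki --- mais tu l'as déployée explicitement au lieu de la citer.
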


\begin{proof}
Le seul point à vérifier est que tout élément de $R$ est combinaison linéaire à coefficients entiers de même signe de $B(R^+)$, ce qui provient de \cite[ch. VI, théorème 3]{Bo}.
\end{proof}

Les systèmes de racines de réseaux pairs sont toujours isomorphes à des unions disjointes des systèmes de racines des réseaux $\mathrm{A}_n,\mathrm{D}_n,\mathrm{E}_8,\mathrm{E}_7,\mathrm{E}_6$ que l'on décrit ci-dessous.
\begin{enumerate}
\item[$\mathrm{A}_n$] : On pose $\mathrm{A}_n = \{ (x_i)\in \Z^{n+1} \vert \sum_i x_i=0\}$. On a $\mathbf{A}_n=R(\mathrm{A}_n)=\{ \pm (e_i-e_j) \vert i\neq j\}$.
\item[$\mathrm{D}_n$] : On pose $\mathrm{D}_n = \{ (x_i)\in \Z^n \vert \sum_i x_i\equiv 0\mathrm{\ mod\ }2\}$. On a $\mathbf{D}_n=R(\mathrm{D}_n)=\{ \pm e_i \pm e_j \vert i\neq j\}$. 
\item[$\mathrm{E}_8$] : On pose $\mathrm{E}_8 = \mathrm{D}_8+ \Z \cdot e$, avec $e=\frac{1}{2}(1,\dots ,1)$. On a $\mathbf{E}_8=R(\mathrm{E}_8)=R(\mathrm{D}_8) \cup \left\{ (x_i)=\frac{1}{2} (\pm 1, \dots, \pm 1) \vert \prod _i x_i >0 \right\}$. 
\item[$\mathrm{E}_7$] : On pose $\mathrm{E}_7 = e^\perp \cap \mathrm{E}_8 =\left\{ (x_i)\in \mathrm{E}_8 \vert \sum_i x_i =0 \right\}$. On a $\mathbf{E}_7=R(\mathrm{E}_7)=e^\perp \cap R(\mathrm{E}_8) =  R(A_7)\cup \left\{ (x_i)=\frac{1}{2} (\pm 1, \dots, \pm 1) \vert \sum _i x_i =0 \right\} $. 
\item[$\mathrm{E}_6$] : On pose $\mathrm{E}_6=(e_7+e_8)^\perp \cap \mathrm{E}_7$. On a $\mathbf{E}_6=R(\mathrm{E}_6)=(e_7+e_8)^\perp \cap R(\mathrm{E}_7)$.
\end{enumerate} 

Suivant ces notations, on a les isomorphismes : $\mathbf{D}_1\simeq \mathbf{A}_1$, $\mathbf{D}_2\simeq (\mathbf{A}_1)^2$ et $\mathbf{D}_3\simeq \mathbf{A}_3$, donc on n'utilisera la notation $\mathbf{D}_n$ que pour $n\geq 4$.

De plus, les systèmes de racines $\mathbf{A}_n$ ($n\geq 1$), $\mathbf{D}_n$ ($n\geq 4$), $\mathbf{E}_8$, $\mathbf{E}_7$ et $\mathbf{E}_6$ sont deux-à-deux non isomorphes, et ce sont (à isomorphisme près) les seuls systèmes de racines irréductibles de type ADE (au sens de \cite[ch. VI, \S 1]{Bo}).

\begin{defi}[Les ensembles $\mathcal{L}_n$ et $X_n$] Soit $n\equiv 0,\pm 1\ \mathrm{mod}\ 8$. On définit $\mathcal{L}_n$ comme l'ensemble des réseaux pairs $L\subset V$ tels que $\mathrm{det}(L)=1$ si $n$ est pair et $\mathrm{det}(L)=2$ sinon.

\`A $n$ fixé, le groupe orthogonal euclidien $\mathrm{O}_n(\R)$ agit naturellement sur l'ensemble $\mathcal{L}_n$, et on note $X_n$ l'ensemble des classes d'isomorphisme des éléments de $\mathcal{L}_n$, qui est un ensemble fini.
\end{defi}

On rappelle que $\mathcal{L}_n$ est non vide pour $n\equiv 0,\pm 1\mathrm{\ mod\ }8$. Par exemple, suivant les notations précédentes, $\mathcal{L}_n$ contient :

- le réseau $\mathrm{E}_8^{(n-7)/8}\oplus \mathrm{E}_7$ si $n\equiv -1\mathrm{\ mod\ }8$ ;

- le réseau $\mathrm{E}_8^{n/8}$ si $n\equiv 0\mathrm{\ mod\ }8$ ;

- le réseau $\mathrm{E}_8^{(n-1)/8}\oplus \mathrm{A}_1$ si $n\equiv 1\mathrm{\ mod\ }8$.

\begin{lem} Soient $L\subset V$ un réseau pair, et $R=R(L)$ son système de racines. Si l'on possède $R^+\subset R$ un système de racines positives, et que l'on note $\rho=\frac{1}{2}\sum_{\alpha\in R^+} \alpha$, alors le système de racines simples associé à $R^+$ est donné par : $\{ \alpha\in R^+ \, \vert\, \alpha\cdot \rho =1\}.$
\end{lem}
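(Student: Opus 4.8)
L'idée directrice est d'identifier, pour toute racine $\alpha\in R^+$, le produit scalaire $\alpha\cdot\rho$ à la \emph{hauteur} de $\alpha$ relativement au système de racines simples $B=B(R^+)$ fourni par la Proposition-Définition précédente, c'est-à-dire à la somme $\sum_{\beta\in B} c_\beta$ des coefficients de l'unique écriture $\alpha=\sum_{\beta\in B} c_\beta\,\beta$ (les $c_\beta$ étant des entiers $\ge 0$). Comme ces coefficients sont positifs et de somme $\ge 1$, avec égalité si et seulement si exactement un $c_\beta$ vaut $1$ et les autres sont nuls, c'est-à-dire si et seulement si $\alpha\in B$, ceci donnera exactement l'énoncé.

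\emph{Première étape : le cas des racines simples.} Je fixerais $\alpha_i\in B$ et considérerais la réflexion orthogonale $s_{\alpha_i}$ associée. On sait (\cite[ch.~VI, \S 1.6]{Bo}) que $s_{\alpha_i}$ échange $\alpha_i$ et $-\alpha_i$ tout en permutant $R^+\setminus\{\alpha_i\}$ ; il en résulte que $s_{\alpha_i}\bigl(\sum_{\alpha\in R^+}\alpha\bigr)=\sum_{\alpha\in R^+}\alpha-2\alpha_i$, autrement dit $s_{\alpha_i}(\rho)=\rho-\alpha_i$. D'autre part, $L$ étant pair, toute racine vérifie $\alpha_i\cdot\alpha_i=2$, de sorte que la formule usuelle de la réflexion donne $s_{\alpha_i}(\rho)=\rho-(\rho\cdot\alpha_i)\,\alpha_i$. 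En comparant les deux expressions, on obtient $\rho\cdot\alpha_i=1$.

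\emph{Seconde étape : le cas général.} Pour $\alpha\in R^+$ quelconque, j'écrirais $\alpha=\sum_{\beta\in B} c_\beta\,\beta$ avec $c_\beta\in\Z_{\ge 0}$, puis utiliserais la première étape : $\alpha\cdot\rho=\sum_{\beta\in B} c_\beta\,(\beta\cdot\rho)=\sum_{\beta\in B} c_\beta$. D'après la remarque du début, cette quantité vaut $1$ exactement lorsque $\alpha\in B$, ce qui conclut.

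Le seul point non totalement élémentaire est l'affirmation combinatoire utilisée à la première étape --- à savoir que $s_{\alpha_i}$ stabilise $R^+\setminus\{\alpha_i\}$ --- mais c'est un fait classique de la théorie des systèmes de racines que l'on peut invoquer directement ; tout le reste se réduit à un calcul immédiat.
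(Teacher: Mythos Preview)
Ta démonstration est correcte et suit exactement l'approche standard : c'est précisément le contenu de \cite[ch.~VI, \S 1.10, proposition~29]{Bo} que l'article se contente de citer, et que tu déplies proprement (première étape : $s_{\alpha_i}(\rho)=\rho-\alpha_i$ via la permutation de $R^+\setminus\{\alpha_i\}$, d'où $\rho\cdot\alpha_i=1$ dans le cas simplement lacé ; seconde étape : $\rho\cdot\alpha$ est la hauteur de $\alpha$). Rien à redire.
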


\begin{proof}
Découle directement de \cite[ch.VI, proposition 29]{Bo}.
\end{proof}

\begin{lem}\label{Rr}Soient $R$ un système de racine irréductible de type ADE, et $r\in R$. Alors $R\cap r^\perp$ est un système de racine (éventuellement vide) donné par la table suivante :
\begin{center} \begin{tabular}{|c|c|}
\hline
$R$ & $R\cap r^\perp$ \\
\hline
$\mathbf{A}_n$ ($n\leq 2$) & $\emptyset$ \\
$\mathbf{A}_n$ ($n\geq 3$) & $\mathbf{A}_{n-2}$ \\
$\mathbf{D}_n$ ($n\geq 4$) & $\mathbf{D}_{n-2}\coprod \mathbf{A}_1$ \\
$\mathbf{E}_8$ & $\mathbf{E}_{7}$ \\
$\mathbf{E}_7$ & $\mathbf{D}_{6}$ \\
$\mathbf{E}_6$ & $\mathbf{A}_{5}$ \\
\hline
\end{tabular}
\end{center}
\end{lem}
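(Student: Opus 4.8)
The plan is to reduce, via Weyl-group transitivity, to one convenient root $r$ in each irreducible type and then to read $R\cap r^\perp$ off the explicit models of the excerpt. First note that $R\cap r^\perp$ is automatically a root system in the subspace it spans: it is finite and stable under $v\mapsto -v$, and if $\alpha,\beta\in R\cap r^\perp$ then $s_\alpha(\beta)=\beta-(\beta\cdot\alpha)\alpha$ lies in $R$ (as $R$ is a root system) and, being a linear combination of $\alpha$ and $\beta$, in $r^\perp$; the integrality of $\beta\cdot\alpha$ is inherited from $R$, and a subsystem of an ADE system is again ADE. Since the Weyl group of an irreducible ADE system acts transitively on its roots (all roots have the same length), the isomorphism class of $R\cap r^\perp$ does not depend on $r\in R$, so it suffices to treat a single root in each of $\mathbf{A}_n$, $\mathbf{D}_n$, $\mathbf{E}_6$, $\mathbf{E}_7$, $\mathbf{E}_8$.

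For $R=\mathbf{A}_n\subset\Z^{n+1}$ take $r=e_1-e_2$: a root $\pm(e_i-e_j)$ is orthogonal to $r$ exactly when $\{i,j\}\cap\{1,2\}=\emptyset$, so $R\cap r^\perp$ is the copy of $\mathbf{A}_{n-2}$ carried by $e_3,\dots,e_{n+1}$, and it is empty precisely when $n\le 2$. For $R=\mathbf{D}_n\subset\Z^n$ with $n\ge 4$, again take $r=e_1-e_2$: among the roots $\pm e_i\pm e_j$, those with $\{i,j\}\cap\{1,2\}=\emptyset$ form a $\mathbf{D}_{n-2}$ on $e_3,\dots,e_n$, those supported on $\{1,2\}$ reduce to $\pm(e_1+e_2)$ (a copy of $\mathbf{A}_1$), and no root meeting $\{1,2\}$ in exactly one index is orthogonal to $r$; hence $R\cap r^\perp\simeq\mathbf{D}_{n-2}\coprod\mathbf{A}_1$. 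The case $R=\mathbf{E}_8$ is immediate from the excerpt's own definition of $\mathbf{E}_7$: the vector $e=\tfrac12(1,\dots,1)$ is a root of $\mathbf{E}_8$, and $R(\mathbf{E}_8)\cap e^\perp=\mathbf{E}_7$ by construction.

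It remains to handle $\mathbf{E}_7$ and $\mathbf{E}_6$, which is the only step needing care; I would carry it out by a direct count in the coordinates of the excerpt. For $\mathbf{E}_7$, take $r=e_1-e_2$: the roots $\pm(e_i-e_j)$ orthogonal to $r$ are those with $3\le i<j\le 8$, and the half-integral roots $\tfrac12(\pm1,\dots,\pm1)$ of $\mathbf{E}_7$ orthogonal to $r$ are exactly those whose first two coordinates agree, giving $30+30=60$ roots. Now $R\cap r^\perp$ is contained in the orthogonal complement of $r$ inside the $7$-dimensional span of $\mathbf{E}_7$, hence has rank $\le 6$; and the only ADE system of rank $\le 6$ with $60$ roots is $\mathbf{D}_6$ (an ADE system of rank $\le 5$ has at most $40$ roots), so $R\cap r^\perp\simeq\mathbf{D}_6$. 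The identical bookkeeping for $\mathbf{E}_6$ — with $r$ a root such as $e_1-e_2$ — gives $30$ roots inside a space of dimension $\le 5$, hence $\mathbf{A}_5$, the unique ADE system of rank $\le 5$ with $30$ roots. (Alternatively, the four $\mathbf{D},\mathbf{E}$ rows follow uniformly from Borel--de Siebenthal: taking $r$ to be a highest root $\theta$, one checks that $\{\pm\theta\}\cup(R\cap\theta^\perp)$ is the maximal-rank subsystem of $R$ obtained from the completed Dynkin diagram $\widetilde R$ by deleting the node(s) adjacent to the affine node, which reads off as $\mathbf{A}_1+\mathbf{E}_7$, $\mathbf{A}_1+\mathbf{A}_1+\mathbf{D}_{n-2}$, $\mathbf{A}_1+\mathbf{D}_6$ and $\mathbf{A}_1+\mathbf{A}_5$.) The only genuine obstacle is thus this identification of the two exceptional intersections, settled either by the root count together with the ADE classification recalled just above, or by the completed-Dynkin-diagram description.
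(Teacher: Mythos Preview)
Your argument is correct and follows the same strategy as the paper: use Weyl-group transitivity on roots to reduce to a single $r$ in each type, then verify the table in the explicit coordinate models given earlier. The paper's own proof is a two-line sketch (``on le fait facilement à la main''); you have simply carried out that hand-check in detail, with the minor variation that for $\mathbf{E}_7$ and $\mathbf{E}_6$ you identify $R\cap r^\perp$ by a root count together with the ADE classification (or alternatively via the extended Dynkin diagram) rather than by writing down an explicit isomorphism --- a perfectly legitimate and arguably cleaner way to finish.
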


\begin{proof}
Le groupe de Weyl de $R$ agit transitivement sur l'ensemble des éléments de $R$. La classe d'isomorphisme de $R\cap r^\perp$ ne dépend donc uniquement de la classe d'isomorphisme de $R$, et non de la racine $r$ choisie. Il suffit donc de vérifier le tableau pour les réseaux $\mathbf{A}_n,\mathbf{D}_n,\mathbf{E}_8,\mathbf{E}_7$ et $\mathbf{E}_6$ décrits précédemment, en prenant une racine quelconque $r\in R$, ce que l'on fait facilement à la main.
\end{proof}

\begin{lem} Soit $n\geq 1$ et $L\subset V$ un réseau pair. Si l'on se donne $r\in R(L)$, alors $L'=L\cap r^\perp$ est un sous-$\Z$-module de rang $n-1$ de $L$, et c'est un réseau pair de l'espace $V\cap r^\perp$. De plus, le système de racines de $L'$ est donné par : $R(L') = R(L) \cap r^\perp$. En particulier, la classe d'isomorphisme de $R(L')$ ne dépend que de $R(L)$ et de la composante irréductible de $R(L)$ contenant $r$, et elle se déduit du lemme \ref{Rr}.
\end{lem}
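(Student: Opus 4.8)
Le plan est de vérifier successivement les quatre assertions, dont aucune n'est profonde : tout le contenu non trivial figure déjà dans le lemme \ref{Rr}. L'obstacle principal — s'il y en a un — sera le tout dernier point, où il faut s'assurer que toute composante irréductible de $R(L)$ distincte de celle contenant $r$ est \emph{entièrement} incluse dans $r^\perp$ : c'est précisément ce fait qui rend la réponse indépendante du choix de $r$ dans sa composante.

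Je commencerais par le rang. L'ensemble $L'=L\cap r^\perp$ est visiblement un sous-$\Z$-module de $L$. Pour en calculer le rang, je considérerais l'application $\Z$-linéaire $\varphi:L\to\Z$, $x\mapsto x\cdot r$, bien définie puisque $L$ est entier, et de noyau $L'$. Comme $\varphi(r)=r\cdot r=2\neq 0$, l'image de $\varphi$ est un sous-groupe non nul de $\Z$, donc de rang $1$ ; puisque $L'$ est libre (sous-module du $\Z$-module libre $L$) et que le rang est additif dans la suite exacte $0\to L'\to L\to\im\varphi\to 0$, on obtient que $L'$ est libre de rang $n-1$. De plus $L'\otimes_\Z\R$ est contenu dans l'hyperplan $V\cap r^\perp$ de $V$, de dimension $n-1$ ; pour des raisons de dimension on a donc $L'\otimes_\Z\R=V\cap r^\perp$, et $L'$ est un réseau de rang maximal de l'espace euclidien $V\cap r^\perp$. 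Sa parité est immédiate : comme $L'\subset L$, on a $x\cdot y\in\Z$ et $x\cdot x\in 2\Z$ pour tous $x,y\in L'$.

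Pour le système de racines, j'observerais simplement que, par définition, $R(L')=\{x\in L'\ \vert\ x\cdot x=2\}$, et qu'un tel $x$ est exactement un élément de $L$ vérifiant $x\cdot x=2$ et $x\cdot r=0$ ; réciproquement, tout élément de $R(L)$ orthogonal à $r$ appartient à $L\cap r^\perp=L'$. D'où l'égalité $R(L')=R(L)\cap r^\perp$, où $r^\perp$ désigne ici l'hyperplan de $V$ orthogonal à $r$.

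Reste l'énoncé de dépendance. J'écrirais la décomposition $R(L)=\coprod_i R_i$ en composantes irréductibles, deux-à-deux orthogonales et de type ADE d'après le rappel précédant la définition de $\mathcal{L}_n$. La racine $r$ appartient à une unique composante, disons $R_{i_0}$ : si $r$ appartenait aussi à $R_j$ pour un $j\neq i_0$, l'orthogonalité $R_{i_0}\perp R_j$ forcerait $r\cdot r=0$, ce qui est exclu. Le point clé est alors que, pour $j\neq i_0$, on a $R_j\perp R_{i_0}\ni r$, donc $R_j\subset r^\perp$ tout entier ; par conséquent
\[ R(L')=R(L)\cap r^\perp=\bigl(R_{i_0}\cap r^\perp\bigr)\ \coprod\ \coprod_{j\neq i_0}R_j. \]
Or, d'après le lemme \ref{Rr} (dont la preuve repose sur la transitivité du groupe de Weyl sur les racines), $R_{i_0}\cap r^\perp$ ne dépend que de la classe d'isomorphisme de $R_{i_0}$. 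La classe d'isomorphisme de $R(L')$ ne dépend donc que de celle de $R(L)$ et de la composante irréductible contenant $r$, et se lit directement sur le lemme \ref{Rr}, ce qui conclura.
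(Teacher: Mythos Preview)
Votre preuve est correcte et suit essentiellement la même démarche que celle du papier : décomposer $R(L)$ en composantes irréductibles, constater que celles ne contenant pas $r$ sont entièrement dans $r^\perp$, puis invoquer le lemme \ref{Rr} pour la composante restante. Vous êtes simplement plus explicite que le papier sur les points préliminaires (rang de $L'$, parité, égalité $R(L')=R(L)\cap r^\perp$), que celui-ci passe sous silence pour ne traiter que le dernier point.
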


\begin{proof}
On considère la décomposition en composantes irréductibles du système de racines $R(L)$ : 
$$R(L)\simeq \coprod_i R_i$$
où les $R_i$ sont des systèmes de racines irréductibles de type ADE (dont certains peuvent être égaux).

Si l'on se donne $r\in R_j$, alors par définition on a : $\coprod_{i\neq j}R_j \subset r^\perp$. Ainsi, le système de racines de $L'=L\cap r^\perp$ vérifie : $R(L')\simeq \left( \coprod_{i\neq j} R_j \right) \coprod \left( R_j\cap r^\perp \right)$, et la classe d'isomorphisme de $R_j\cap r^\perp$ est donnée par le lemme précédent.
\end{proof}

\subsection{Les opérateurs de Hecke et les $A$-voisins.}\label{2.2}
Commençons par rappeler la définition des $A$-voisins :
\begin{propdef}[Les $A$-voisins] Soient $A$ un groupe abélien fini, et $L_1,L_2$ deux éléments de $\mathcal{L}_n$. Les conditions suivantes sont équivalentes :

\begin{enumerate}
\item[$(i)$] Le quotient $L_1/(L_1\cap L_2)$ est isomorphe à $A$.
\item[$(ii)$] Le quotient $L_2/(L_1\cap L_2)$ est isomorphe à $A$.
\end{enumerate}

Si ces conditions sont vérifiées, on dit que $L_1$ et $L_2$ sont des $A$-voisins, ou que $L_2$ est un $A$-voisin de $L_1$.
\end{propdef}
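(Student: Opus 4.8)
The plan is to show the equivalence of $(i)$ and $(ii)$ by proving that both quotients $L_1/(L_1\cap L_2)$ and $L_2/(L_1\cap L_2)$ are isomorphic to a common group, exploiting the fact that $L_1$ and $L_2$ live in $\mathcal{L}_n$ and hence have the same determinant. Set $M=L_1\cap L_2$ and $N=L_1+L_2$. First I would observe that $M$ and $N$ are both lattices of full rank $n$ in $V$ (their $\Z$-spans coincide with $V$ since already $L_1$ does), so all four of $L_1/M$, $L_2/M$, $N/L_1$, $N/L_2$ are finite abelian groups.

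The key step is the standard isomorphism $L_1/M = L_1/(L_1\cap L_2) \simeq (L_1+L_2)/L_2 = N/L_2$, and symmetrically $L_2/M \simeq N/L_1$; these are instances of the second isomorphism theorem for modules. It therefore suffices to prove $L_1/M \simeq L_2/M$, or equivalently that $[L_1:M]=[L_2:M]$ together with an argument identifying the groups — but in fact a cleaner route is to compare indices using determinants. Since $M\subset L_1$ with finite index, $\mathrm{det}(M)=[L_1:M]^2\,\mathrm{det}(L_1)$, and likewise $\mathrm{det}(M)=[L_2:M]^2\,\mathrm{det}(L_2)$. Because $L_1,L_2\in\mathcal{L}_n$ have the same determinant (both $1$ or both $2$ according to the parity of $n$), we get $[L_1:M]^2=[L_2:M]^2$, hence $[L_1:M]=[L_2:M]$.

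Equality of orders is not by itself enough to conclude the two finite abelian groups are isomorphic, so the final step is to pin down the group structure. Here I would use the pairing: the inclusion $M\subset L_1$ dualizes to $L_1^\sharp\subset M^\sharp$, and the bilinear form identifies $L_1/M$ with the Pontryagin dual of $M^\sharp/L_1^\sharp$, but more directly one notes $L_1/M \simeq N/L_2$ and $L_2/M\simeq N/L_1$ from the second isomorphism theorem as above, so it remains to see $N/L_1\simeq L_1/M$. This follows because the perfect $\Q/\Z$-valued pairing on $M^\sharp/M$ (the enlacement form) restricts to a perfect pairing between the two isotropic-type subquotients $L_1/M$ and $N/L_1$ when $L_1$ is isotropic for it — and one checks $N\subset M^\sharp$ since for $x,y\in N$ one has $x\cdot y\in\Z$ using that $L_1,L_2$ are integral and $M$ has the appropriate index. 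I expect the main obstacle to be exactly this last point: upgrading the numerical equality $[L_1:M]=[L_2:M]$ to an honest isomorphism of groups, which requires the duality between $L_1/M$ and $N/L_1$ inside the discriminant group of $M$ rather than a mere counting argument; once that symmetry is in hand, combining it with the two second-isomorphism-theorem identifications closes the loop and gives $L_1/M\simeq N/L_1\simeq L_2/M$, so $(i)\Leftrightarrow(ii)$.
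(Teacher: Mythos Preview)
Your approach via the second isomorphism theorem and the discriminant pairing is the natural one, and it succeeds cleanly when $n$ is even (the unimodular case). There $L_1=L_1^\sharp$, so $L_1/M$ coincides with its own orthogonal in $M^\sharp/M$; combined with the index count $|N/M|=|L_1/M|\cdot|L_2/M|=|M^\sharp/M|$ one gets $N=M^\sharp$, and the pairing $L_1/M\times N/L_1\to\Q/\Z$ is then perfect, yielding $L_1/M\simeq N/L_1\simeq L_2/M$. This is essentially the argument of \cite[ch.~III, \S1]{CL} that the paper cites for that parity.

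Two genuine issues remain. First, a small slip: your justification for $N\subset M^\sharp$ asserts $x\cdot y\in\Z$ for all $x,y\in N$, i.e.\ that $N$ is integral; but there is no reason $L_1\cdot L_2\subset\Z$, so $N$ need not be integral. What is true (and suffices) is $N\cdot M\subset\Z$, since $M\subset L_i$ and each $L_i$ is integral.

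Second, and this is the real gap, when $n$ is odd ($\det L_i=2$) the pairing $L_1/M\times N/L_1\to\Q/\Z$ is in general \emph{not} perfect. The orthogonal of $L_1/M$ inside $M^\sharp/M$ is $L_1^\sharp/M\supsetneq L_1/M$, and $N\subsetneq M^\sharp$ with index $2$. Computing the kernels, the left kernel is $(L_1\cap L_2^\sharp)/M$ and the right kernel is isomorphic to $(L_2\cap L_1^\sharp)/M$, each of order $1$ or $2$; so your step $L_1/M\simeq N/L_1$ does not go through. To see that ``integral with equal determinant'' alone is not enough, take $L_1=\langle 2e_1,2e_2,e_3\rangle$ and $L_2=\langle e_1,e_2,4e_3\rangle$ in $\R^3$: both are integral of determinant $16$, yet $L_1/M\simeq\Z/4$ while $L_2/M\simeq(\Z/2)^2$. (These are not in $\mathcal{L}_n$, nor even in the same genus, but the example shows your duality step alone cannot close the loop.)

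A repair for odd $n$ is to localise: at every odd prime $p$ the lattices $L_i\otimes\Z_p$ are unimodular and your argument gives the isomorphism of $p$-primary parts; the prime $2$ then requires the finer analysis of \cite[Annexe~B, \S3]{CL}, which is precisely the second reference the paper invokes for that parity.
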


\begin{proof}
Voir \cite[ch.III, \S1]{CL} et \cite[Annexe B,\S 3]{CL} selon la parité de $n$.
\end{proof}

Dans le cas particulier où $A$ est de la forme $\Z/d\Z$, on parlera de $d$-voisin : c'est ce cas qui nous intéressera plus particulièrement. Il est alors facile de construire l'ensemble des $d$-voisins d'un réseau $L$ donné :

\begin{prop} \label{dvoisin} Soient $L\in \mathcal{L}_n$ et $d\in \N^*$. On note $C_L(\Z/d\Z)$ l'ensemble des droites isotropes de $L/dL$ (où on entend par droite un $\Z/d$-module libre de dimension $1$). Alors les d-voisins de $L$ sont en bijection naturelle avec les points de la quadrique $C_L (\Z/d\Z)$ comme suit.

Donnons-nous $x$ une droite isotrope de $L/dL$ et $v\in L$ dont l'image dans $L/dL$ engendre $x$ vérifiant $v\cdot v\equiv 0\mathrm{\ mod\ }2d^2$, et notons $M$ l'image réciproque de $x^\perp$ par l'homomorphisme $L\rightarrow L/dL$. Alors le réseau $M+\Z \frac{v}{d}$ est un $d$-voisin de $L$ ne dépendant que de $x$ : on le note $L'(x)$.

L'application $x\mapsto L'(x)$ est une bijection entre $C_L(\Z/d\Z)$ et l'ensemble $\mathrm{Vois}_d(L)$ des $d$-voisins de $L$. Le réseau $L'(x)$ sera appelé le $d$-voisin de $L$ associé à $x$
\end{prop}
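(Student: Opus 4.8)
The plan is to reproduce the classical description of Kneser's $d$-neighbours, as developed in \cite[ch. III, \S 1]{CL} for $n$ even and in \cite[Annexe B, \S 3]{CL} for $n$ odd; here is a self-contained route. First I would check that an isotropic line $x\subset L/dL$ always admits a representative $v\in L$ with $v\cdot v\equiv 0\bmod 2d^2$: starting from any $v_0\in L$ whose image generates $x$, isotropy gives $v_0\cdot v_0=2d\,a$ for some $a\in\Z$; replacing $v_0$ by $v_0+d\,w$ modifies $v_0\cdot v_0$ by $2d(v_0\cdot w)+d^2(w\cdot w)$, and since $L$ is pair the last term lies in $2d\Z$, so it is enough to choose $w\in L$ with $v_0\cdot w\equiv -a\bmod d$, which is possible because reduction mod $d$ of the bilinear form of $L$ pairs the line $x$ onto all of $\Z/d\Z$.

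Next I would verify that $L'(x)=M+\Z\frac{v}{d}$ lies in $\mathcal{L}_n$ and is a $\Z/d\Z$-voisin of $L$. Writing a general element as $y=m+k\frac{v}{d}$ with $m\in M$, $k\in\Z$, one has $y\cdot y=m\cdot m+\tfrac{2k}{d}(m\cdot v)+\tfrac{k^2}{d^2}(v\cdot v)$; here $m\cdot m\in 2\Z$ ($L$ pair), $m\cdot v\equiv 0\bmod d$ (as $\overline{m}\in x^\perp$), and $v\cdot v\equiv 0\bmod 2d^2$ by the choice of $v$, so $y\cdot y\in 2\Z$ and $L'(x)$ is pair, hence entier. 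Since $\overline{v}\in x\subset x^\perp$ gives $v\in M$, and $\frac{v}{d}$ has exact order $d$ modulo $L$ (because $\overline v$ generates a free rank-one submodule of $L/dL$), one gets $L\cap L'(x)=M$; the bilinear form identifies $L/M$ with $\mathrm{Hom}(x,\Z/d\Z)$, so $[L:M]=d=[L'(x):M]$, and comparing covolumes through $L\cap L'(x)$ yields $\det L'(x)=\det L$. That $L'(x)$ depends only on $x$ I would obtain as follows: a second admissible representative satisfies $v'=\tilde u\,v+d\,w$ with $\tilde u$ prime to $d$ and $w\in L$, and expanding $v'\cdot v'$ while using $v\cdot v,v'\cdot v'\equiv 0\bmod 2d^2$ and $w\cdot w\in 2\Z$ forces $v\cdot w\equiv 0\bmod d$, i.e. $w\in M$, whence $M+\Z\frac{v'}{d}=M+\Z\frac{v}{d}$ since $\tilde u$ is invertible modulo $d$.

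Finally, for bijectivity of $x\mapsto L'(x)$: if $L'(x)=L'(x')$ then $M=L\cap L'(x)=L\cap L'(x')=M'$, so $x^\perp=(x')^\perp$ and therefore $x=x'$ by nondegeneracy of the form (the orthogonal of an isotropic line determines the line). Conversely, given any $\Z/d\Z$-voisin $L'$ of $L$, I would set $N=L\cap L'$, which has index $d$ in $L$ and in $L'$, pick $u\in L'$ generating the cyclic group $L'/N$, and put $v=d\,u\in N\subset L$; then $\overline v$ generates an isotropic line $x$ of $L/dL$ (because $v\cdot v=d^2(u\cdot u)\equiv 0\bmod 2d^2$, as $u\cdot u\in 2\Z$), every $y\in N$ satisfies $y\cdot v=d(y\cdot u)\equiv 0\bmod d$ so $N\subseteq M$, hence $N=M$ by the index count, and thus $L'=N+\Z u=M+\Z\frac{v}{d}=L'(x)$.

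The hard part is the behaviour when $d$ is not prime to $\det L$ — concretely $d$ even and $n$ odd, where the form on $L/dL$ is degenerate: one must then check, for the lines that actually occur (those generated by a vector of norm $\equiv 0\bmod 2d^2$), that they still pair onto $\Z/d\Z$ and that $(x^\perp)^\perp=x$. Carrying this through carefully — or simply invoking \cite[Annexe B, \S 3]{CL} — is the only real obstacle; the remaining verifications are the elementary computations sketched above.
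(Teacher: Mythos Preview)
Your proposal is correct and follows exactly the route the paper takes: the paper's own proof is nothing more than ``Voir \cite[ch.~III, \S 1]{CL} et \cite[Annexe B, \S 3]{CL} selon la parité de $n$'', and what you have written is precisely a careful reconstruction of that classical argument. You have also correctly isolated the one genuine subtlety (the degenerate pairing on $L/dL$ when $n$ is odd and $d$ is even) and pointed to the right reference for it, which is all the paper does as well.
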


\begin{proof}
Voir \cite[ch. III, \S 1]{CL} et \cite[Annexe B, \S 3]{CL} selon la parité de n.
\end{proof}

\begin{propdef} Soient $n\equiv 0,\pm 1\mathrm{\ mod\ }8$, et $L\in \mathcal{L}_n$. Alors le cardinal de la quadrique $C_L (\Z/d\Z)$ ne dépend que de $n$ et de $d$, et on le notera $c_n(d)$.

En particulier, pour $p$ premier, on a le résultat suivant :
$$c_n(p)= \left\{ \begin{array}{ll}
\sum_{i=1}^{n-2}p^i +p^{\frac{n}{2}-1} & \text{ si $n$ est pair ;}\\
\sum_{i=1}^{n-2}p^i & \text{ si $n$ est impair.}\\
\end{array} \right.$$

Le calcul de $c_n(d)$ pour $d\in \Z$ quelconque se déduit des constatations suivantes :
\begin{enumerate}
\item[$(i)$] $c_n(d_1\, d_2)=c_n(d_1)\, c_n(d_2)$ si $d_1$ et $d_2$ sont premiers entre eux ;
\item[$(ii)$] $c_n(p^k) = p^{(k-1)(n-2)}\, c_n(p)$ pour $p$ premier et $k\in \N$.
\end{enumerate}
\end{propdef}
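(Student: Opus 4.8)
The plan is to reduce the statement to a count of isotropic lines in a single quadratic $\F_p$-space, treating the arithmetic of $d$ by the Chinese Remainder Theorem and handling prime powers by a Hensel-type fibration over the prime $p$.

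\emph{Well-definedness and the arithmetic of $d$.} The set $C_L(\Z/d\Z)$ depends only on the quadratic $\Z/d\Z$-module $(L/dL,\overline q)$ obtained by reducing $q$ modulo $d$ (the reduction is legitimate since $q(x+dy)-q(x)\in d\Z$ for all $x,y\in L$). By the Chinese Remainder Theorem this module is, compatibly with $\overline q$, the product of the $L/p^{v_p(d)}L$ over the primes $p\mid d$, so it suffices to know that the $\Z_p$-lattice $L\otimes\Z_p$ is independent of $L\in\mathcal L_n$ for each $p$; but this is the classical statement that $\mathcal L_n$ is a single genus — even lattices of rank $n\equiv 0,\pm 1\bmod 8$ and determinant $1$ (resp.\ $2$) form one genus — which one may take from \cite{CS99} or recover from the classification of genera of even lattices by signature and discriminant form. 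Hence $|C_L(\Z/d\Z)|$ depends only on $n$ and $d$; write it $c_n(d)$. Multiplicativity $(i)$ is then immediate from the same CRT isomorphism $L/d_1d_2L\simeq (L/d_1L)\times(L/d_2L)$: a vector of the product generates a free rank-one submodule on which $\overline q$ vanishes if and only if each component does, and two such vectors span the same submodule if and only if their components agree up to a unit in each factor, so $C_L(\Z/d_1d_2\Z)$ is in natural bijection with $C_L(\Z/d_1\Z)\times C_L(\Z/d_2\Z)$.

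\emph{Prime powers $(ii)$.} Reduction modulo $p$ sends an isotropic line of $L/p^kL$ — a free submodule $\langle u\rangle$ with $p\nmid u$ and $\overline q(u)\equiv 0\bmod p^k$ — to the isotropic line $\langle u\bmod p\rangle$ of $L/pL$, and I would show every fibre of the resulting surjection $C_L(\Z/p^k\Z)\to C_L(\Z/p\Z)$ has exactly $p^{(k-1)(n-2)}$ elements. Fix an isotropic line $\langle\overline v\rangle$ of $L/pL$. The generators $u$ of lines above it are precisely the lifts to $L/p^kL$ of the nonzero elements of $\langle\overline v\rangle$ satisfying $\overline q(u)\equiv 0\bmod p^k$. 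For a fixed choice of $u\bmod p$, writing $u=v_0+pw$ with $w\in L/p^{k-1}L$ one has $\overline q(u)=\overline q(v_0)+p\,(v_0\cdot w)+p^2\overline q(w)$; since the linear form $w\mapsto v_0\cdot w$ is surjective onto $\Z/p^{k-1}\Z$ — because $v_0\notin pL^\sharp$, the radical of the bilinear form on $L/pL$ being $0$ except when $p=2$ and $n$ is odd, in which case it is one-dimensional with $\overline q$ anisotropic on it and hence disjoint from the isotropic lines — the equation $\overline q(u)\equiv 0\bmod p^k$ has exactly $p^{(k-1)(n-1)}$ solutions $w$ (the relevant map from $L/p^{k-1}L$ to $\Z/p^{k-1}\Z$ having everywhere-surjective differential, hence constant fibre size). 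Summing over the $p-1$ choices of $u\bmod p$ and dividing by the $\varphi(p^k)=p^{k-1}(p-1)$ generators of each line gives $p^{(k-1)(n-2)}$ lines in the fibre, so $c_n(p^k)=p^{(k-1)(n-2)}\,c_n(p)$.

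\emph{The value of $c_n(p)$.} By the above it remains to count the isotropic lines of the quadratic $\F_p$-space $L/pL$. For $n$ even with $\det L=1$, this space is nondegenerate of hyperbolic type: for $p$ odd because its discriminant $(-1)^{n/2}\det L=1$ is a square ($n\equiv 0\bmod 8$ makes $n/2$ even), and for $p=2$ because the mod-$2$ reduction of an even unimodular lattice has trivial Arf invariant. For $n$ odd with $\det L=2$: for $p$ odd, $L/pL$ is a nondegenerate form of odd dimension $n$, and all such forms — being unique up to scaling — have the same isotropic-line count irrespective of the discriminant; for $p=2$, the one-dimensional radical of $L/2L$ carries an anisotropic $\overline q$, and partitioning $L/2L$ into cosets of the radical (exactly one isotropic vector per coset) gives the same number. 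In either case the standard formula for the number of $\F_p$-points of the corresponding projective quadric yields the displayed expression for $c_n(p)$. The substantive inputs are thus the single-genus fact (without which $c_n(d)$ is not even well defined) and the uniform fibre count of $(ii)$; everything else is elementary bookkeeping about quadratic forms over finite fields, the only delicate verifications being at $p=2$.
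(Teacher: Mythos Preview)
Your proof is correct and follows the same strategy as the paper: CRT for $(i)$ and a lifting argument for $(ii)$. The paper is terser --- it defers both the well-definedness of $c_n(d)$ and the explicit value of $c_n(p)$ entirely to \cite{CL}, and for $(ii)$ it reduces one power of $p$ at a time, identifying the fibre of $C_L(\Z/p^k\Z)\to C_L(\Z/p^{k-1}\Z)$ over a line generated by $v$ with an affine space under $v^\perp/\F_p v$ (hence of cardinality $p^{n-2}$), then concludes by induction on $k$; you instead jump directly to $C_L(\F_p)$ and count the full fibre at once. Your version is more self-contained (the single-genus argument for well-definedness and the careful treatment of the $p=2$, $n$ odd radical are details the paper leaves to the reference), while the paper's one-step fibre description is a bit cleaner and sidesteps the slight informality in your ``everywhere-surjective differential, hence constant fibre size'' step --- which is correct, but really encodes exactly the same inductive Hensel lift the paper performs explicitly.
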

\begin{proof}
Le calcul de $c_n(p)$ pour $p$ premier se déduit de \cite[ch. III]{CL} et \cite[Annexe B]{CL} selon la parité de $n$.

Le point $(i)$ se déduit de la bijection entre $L/d_1 d_2 L$ et $L/d_1 L \times L/d_2 L$ et du lemme des restes chinois.

Le point $(ii)$ se déduit de la surjection $C_L(\Z/p^k\Z)\twoheadrightarrow C_L(\Z/p^{k-1}\Z)$. Si l'on se donne une droite isotrope $x\in C_L(\Z/p^{k-1}\Z)$ engendré par un vecteur $v\in L\setminus pL$, la fibre au dessus de $x$ est un espace affine dirigé par $v^\perp /\F_p v$, où $v^\perp = \{ w\in L/pL \, \vert\, (v\cdot w) \equiv 0\mathrm{\ mod\ }p\}$. En particulier, ces fibres sont toutes de cardinal $p^{n-2}$, et une récurrence sur $k$ donne le résultat cherché. 
\end{proof}

Notons $\Z [X_n]$ le $\Z$-module libre engendré par l'ensemble $X_n$. On définit sur $\Z [X_n]$ les endomorphismes suivants :
\begin{defi}[Les opérateurs de Hecke] Si $L\in \mathcal{L}_n$, on note $\overline{L}$ sa classe dans $X_n$. On note de plus $\mathrm{Vois}_A(L)$ l'ensemble des $A$-voisins de $L$, et pour tout élément $L'\in \mathrm{Vois}_A(L)$, on note $\overline{L'}$ sa classe dans $X_n$.

L'opérateur de Hecke $\mathrm{T_A}$ est l'endomorphisme de $\Z[X_n]$ défini par :
$$\mathrm{T}_A (\overline{L}) = \sum_{L'\in \mathrm{Vois}_A(L)} \overline{L'},$$
pour tout réseau $L\in \mathcal{L}_n$.
\end{defi}

Posons $N=\vert X_n \vert$, et donnons-nous $L_1,\dots,L_N\in \mathcal{L}_n$ d'image deux-à-deux distinctes $\overline{L}_1,\dots,\overline{L}_N$ dans $X_n$. D'après la définition précédente, si l'on note $T_A=t_{i,j}\in \mathrm{M}_N(\Z)$ la matrice de $\mathrm{T}_A$ dans la base $\overline{L}_1,\dots,\overline{L}_N$, alors le coefficient $t_{i,j}$ est le nombre de $A$-voisins de $L_j$ isomorphes à $L_i$.

Pour simplifier, on notera $\mathrm{T}_d=\mathrm{T}_{\Z/d\Z}$. En particulier, si l'on note $T_d=(t_{i,j})$ la matrice de $\mathrm{T}_d$ dans la base $\overline{L}_1,\dots,\overline{L}_N$ (suivant les notations précédentes), alors on a :
$$(\forall\, j\in \{1,\dots,N\})\ \sum_i t_{i,j}=c_n(d).$$ 

\begin{defi}[Le graphe de Kneser] Soient $p$ un nombre premier, et $n\equiv 0,\pm 1\mathrm{\ mod\ }8$. Le graphe des $p$-voisins $\mathrm{K}_n(p)$ est le graphe défini de la manière suivante :

- l'ensemble des sommets est l'ensemble $X_n$ ;

- l'ensemble des arêtes est l'ensemble des $\{ \overline{L_1},\overline{L_2}\}$, pour $L_1,L_2\in \mathcal{L}_n$ des $p$-voisins (où pour $i=1,2$ on désigne par $\overline{L_i}$ la classe de $L_i$ dans $X_n$).
\end{defi}

\begin{prop} \label{kneserconn} Le graphe de $\mathrm{K}_n(p)$ est connexe pour tout $n$ et pour tout $p$.
\end{prop}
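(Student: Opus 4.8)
Le plan est d'obtenir la connexité de $\mathrm{K}_n(p)$ à partir de l'approximation forte pour le groupe spinoriel, après une reformulation adélique. On commence par constater que $\mathcal{L}_n$ forme un unique genre : deux réseaux $L,L'\in\mathcal{L}_n$ sont isométriques sur $\R$ (étant tous deux définis positifs de rang $n$) ainsi que sur $\Z_\ell$ pour tout premier $\ell$, car un $\Z_\ell$-réseau pair de déterminant $1$ ou $2$ et de rang $n\equiv 0,\pm 1\bmod 8$ est déterminé à isométrie près par ces seules données (bref calcul local, en isolant le cas $\ell=2$ où l'on invoque l'unicité des formes de discriminant à la Nikulin). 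Fixons $L_0\in\mathcal{L}_n$, posons $G=\mathrm{O}(V)$, et soit $K=\prod_\ell K_\ell$ avec $K_\ell=\mathrm{O}(L_0\otimes\Z_\ell)$ le stabilisateur de $L_0\otimes\Z_\ell$. L'application $g\mapsto (g_\ell\cdot(L_0\otimes\Z_\ell))_\ell$ induit une bijection $G(\Q)\backslash G(\A_f)/K\xrightarrow{\sim}X_n$, et via ce dictionnaire la proposition \ref{dvoisin} montre que les $p$-voisins de la classe de $g$ sont exactement les classes des $gh$ pour $h$ parcourant la double classe $K_p\,t_p\,K_p\subset G(\Q_p)$, où $t_p\in G(\Q_p)$ a pour diviseurs élémentaires $(p,1,\dots,1,p^{-1})$ relativement à $L_0\otimes\Z_p$ (dans une base hyperbolique adaptée, $t_p=\mathrm{diag}(p,1,\dots,1,p^{-1})$). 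Par conséquent, $\mathrm{K}_n(p)$ est connexe si, et seulement si, $G(\Q)\cdot H_p\cdot K^{(p)}=G(\A_f)$, où $K^{(p)}=\prod_{\ell\neq p}K_\ell$ et $H_p\subseteq G(\Q_p)$ est le sous-groupe engendré par $K_p\,t_p\,K_p$ (on vérifie que $K_p\subseteq H_p$).

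Il reste deux points à établir. \emph{Localement en $p$}, on montre que $H_p=G(\Q_p)$ : comme $L_0\otimes\Z_p$ contient un plan hyperbolique unimodulaire (ce qui vaut pour tout premier $p$ dès que $n\geq 7$), l'élément $t_p^{-1}$ appartient à $K_p\,t_p\,K_p$, l'orbite du copoids correspondant sous le groupe de Weyl engendre le réseau des copoids, et la norme spinorielle de $t_p$ est la classe de $p$ ; joints à $K_p=\mathrm{O}(L_0\otimes\Z_p)$, ces éléments engendrent $G(\Q_p)$ tout entier, par la décomposition de Cartan. Il suffit alors de démontrer $G(\Q)\cdot G(\Q_p)\cdot K^{(p)}=G(\A_f)$, ce qui est l'approximation forte pour $G$ relativement à $S=\{p,\infty\}$ : pour $n\geq 5$, l'espace quadratique $V\otimes\Q_p$ est isotrope, donc $\mathrm{Spin}(V)(\Q_p)$ est non compact, et $\mathrm{Spin}(V)$ étant simplement connexe, il vérifie l'approximation forte relativement à $S$ ; on en déduit l'énoncé voulu pour $\mathrm{SO}(V)$, puis pour $\mathrm{O}(V)$, en utilisant que $K^{(p)}$ est ouvert. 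Les valeurs $n\leq 6$ compatibles avec $n\equiv 0,\pm 1\bmod 8$ se réduisent à $n=1$, pour lequel $\vert X_1\vert=1$ et l'énoncé est immédiat.

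Le point délicat est le passage de l'approximation forte pour $\mathrm{Spin}(V)$ à l'énoncé pour $\mathrm{O}(V)$ : l'application $\mathrm{Spin}(V)(\Q)\to\mathrm{SO}(V)(\Q)$ n'est pas surjective, son défaut de surjectivité étant mesuré par la norme spinorielle $\mathrm{SO}(V)(\Q)\to\Q^\times/(\Q^\times)^2$ ; il faut donc vérifier que les classes de norme spinorielle réalisées par un déplacement en la place $p$ (où $t_p$ fournit la classe de $p$), jointes à celles qu'apporte $\mathrm{O}(V)(\R)$ — ce qui permet notamment de raisonner avec le groupe orthogonal tout entier, et non avec $\mathrm{SO}$ seul —, épuisent l'obstruction adélique pertinente ; autrement dit, que le fait d'autoriser un déplacement local arbitraire en $p$ ramène le genre de $L_0$ à une unique orbite sous $G(\Q)$ et $K^{(p)}$. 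C'est précisément là qu'interviennent les hypothèses $n\equiv 0,\pm 1\bmod 8$ et $\det(L_0)\in\{1,2\}$. On pourra aussi, plus économiquement, invoquer directement l'énoncé correspondant de Chenevier--Lannes \cite{CL} (chapitre III pour $n$ pair, annexe B pour $n$ impair), dont la démonstration suit exactement ce schéma.
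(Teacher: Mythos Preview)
Votre démonstration est correcte et suit exactement l'approche de \cite{CL}, que l'article se contente de citer (ch.~III, Théorème~1.12 pour $n$ pair, le cas impair se traitant de même). Vous le reconnaissez d'ailleurs vous-même en conclusion : la preuve de l'article n'étant qu'un renvoi bibliographique, votre esquisse de l'argument d'approximation forte --- genre unique, réinterprétation adélique des $p$-voisins, $H_p=G(\Q_p)$ via la décomposition de Cartan, puis approximation forte pour $\mathrm{Spin}$ et contrôle de la norme spinorielle --- fournit en réalité plus de détails que l'article lui-même.
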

\begin{proof}
Le cas où $n$ est pair est démontré dans \cite[ch. III, Théorème 1.12]{CL}. Le cas où $n$ est impair se traite exactement de la même manière.
\end{proof}

Les opérateurs de Hecke $\mathrm{T}_A$ participent à la notion plus générale d'anneau de Hecke d'un schéma en groupes affine sur $\Z$ de type fini. Si l'on se donne $G$ un tel schéma en groupe, on peut lui associer son anneau de Hecke défini comme suit :
\begin{defi}[L'anneau des opérateurs de Hecke] Soit $\Gamma$ un groupe, et soit $X$ un $\Gamma$-ensemble transitif. On définit l'anneau des opérateurs de Hecke de $X$ comme le sous-anneau $\mathrm{H}(X)\subset \mathrm{End}_\Z (\Z[X])$ des endomorphismes commutant à l'action de $\Gamma$.
\end{defi}
\begin{defi}[L'anneau de Hecke d'un schéma en groupe] Soit $G$ schéma en groupes affine sur $\Z$ de type fini. Si l'on note $P$ l'ensemble des nombres premiers, on note $\widehat{\Z}=\prod_{p\in P}\Z_p$, et $\mathbb{A}_f =\Q \otimes \widehat{\Z}$ l'anneau des adèles finis de $\Q$. On définit alors le $G(\mathbb{A}_f)$-ensemble : $\mathcal{R}(G)=G(\mathbb{A}_f)/G(\widehat{\Z})$. L'anneau de Hecke de $G$ est alors défini comme :
$$\mathrm{H}(G) = \mathrm{H}(\mathcal{R}(G))$$
où $G(\mathbb{A}_f)$ joue le rôle de $\Gamma$ dans la définition précédente.
\end{defi}

En particulier, on s'intéressera aux cas où $G=\mathrm{O}_n$ ou $G=\mathrm{SO}_n$, définis comme suit :

\begin{defi} Si on se donne $L_0$ un élément de $\mathcal{L}_n$, on définit $\mathrm{O}_n$ le schéma en groupes affine sur $\Z$ associé à la forme quadratique $L_0 \rightarrow\Z$, $x\mapsto q(x)$. Il s'agit de l'objet noté $\mathrm{O}_{L_0}$ dans \cite[ch. II, \S1]{CL}. On définit de même $\mathrm{SO}_n \subset \mathrm{O}_n$ (introduit aussi dans dans \cite[ch. II, \S1]{CL}). Les schémas $\mathrm{O}_n$ et $\mathrm{SO}_n$ ainsi définis sont des schéma en groupes affine sur $\Z$ de type fini (ce dernier étant même réductif).
\end{defi}

Les anneaux de Hecke $\mathrm{H}(\mathrm{O}_n)\subset \mathrm{H}(\mathrm{SO}_n)$ sont alors bien définis. Considérons $G=\mathrm{O}_n$, et donnons-nous $L_0\in \mathcal{L}_n$. Alors l'ensemble $\mathcal{R}(G)$ s'identifie à l'ensemble des réseaux de $L_0\otimes \Q$ qui sont dans $\mathcal{L}_n$. Cette identification permet de voir les opérateurs de Hecke $\mathrm{T}_A$ introduits précédemment comme des éléments de l'algèbre de Hecke $\mathrm{H}(G)$. On a alors la proposition suivante :

\begin{prop} \label{H(G)} Soient $n\equiv 0,\pm 1\mathrm{\ mod\ }8$, et $G=\mathrm{O}_n$. Alors :

\begin{enumerate}
\item[$(i)$] Les opérateurs de Hecke $\mathrm{T}_A$ associés aux $A$-voisins forment une $\Z$-base de l'anneau de Hecke $\mathrm{H}(G)$.
\item[$(ii)$] L'anneau $\mathrm{H}(G)$ est commutatif.
\end{enumerate}
\end{prop}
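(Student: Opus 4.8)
Je démontrerais d'abord $(i)$, puis en déduirais $(ii)$. Le point de départ est le fait général suivant : pour un groupe $\Gamma$ agissant transitivement sur un ensemble $X$, de point-base $x_0$ et de stabilisateur $\Gamma_0$, tout élément de $\mathrm{H}(X)=\mathrm{End}_\Gamma(\Z[X])$ est uniquement déterminé par sa valeur en $x_0$, laquelle peut être n'importe quelle combinaison à support fini $\sum_y c_y\,y$ dont les coefficients $c_y$ sont constants sur les $\Gamma_0$-orbites de $X$ ; ces orbites étant toutes finies dans notre situation, les opérateurs $\mathrm{T}_{\mathcal{O}}$ définis par $\mathrm{T}_{\mathcal{O}}(x_0)=\sum_{y\in\mathcal{O}}y$, pour $\mathcal{O}$ parcourant les $\Gamma_0$-orbites de $X$, forment alors une $\Z$-base de $\mathrm{H}(X)$. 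On applique ceci à $\Gamma=G(\mathbb{A}_f)$, $\Gamma_0=G(\widehat{\Z})$ et $X=\mathcal{R}(G)$, avec pour $x_0$ l'image de l'élément neutre, identifié à $L_0$ via l'identification (rappelée avant l'énoncé) de $\mathcal{R}(G)$ à l'ensemble des réseaux $M\subset L_0\otimes\Q$ appartenant à $\mathcal{L}_n$.

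Il reste à décrire les $G(\widehat{\Z})$-orbites de $\mathcal{R}(G)$. Je montrerais que l'invariant $M\mapsto [\,L_0/(L_0\cap M)\,]$ (classe d'isomorphisme de groupe abélien fini) induit une bijection entre ces orbites et l'ensemble des classes d'isomorphisme de groupes abéliens finis $A$ tels que $L_0$ admette un $A$-voisin. Qu'il soit bien défini et $G(\widehat{\Z})$-invariant est immédiat : pour $\gamma\in G(\widehat{\Z})$ on a $(L_0\cap\gamma M)\otimes\Z_p=\gamma_p\big((L_0\cap M)\otimes\Z_p\big)$ en tout $p$, d'où $L_0/(L_0\cap\gamma M)\simeq L_0/(L_0\cap M)$ ; la surjectivité est tautologique. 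L'injectivité est la partie essentielle : comme $G(\widehat{\Z})=\prod_p G(\Z_p)$, elle résulte de l'énoncé local affirmant que, pour chaque $p$, la $\mathrm{O}(L_0\otimes\Z_p)$-orbite d'un réseau de $L_0\otimes\Q_p$ localement isométrique à $L_0\otimes\Z_p$ ne dépend que de ses diviseurs élémentaires relativement à $L_0\otimes\Z_p$, c'est-à-dire de $A\otimes\Z_p$ — énoncé de théorie des réseaux quadratiques $p$-adiques établi dans \cite[ch. III, \S 1]{CL} pour $n$ pair et dans \cite[Annexe B, \S 3]{CL} pour $n$ impair, le cas $p=2$ étant le plus délicat. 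Enfin, pour identifier $\mathrm{T}_A$ à l'opérateur $\mathrm{T}_{\mathcal{O}_A}$ de l'orbite $\mathcal{O}_A$ correspondant à $A$, on utilise la symétrie de la relation de $A$-voisinage (proposition-définition sur les $A$-voisins : $L_0/(L_0\cap M)\simeq A$ équivaut à $M/(L_0\cap M)\simeq A$), qui donne l'égalité $\mathrm{Vois}_A(L_0)=\mathcal{O}_A$, donc $\mathrm{T}_A(x_0)=\sum_{M\in\mathcal{O}_A}M=\mathrm{T}_{\mathcal{O}_A}(x_0)$ et $\mathrm{T}_A=\mathrm{T}_{\mathcal{O}_A}$. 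Ceci prouve $(i)$.

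Pour $(ii)$, je ferais appel à l'anti-automorphisme involutif $\iota$ de $\mathrm{H}(G)$ induit par l'inversion $g\mapsto g^{-1}$ sur $G(\mathbb{A}_f)$ (au niveau des fonctions bi-$G(\widehat{\Z})$-invariantes à support fini, $\iota(f)(g)=f(g^{-1})$, d'où $\iota(f_1 f_2)=\iota(f_2)\,\iota(f_1)$ et $\iota^2=\mathrm{id}$), qui envoie l'opérateur d'une double-classe $D$ sur celui de $D^{-1}$. Pour tout $A$, on a $D_A^{-1}=D_A$ : si $gL_0$ est un $A$-voisin de $L_0$, alors en appliquant localement les $g_p$ puis la symétrie du $A$-voisinage, on obtient $L_0/(L_0\cap g^{-1}L_0)\simeq gL_0/(gL_0\cap L_0)\simeq L_0/(L_0\cap gL_0)\simeq A$, donc $g^{-1}L_0$ appartient lui aussi à $\mathcal{O}_A$. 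Ainsi $\iota$ fixe la $\Z$-base $\{\mathrm{T}_A\}$ de $\mathrm{H}(G)$, si bien que $\iota=\mathrm{id}$, et l'identité $\mathrm{T}_A\,\mathrm{T}_B=\iota(\mathrm{T}_A\,\mathrm{T}_B)=\iota(\mathrm{T}_B)\,\iota(\mathrm{T}_A)=\mathrm{T}_B\,\mathrm{T}_A$ donne la commutativité de $\mathrm{H}(G)$.

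L'obstacle principal est l'énoncé local invoqué dans la preuve de $(i)$, à savoir la classification des $\mathrm{O}(L_0\otimes\Z_p)$-orbites de réseaux de $L_0\otimes\Q_p$ par leurs diviseurs élémentaires (ainsi que, pour $(ii)$, la symétrie de ces diviseurs, conséquence de la proposition-définition sur les $A$-voisins) : cela requiert une analyse soigneuse des formes quadratiques paires sur $\Z_2$ — et, pour $n$ impair, de déterminant $2$ —, pour laquelle je renverrais à \cite{CL}.
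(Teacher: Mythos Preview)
Ta démonstration est correcte et constitue précisément le déroulé de l'argument auquel l'article renvoie (la preuve de l'article se réduit à la citation \og Voir \cite[ch. IV, \S 2.6]{CL} \fg) : classification des $G(\widehat{\Z})$-orbites dans $\mathcal{R}(G)$ par les diviseurs élémentaires via l'analyse locale de \cite{CL}, puis commutativité par l'astuce de Gelfand appliquée à l'anti-involution $g\mapsto g^{-1}$, qui fixe chaque double classe grâce à la symétrie de la relation de $A$-voisinage. Tu ne t'écartes donc pas de l'approche de l'article ; tu la rends simplement explicite.
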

\begin{proof}
Voir \cite[ch. IV, \S 2.6]{CL}.
\end{proof}

\subsection{La paramétrisation de Langlands-Satake.} \label{2.3}
Dans toute la suite, $G$ désignera un schéma en groupe affine sur $\Z$ de type fini et semi-simple. Notons $\widehat{G}$ son dual au sens de Langlands. C'est un $\C$-groupe réductif dont la donnée radicielle est duale à celle de $G(\C)$, suivant Borel \cite{Bor77} et Springer \cite{Spr79} par exemple. Notons de plus $\widehat{\mathfrak{g}}$ l'algèbre de Lie complexe de $\widehat{G}$, et $\widehat{G}(\C)_{\mathrm{ss}}$ et $\widehat{\mathfrak{g}}(\C)_{\mathrm{ss}}$ les classes de $\widehat{G}(\C)$-conjugaison d'éléments semi-simples respectivement de $\widehat{G}(\C)$ et $\widehat{\mathfrak{g}}(\C)$.

\`A la manière de \cite[ch. IV, \S 3.2]{CL}, on note $\Pi (G)$ l'ensemble des classes d'isomorphisme de représentations unitaires irréductibles $\pi$ de $G(\mathbb{A})$ telles que $\pi_p$ est non ramifiée pour tout $p$ premier. Nous noterons également $\Pi_{\mathrm{cusp}}(G)$ et $\Pi_{\mathrm{disc}}(G)$ respectivement l'ensemble des représentations automorphes cuspidales et l'ensemble des représentations automorphes discrètes de $G$, suivant les notations de \cite{CL} et \cite{CR}.

On désigne par $P$ l'ensemble des nombres premiers, et on définit $\mathcal{X}(\widehat{G})$ l'ensemble des familles $(c_v)_{v\in P\cup \{\infty \} }$, où $c_\infty \in \widehat{\mathfrak{g}}_{\mathrm{ss}}$ et $c_p \in \widehat{G}(\C)_{\mathrm{ss}}$ pour tout $p\in P$. Suivant Langlands dans \cite{Lan}, on dispose d'une application canonique $c :\Pi (G)\rightarrow \mathcal{X}(\widehat{G}), \pi \mapsto (c_v(\pi))$. L'élément $c_\infty(\pi)$ est appelé le caractère infinitésimal de $\pi$. Lorsque $G=\mathrm{PGL}_n$, auquel cas on a $\widehat{G}=\mathrm{SL}_n(\C)$, les valeurs propres du caractère infinitésimal de $\pi$ sont bien définies et sont appelées les poids de $\pi$.

Si l'on possède $r:\widehat{G}\rightarrow \mathrm{SL}_n$ une $\C$-représentation, celle-ci induit une application $\mathcal{X}(\widehat{G}) \rightarrow \mathcal{X}(\mathrm{SL}_n),(c_v)\mapsto (r(c_v))$. Pour $\pi\in \Pi(G)$, on note $\psi (\pi,r)=r(c(\pi))$ : c'est un élément de $\mathcal{X}(\mathrm{SL}_n)$ qu'on appelle le paramètre de Langlands du couple $(\pi,r)$.

En pratique, on considérera le cas où $G$ est le groupe $\mathrm{SO}_n$, et $\widehat{G}$ est donc un groupe de la forme $\mathrm{SO}_m$ ou $\mathrm{Sp}_{2m}$. On utilisera alors le paramètre de Langlands du couple $(\pi,{\mathrm{St}})$, où ${\mathrm{St}}$ désigne la représentation standard de $\widehat{G}$, et on parlera alors du paramètre standard de $\pi$.\\

Soit $k\in \N^*$. Pour tout $i\in \{1,\dots,k\}$, donnons-nous $n_i,d_i \in \N^*$ et $\pi_i \in \Pi_{\mathrm{cusp}}(\mathrm{PGL}_{n_i})$ tels que $\sum_i n_i d_i =n$. On dispose alors d'un élément de $\mathcal{X}(\mathrm{SL}_n)$, noté $\oplus_i \pi_i [d_i]$ dans \cite{CL} ou \cite{CR}. Par définition, les paramètres de Langlands de $\oplus_i \pi_i [d_i]$ satisfont les égalités suivantes :

$$ (\forall\, v\in P\cup \{\infty \}) \ c_v (\oplus_i \pi_i [d_i]) = \bigoplus_i c_v (\pi_i) \otimes \mathrm{Sym}^{d_i-1} (e_v)$$

où $e_\infty = \left( \begin{matrix}
-\frac{1}{2} & 0 \\
0 & \frac{1}{2}\\
\end{matrix} \right)$ et $e_p = \left( \begin{matrix}
p^{-\frac{1}{2}} & 0 \\
0 & p^{\frac{1}{2}} \\
\end{matrix} \right) $.

\`A la manière de \cite[Ch. VI, \S 4]{CL} on note $\mathcal{X}_{\mathrm{AL}}(\mathrm{SL}_n)$ l'ensemble des éléments de la forme $\oplus_i \pi_i [d_i]$, suivant ces notations. Avec ce formalisme, la conjecture d'Arthur-Langlands \cite{Lan} \cite{Art89} se formule facilement (voir \cite[ch. VI, \S4, Conjecture 4.6]{CL}). Lorsque $G=\mathrm{SO}_n$ et que l'on considère la représentation standard de $\widehat{G}$, cette conjecture a été vérifiée par Taïbi \cite{Tai16}, dont les résultats reposent sur les travaux d'Arthur \cite{Art89}, ainsi que sur ceux de Kaletha \cite{Kal} et Arancibia-Moeglin-Renard \cite{AMR}. Ainsi, le paramètre standard d'une représentation $\pi\in \Pi_{\mathrm{disc}}(\mathrm{SO}_n)$ est un élément de $\mathcal{X}_{\mathrm{AL}}(\mathrm{SL}_n)$.

Si l'on se donne $\pi\in \Pi_{\mathrm{disc}}(\mathrm{SO}_n)$, alors on possède une égalité de la forme $\psi(\pi,\mathrm{St})=\bigoplus_i \pi_i [d_i]$. On définit les poids de $\pi$ comme les valeurs propres du caractère infinitésimal de $\psi(\pi,\mathrm{St})$. Une étude de ce caractère infinitésimal nous dit que les $\pi_i$ précédents sont des éléments de $\Pi_{\mathrm{alg}}^\bot (\mathrm{PGL}_{n_i})$ (suivant les notations de \cite{CL} ou \cite{CR} déjà exposées en introduction).

Enfin, nous adopterons les notations suivantes. Considérons $n\in\{1,2,3\}$, et $w_1>\dots >w_n>0$ des entiers de même parité. Si l'on désigne $\Pi$ l'ensemble des éléments de $\Pi_{\mathrm{alg}}^\bot (\mathrm{PGL}_{2n})$ de poids l'ensemble $\{\pm \frac{w_1}{2},\dots,\pm \frac{w_n}{2}\}$. L'ensemble $\Pi$ est fini. Notons $m$ son cardinal. On note $\Delta_{w_1,\dots,w_n}$ son unique élément lorsque $m=1$, et $\Delta_{w_1,\dots,w_n}^m$ n'importe lequel de ses éléments sinon. Lorsque $m=1$, on définit la fonction $D_{w_1,\dots,w_n}(p) = p^{\frac{w_1}{2}} \mathrm{Trace}\left( c_p(\Delta_{w_1,\dots,w_n}) \vert V_{\mathrm{St}} \right)$. Si $m>1$, on définit l'ensemble de fonctions $D^m_{w_1,\dots,w_n}(p)=\{ p^{\frac{w_1}{2}} \mathrm{Trace}\left( c_p(\Delta^m_{w_1,\dots,w_n}) \vert V_{\mathrm{St}} \right)\}$.\\

Lorsque $n=1$, les fonctions $D^m_{w_1}$ se comprennent bien à l'aide des formes modulaires pour $\mathrm{SL}_2(\Z)$. Par exemple, on peut considérer les cas où $w_1\in \{11,15,17,19,21\}$ (auxquels cas $m=1$ suivant les notations précédentes). Notons $\tau_{w_1+1}(n)$ le $n$-ème terme du $q$-développement de l'unique forme modulaire normalisée de poids $w_1+1$ pour $\mathrm{SL}_2(\Z)$. Alors on a pour tout $p$ premier l'égalité : $\tau_{w_1+1}(p)=D_{w_1}(p)$.

On considérera aussi le cas $w_1=23$ (auquel cas $m=2$). Notons $\mathrm{E}_k$ la série d'Eisenstein normalisée de poids $k$, et $\Delta=\frac{1}{1728}(\mathrm{E}_4^3-\mathrm{E}_6^2)$ la fonction de Jacobi. Alors les fonctions $\Delta\, \mathrm{E}_4^3$ et $\Delta\, \mathrm{E}_6^2$ forment une base de l'espace des formes modulaires paraboliques de poids $24$. On définit les fonctions $\tau_{24}^\pm (n)$ comme étant le $n$-ème terme du $q$-développement de la forme modulaire parabolique normalisée de poids $24$ suivante :

$$\frac{131 \pm \sqrt{144169}}{144} \Delta\, \mathrm{E}_4^3 + \frac{13 \mp \sqrt{144169}}{144} \Delta\, \mathrm{E}_6^2 .$$

Alors on a pour tout $p$ premier l'égalité : $D^2_{23}(p)=\{ \tau_{24}^+ (p), \tau_{24}^- (p) \}$.\\

Lorsque $n=2$, les fonctions $D^m_{w_1,w_2}$ se comprennent grâce aux formes modulaires de Siegel de genre $2$. Dans la suite, on considérera les cas où $(w_1,w_2) \in \{ (19,7) , (21,5) , (21,9) , (21,13), \newline (23,7) , (23,9) , (23,13) \}$, auxquels cas $m=1$. En reprenant les notations de \cite[Introduction]{CL}, on a pour tout nombre premier $p$ et tout couple $(w_1,w_2)$ dans l'ensemble précédent l'égalité : $\tau_{w_2-1,\frac{w_1-w_2+4}{2}}(p) = D_{w_1,w_2}(p)$.\\

On utilisera notamment la proposition suivante :
\begin{prop}[Les inégalités de Ramanujan] \label{rama} Soient $n\in\{1,2,3\}$, $w_1>\dots>w_n>0$ des entiers de même parité, $\Pi$ l'ensemble des éléments de $\Pi_{\mathrm{alg}}^\bot (\mathrm{PGL}_{2n})$ de poids $\{\pm\frac{w_1}{2},\dots,\pm \frac{w_n}{2}\}$, avec $m=\vert \Pi\vert$, et $p$ un nombre premier.

Si $m=1$, alors : $\vert D_{w_1,\dots,w_n}(p) \vert \leq 2\,n\,p^{\frac{w_1}{2}}$.

Si $m>1$, alors : $(\forall D\in D^m_{w_1,\dots,w_n}(p)) \vert D\vert \leq 2\,n\,p^{\frac{w_1}{2}}$.
\end{prop}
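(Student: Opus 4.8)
The plan is to reduce the stated estimates to the Ramanujan--Petersson conjecture for the cuspidal automorphic representations $\Delta_{w_1,\dots,w_n}$, resp.\ for the elements of $\Pi$, a statement which in the range $2n\le 6$ is a theorem.

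First I would simply unwind the definitions of \S\ref{2.3}. By construction $D_{w_1,\dots,w_n}(p)=p^{w_1/2}\,\mathrm{Trace}\bigl(c_p(\Delta_{w_1,\dots,w_n})\mid V_{\mathrm{St}}\bigr)$, and since the Langlands dual of $\mathrm{PGL}_{2n}$ is $\mathrm{SL}_{2n}(\C)$, the Satake parameter $c_p(\Delta_{w_1,\dots,w_n})$ is a semisimple conjugacy class in $\mathrm{SL}_{2n}(\C)$; writing $\lambda_1,\dots,\lambda_{2n}\in\C^\times$ for its eigenvalues one has $\mathrm{Trace}\bigl(c_p(\Delta_{w_1,\dots,w_n})\mid V_{\mathrm{St}}\bigr)=\sum_{j=1}^{2n}\lambda_j$. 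By the triangle inequality it therefore suffices to show $|\lambda_j|=1$ for every $j$, i.e.\ that $\Delta_{w_1,\dots,w_n}$ is tempered at $p$ --- in the normalisation fixed above, which is the one for which the case $n=1$ recovers Deligne's bound $|\tau_{w_1+1}(p)|\le 2\,p^{w_1/2}$. When $m>1$ the same reduction applies verbatim to each $\Delta^m_{w_1,\dots,w_n}\in\Pi$, treated one at a time.

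The key input is thus the following temperedness statement: every $\pi\in\Pi_{\mathrm{alg}}^\bot(\mathrm{PGL}_{2n})$ with $n\in\{1,2,3\}$ satisfies the Ramanujan conjecture, i.e.\ is tempered at every finite place. Such a $\pi$ is a cuspidal automorphic representation of $\mathrm{GL}_{2n}$ over $\Q$ which is self-dual and algebraic, and it is regular because its weights $\pm\frac{w_1}{2},\dots,\pm\frac{w_n}{2}$ are $2n$ pairwise distinct numbers (the $w_i$ being distinct and positive). For $n=1$ this is Deligne's theorem on the Hecke eigenvalues of level-one holomorphic cusp forms. For $n=2$ and $n=3$ it follows from the temperedness at all finite places of regular algebraic (essentially) self-dual cuspidal automorphic representations of $\mathrm{GL}_N$ over $\Q$ (Clozel--Harris--Labesse, Shin, Caraiani, \dots); for $n=2$ one may alternatively invoke Weissauer's proof of the Ramanujan conjecture for genus-$2$ Siegel cusp forms. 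Granting this, $|\lambda_j|=1$ for all $j$, so $\bigl|\mathrm{Trace}\bigl(c_p(\pi)\mid V_{\mathrm{St}}\bigr)\bigr|\le 2n$ and finally $|D|\le 2n\,p^{w_1/2}$; see also \cite{CL}.

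The only genuine obstacle is this temperedness statement, which is a deep theorem and not an elementary estimate; everything else is the triangle inequality plus the routine check --- immediate from the definitions of \S\ref{2.3} --- that the representations occurring in $\Pi$ are regular, algebraic, self-dual and cuspidal. This is also precisely why the hypothesis $n\le 3$ is imposed: beyond $\mathrm{GL}_6$ the Ramanujan bound is not available in the generality needed here.
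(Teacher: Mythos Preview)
Your argument is correct and follows exactly the paper's route: invoke the Ramanujan conjecture for regular algebraic self-dual cuspidal representations of $\mathrm{GL}_{2n}$ (via Clozel--Harris--Labesse, Shin, Caraiani, Chenevier--Harris, Clozel), deduce that the eigenvalues of $c_p$ lie on the unit circle, and apply the triangle inequality. One small inaccuracy in your closing remark: the temperedness theorem you cite is in fact available for all $\mathrm{GL}_N$ (not just $N\le 6$), so the restriction $n\le 3$ in the statement reflects only what the paper needs, not a limitation of the method.
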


\begin{proof} D'après \cite{CHL11}, \cite{Shin11}, \cite{Ca12}, \cite{CH13} et \cite{Clo13}, si l'on se donne $\Delta\in \Pi$, alors $\Delta$ satisfait la conjecture de Ramanujan. Ainsi, les valeurs propres de $c_p(\Delta)$ sont toutes de module $1$, et donc $\vert \mathrm{Trace}(c_p(\Delta) \vert \mathrm{St}) \vert\leq 2\,n$, puis $\vert p^{\frac{w_1}{2}} \mathrm{Trace}(c_p(\Delta) \vert \mathrm{St}) \vert \leq 2\, n\, p^{\frac{w_1}{2}}$, qui est l'inégalité cherchée.
\end{proof}
\section{Calcul de la matrice de $\mathrm{T}_2$ sur $\Z[X_n]$ pour $n=23$ ou $25$.} \label{3}
\subsection{L'étude des systèmes de racines d'éléments de $X_n$.}\label{3.1}

La proposition suivante était probablement déjà connue de Borcherds :
\begin{prop}\label{RXn} Soient $n=23$ ou $25$, et $L_1,L_2\in \mathcal{L}_n$. Pour $i=1,2$, on note $R_i=R(L_i)$ le système de racines de $L_i$. Alors on a l'équivalence :
$$L_1\simeq L_2 \Leftrightarrow R_1\simeq R_2$$
\end{prop}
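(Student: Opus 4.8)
L'implication $L_1\simeq L_2\Rightarrow R_1\simeq R_2$ est claire, puisqu'une isométrie de $V$ envoyant $L_1$ sur $L_2$ transforme $R(L_1)$ en $R(L_2)$. Pour la réciproque, mon plan est de ramener chacun des cas $n=23$ et $n=25$ à une classification déjà établie (Niemeier--Venkov, resp. Borcherds), en plongeant les éléments de $\mathcal{L}_n$ dans un réseau pair unimodulaire.

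Dans le cas $n=23$, je commencerais par observer que pour $L\in\mathcal{L}_{23}$ on a $\mathrm{r\acute{e}s}\,L\simeq\Z/2\Z$, et que la formule de Milgram (avec $23\equiv -1\bmod 8$) force la forme d'enlacement de $L$ à valoir $3/4$ sur son générateur ; elle est donc anti-isométrique à celle de $\mathrm{A}_1$. On peut alors recoller $L\oplus\mathrm{A}_1$ le long de la diagonale des résidus, ce qui produit un réseau pair unimodulaire $N$ de rang $24$ --- c'est-à-dire un réseau de Niemeier, nécessairement muni de racines ---, dans lequel $L=N\cap r^\perp$, où $r$ engendre le facteur $\mathrm{A}_1$. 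Les résidus en jeu étant cycliques d'ordre $2$, ce recollement est essentiellement unique, de sorte que $N$ ne dépend que de $L$ à isomorphisme près ; réciproquement, tout réseau de Niemeier muni d'une racine fournit un élément de $\mathcal{L}_{23}$. On obtient ainsi une bijection entre $X_{23}$ et l'ensemble des couples $(N,\overline r)$, où $N$ parcourt les $23$ réseaux de Niemeier possédant des racines et $\overline r$ une orbite de racine sous $\mathrm{O}(N)$. Comme $R(L)=R(N)\cap r^\perp$ (c'est exactement le lemme établi plus haut décrivant le système de racines de $N\cap r^\perp$), il resterait à vérifier que la donnée de ce système de racines détermine à la fois $N$ et l'orbite $\overline r$ : connaissant la liste explicite des $23$ systèmes de racines de Niemeier à racines (tous de rang $24$, à facteurs irréductibles de même nombre de Coxeter), on reconstruit $R(N)$ et le type de la composante contenant $r$ en ``inversant'' l'opération $S\mapsto S\cap r^\perp$ du lemme \ref{Rr}, puis on conclut via le fait classique (Niemeier \cite{Nie}, Venkov \cite{Ven}) que $R(N)$ détermine $N$. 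Cela donnerait $L_1\simeq L_2$.

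Le cas $n=25$ se traiterait de la même façon, en remplaçant les réseaux de Niemeier par le réseau pair unimodulaire lorentzien $\mathrm{II}_{25,1}$ : ici $\mathrm{r\acute{e}s}\,L\simeq\Z/2\Z$ avec forme d'enlacement valant $1/4$ (Milgram, $25\equiv 1\bmod 8$), anti-isométrique à celle de $\langle-2\rangle=\Z v$ où $v\cdot v=-2$ ; le recollement diagonal plonge $L$ dans $\mathrm{II}_{25,1}$ comme $v^\perp\cap\mathrm{II}_{25,1}$, de manière unique à $\mathrm{O}(\mathrm{II}_{25,1})$ près. Ainsi $X_{25}$ s'identifie à l'ensemble des orbites de vecteurs de norme $-2$ de $\mathrm{II}_{25,1}$, classifiées par Borcherds, et $R(L)$ n'est autre que l'ensemble des racines de $\mathrm{II}_{25,1}$ orthogonales à $v$. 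Il suffirait alors de constater sur la liste explicite (table \ref{rX25}) que les $121$ systèmes de racines obtenus sont deux à deux non isomorphes.

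Le point délicat est cette vérification combinatoire finale. Pour $n=23$, elle revient à s'assurer que l'inversion de ``$\cap r^\perp$'' (lemme \ref{Rr}) est sans ambiguïté sur les $23$ systèmes de Niemeier à racines : les contraintes de rang maximal et de nombre de Coxeter commun y sont essentielles (par exemple, un système de Niemeier ne contient jamais à la fois un facteur $\mathrm{D}_m$ et un facteur $\mathrm{A}_1$, ce qui sépare le cas $\mathrm{D}_m\cap r^\perp=\mathrm{D}_{m-2}\coprod\mathrm{A}_1$ d'un recollement faisant apparaître un $\mathrm{A}_1$ surnuméraire). Pour $n=25$, il s'agit de parcourir les $121$ entrées de la classification de Borcherds. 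Dans les deux cas, cette vérification se fait à la main, ou se lit directement sur les tables \ref{rX23} et \ref{rX25}.
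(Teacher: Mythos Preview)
Your proof is correct and follows essentially the same approach as the paper: for $n=23$, reduce to the Niemeier classification via the gluing $L\oplus\mathrm{A}_1\hookrightarrow N$ and the bijection between $X_{23}$ and pairs $(N,\overline r)$ (which the paper invokes from \cite[Annexe B, prop.~2.6]{CL}), then inspect table~\ref{RX23}; for $n=25$, reduce to Borcherds' enumeration and inspect. Your account is somewhat more self-contained---you spell out the gluing via residues and Milgram's formula, and you make explicit the Lorentzian embedding $L\hookrightarrow\mathrm{II}_{25,1}$ underlying Borcherds' classification---but the architecture of the argument and the final combinatorial verification are identical to the paper's.
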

\begin{proof}
L'implication $L_1\simeq L_2\Rightarrow R_1\simeq R_2$ est évidente. Il suffit donc de montrer que $L_1\not\simeq L_2\Rightarrow R_1\not\simeq R_2$, c'est-à-dire que deux éléments non isomorphes de $\mathcal{L}_n$ ont des systèmes de racines non-isomorphes, ce qui se fait par inspection selon la valeur de $n$.

Si $n=23$ : d'après \cite[Annexe B]{CL}, on sait construire $X_{23}$ à l'aide de $X_{24}$ comme suit. Si on se donne $P\in \mathcal{L}_{24}$ qui n'est pas isomorphe au réseau de Leech, et $r\in R(P)$, alors le réseau $L = P\cap r^\perp$ est un élément de $\mathcal{L}_{23}$, et les classes d'isomorphisme de réseaux ainsi obtenus décrivent tout $X_{23}$. Pour $i=1,2$, donnons-nous $P_i\in \mathcal{L}_{24}$ et $r_i\in R(P_i)$, et notons $R_i$ la composante irréductible de $R(P_i)$ à laquelle appartient $r_i$ et notons $L_i=P_i\cap r_i^\perp \in \mathcal{L}_{23}$ : alors les réseaux $L_1$ et $L_2$ sont isomorphes si, et seulement si, les couples $(P_1,R_1)$ et $(P_2,R_2)$ sont isomorphes (d'après \cite[Annexe B, prop. 2.6]{CL}).

Il suffit donc de regarder pour chaque élément de $X_{23}$ ainsi généré si les classes d'isomorphisme des systèmes de racines sont deux-à-deux distincts. On détermine ces systèmes de racines grâce aux lemmes précédents, que l'on donne dans le table \ref{RX23}, où les notations sont les suivantes : $P$ est un élément de $\mathcal{L}_{24}$ qui n'est pas isomorphe au réseau de Leech (dont la classe d'isomorphisme est entièrement déterminée par celle de son système de racines $R(P)$), $r$ est un élément de $R(P)$ appartement à la composante irréductible $R$, $L=P\cap r^\perp$ est l'élément de $\mathcal{L}_{23}$ qui nous intéresse, et $R(L)$ est son système de racines.

L'inspection de la table \ref{RX23} montre bien le résultat cherché, à savoir que deux éléments non isomorphes de $\mathcal{L}_{23}$ ont des systèmes de racines non isomorphes.\\

Si $n=25$ : on renvoie à \cite{Bor} pour la liste des classes d'isomorphisme des éléments de $\mathcal{L}_{25}$ et à leur systèmes de racines. Une inspection rapide montre que l'on a bien l'équivalence cherchée.
\end{proof}

\subsection{Détermination de la classe d'isomorphisme d'un $2$-voisin d'un élément de $\mathcal{L}_n$.}\label{3.2}
D'après la proposition \ref{RXn}, la table \ref{RX23} (pour $n=23$) et les résultats de Borcherds \cite{Bor} (pour $n=25$) nous permettent de déterminer la classe dans $X_n$ d'un élément de $\mathcal{L}_n$ grâce à son système de racines. Cependant, déterminer la classe d'isomorphisme d'un système de racine peut s'avérer long sur le plan algorithmique. En pratique, on utilisera la proposition suivante :

\begin{prop}\label{Phi} On fixe $n=23$ ou $25$. Soient $L\in \mathcal{L}_n$ et $R(L)$ son système de racines. Soient $\{\alpha_1,\dots,\alpha_k\}$ un ensemble de racines simples de $R(L)$, et $A=(\alpha_i \cdot \alpha_j) = (a_{i,j})\in \mathrm{M}_k(\Z)$ la matrice de Gram associée. Pour $l\in \Z$, on définit les entiers : $n_l = \left\vert \left\{ i\in \{1,\dots ,l\} \,\vert\, \sum_j a_{i,j}=l \right\} \right\vert$, qui ne dépendent que de la classe $\overline{L}$ de $L$ dans $X_n$. De même, les quantités $\vert R(L)\vert$ et $\mathrm{det}(A)$ ne dépendent que de $\overline{L}$.

On définit alors les applications $\Phi_n:\mathcal{L}_n\rightarrow \N^6$ et $\overline{\Phi}_n:X_n\rightarrow \N^6$ par :
$$\Phi_n(L) = \overline{\Phi}_n(\overline{L})=(\vert R(L) \vert,n_2,n_1,n_0,n_{-1},\mathrm{det}(A)).$$

L'application $\overline{\Phi}_n$ ainsi définie est injective sur $X_n$.
\end{prop}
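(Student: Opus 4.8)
The plan is to reduce the statement to a finite inspection, once it is checked that the six coordinates of $\Phi_n$ are genuine invariants of the root system $R(L)$. First I would verify that $\Phi_n(L)$ depends only on the isomorphism class of $R(L)$, hence, by Proposition~\ref{RXn}, only on $\overline L$. For $\vert R(L)\vert$ this is immediate. For the other coordinates, note that since every root has norm $2$, the Gram matrix $A=(\alpha_i\cdot\alpha_j)$ of a system of simple roots of $R(L)$ is exactly the Cartan matrix of $R(L)$ (because $\alpha_i\cdot\alpha_j = 2(\alpha_i\cdot\alpha_j)/(\alpha_j\cdot\alpha_j)$), and since any two systems of simple roots of $R(L)$ are conjugate under the Weyl group of $R(L)$, which acts orthogonally (see \cite[ch.~VI]{Bo}), the matrix $A$ is determined by the isomorphism type of $R(L)$ up to simultaneous permutation of rows and columns. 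Consequently $\mathrm{det}(A)$ and the multiset of row sums of $A$, and therefore the integers $n_l$, depend only on that isomorphism type.

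Next I would reformulate the $n_l$ combinatorially, which makes the verification transparent: the $i$-th row sum of an ADE Cartan matrix equals $2-\deg(i)$, where $\deg(i)$ is the number of edges at the vertex $\alpha_i$ of the Dynkin diagram, and since an ADE diagram is a disjoint union of trees with all vertices of degree at most $3$, these row sums lie in $\{2,1,0,-1\}$; thus $n_2, n_1, n_0, n_{-1}$ count respectively the isolated vertices (equivalently, the $\mathbf A_1$ components), the leaves, the degree-$2$ vertices and the branch points, and $n_2+n_1+n_0+n_{-1}$ equals the rank of $R(L)$. By Proposition~\ref{RXn}, for $n=23$ or $25$ the assignment $\overline L\mapsto R(L)$ identifies $X_n$ with the set of isomorphism classes of root systems arising as $R(L)$ with $L\in\mathcal L_n$; hence $\overline\Phi_n$ is injective if and only if the tuple $(\vert R(L)\vert, n_2,n_1,n_0,n_{-1}, \mathrm{det}(A))$ takes pairwise distinct values over this finite list.

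It remains to carry out that check. For $n=23$ the relevant list of the $32$ root systems is Table~\ref{RX23}, itself obtained from the Niemeier lattices via the construction of \cite[Annexe~B]{CL} and Lemma~\ref{Rr}; for $n=25$ it is Borcherds' classification of the $121$ classes of $\mathcal L_{25}$ and of their root systems \cite{Bor}. Using that $\vert\mathbf A_m\vert=m(m+1)$, $\vert\mathbf D_m\vert=2m(m-1)$, $\vert\mathbf E_6\vert=72$, $\vert\mathbf E_7\vert=126$, $\vert\mathbf E_8\vert=240$, that the Cartan determinants of $\mathbf A_m,\mathbf D_m,\mathbf E_6,\mathbf E_7,\mathbf E_8$ are $m+1,4,3,2,1$, and the vertex-degree counts recalled above — all of which are additive over disjoint unions, the determinant being multiplicative — one computes $\Phi_n$ on each class and checks that no two coincide. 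I expect this last step, for $n=25$ with its $121$ classes, to be the real (though entirely mechanical, and machine-checked) work; the only point that is not a pure formality is precisely that these six invariants turn out to separate all the classes, whereas a priori one might have feared that they would not.
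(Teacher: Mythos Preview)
Your argument is correct and follows essentially the same approach as the paper: establish that the six coordinates of $\Phi_n$ depend only on the isomorphism class of $R(L)$ (hence on $\overline L$), interpret $n_2,n_1,n_0,n_{-1}$ via vertex degrees in the Dynkin diagram, and then verify injectivity by a finite inspection of the tabulated root systems. The paper's proof is terser on the well-definedness step and simply refers to the tables \ref{rX23} and \ref{rX25} for the explicit values $\phi_i$, but the underlying strategy is the same as yours.
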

\begin{proof}
Par définition du système de racines $R(L)$ d'un réseau $L$, il est immédiat que éléments isomorphes de $\mathcal{L}_n$ ont même image par $\Phi_n$, ce qui prouve que $\overline{\Phi}_n$ est bien définie.

Pour l'injectivité de $\overline{\Phi}_n$, il suffit de vérifier que deux éléments distincts dans $X_n$ ont des images distinctes par $\overline{\Phi}_n$, ce qui se fait facilement à la main (comme on connaît déjà les classes d'isomorphisme des systèmes de racines de tous les éléments de $X_n$). Les tables \ref{rX23} et \ref{rX25} donnent pour tout élément $\overline{L_i}$ de $X_n$ la classe $R_i$ du système de racines de $L_i$, ainsi que l'image $\phi_i$ de $\overline{L_i}$ par $\overline{\Phi}_n$.
\end{proof}

Notons que les quantités $n_2,n_1,n_0,n_{-1}$ se comprennent très bien grâce au diagramme de Dynkin de $R(L)$. Il s'agit respectivement du nombre de sommet possédant aucun, un, deux ou trois sommets qui lui sont connexes. L'intérêt de la définition des $n_l$ donnée à la proposition \ref{Phi} est qu'elle indique la manière dont on les calcule dans nos algorithmes.\\

D'après les propositions \ref{RXn} et \ref{Phi}, pour déterminer la classe d'isomorphisme d'un élément de $\mathcal{L}_n$, il suffit de déterminer son image par $\Phi_n$. On souhaite donc déterminer l'image par $\Phi_n$ d'un $2$-voisin d'un réseau $L$. On utilise pour cela le lemme suivant :
\begin{lem} \label{Lx} On fixe $n=23$ ou $25$. Donnons-nous $L\in \mathcal{L}_n$, et considérons $x$ la droite isotrope de $L/2L$ engendrée par $v\in L$. On note $L_x$ le $2$-voisin de $L$ associé à la droite $x$ suivant \cite[ch.III, proposition 1.4]{CL}. On suppose enfin que l'on possède une base $\{a_1,\dots,a_n\}$ de $L$.
\\ \indent Si l'on se donne $j\in \{1,\dots,n\}$ tel que $(a_j\cdot v)\equiv 1\mathrm{\ mod\ }2$, alors la famille :
$$\{(a_j\cdot v)\, a_i - (a_i\cdot v)\, a_j \vert i\in \{1,\dots ,n\} \cup \{2\, a_i \vert i\in \{1,\dots,n\} \} \cup \left\{ \frac{1}{2}\left( v-\frac{(v\cdot v)}{2}a_j\right) \right\}$$
engendre $\Z$-linéairement $L_x$.
\end{lem}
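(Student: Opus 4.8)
The plan is to verify directly that the lattice $L_x$, as constructed in Proposition~\ref{dvoisin} (with $d=2$), coincides with the $\Z$-span of the proposed family. Recall that by the construction recalled in that proposition, if $x$ is the isotropic line of $L/2L$ generated by the image of $v\in L$ (with $v\cdot v\equiv 0\bmod 8$, which is automatic here since the class of $v$ is isotropic in $L/2L$ and one may adjust $v$ by $2L$), then $L_x = M + \Z\,\tfrac{v}{2}$, where $M$ is the preimage in $L$ of $x^\perp\subset L/2L$ under the reduction map $L\to L/2L$. So the proof splits into two independent tasks: (a) describe a $\Z$-basis, or at least a generating family, of $M$; (b) adjust the generator $\tfrac{v}{2}$ modulo $M$ so that it lands inside the span of the listed vectors, and conversely check each listed vector lies in $M+\Z\tfrac v2$.

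First I would handle $M$. Fix $j$ with $a_j\cdot v\equiv 1\bmod 2$; such a $j$ exists because the reduction of $v$ is nonzero in $L/2L$ (it generates a line), hence pairs nontrivially with some basis vector, and the pairing $L/2L\times L/2L\to\F_2$ induced by $x\mapsto q(x)$ together with $x\cdot y$ is nondegenerate on the even lattice $L$ modulo $2$ — more simply, the linear form $w\mapsto (w\cdot v)\bmod 2$ on $L/2L$ is nonzero, so it is nonzero on some $a_j$. Then $M = \{ w\in L \mid w\cdot v\equiv 0\bmod 2\}$ is the kernel of this linear form, an index-$2$ sublattice of $L$. I claim it is generated by the $n$ vectors $(a_j\cdot v)\,a_i - (a_i\cdot v)\,a_j$ for $i=1,\dots,n$ together with $2a_j$ (equivalently one may throw in all the $2a_i$, which only makes the family larger). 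Indeed each $(a_j\cdot v)a_i-(a_i\cdot v)a_j$ pairs with $v$ to give $(a_j\cdot v)(a_i\cdot v)-(a_i\cdot v)(a_j\cdot v)=0$, so lies in $M$; and $2a_j\in M$; conversely if $w=\sum_i c_i a_i\in M$, then since $a_j\cdot v$ is odd we may write, working in $L$, $ (a_j\cdot v)\,w = \sum_i c_i\big[(a_j\cdot v)a_i-(a_i\cdot v)a_j\big] + \big(\sum_i c_i(a_i\cdot v)\big)a_j$, and the coefficient $\sum_i c_i(a_i\cdot v)=w\cdot v$ is even, so the last term is an integer multiple of $2a_j$, giving $(a_j\cdot v)\,w$ in the span; since $a_j\cdot v$ is odd and $w\in M$ while $2a_j$ is in the span, a short congruence argument (write $a_j\cdot v = 1+2t$, so $w = (a_j\cdot v)w - 2t\,w$ and $2t\,w\in 2M\subset$ span) shows $w$ itself is in the span. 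This is the routine core and I would not belabor it.

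Next I would deal with the extra generator $\tfrac v2$. We have $L_x = M + \Z\,\tfrac v2$. Since $v\in L$ and $v\cdot v\in 2\Z$ with in fact $v\cdot v\equiv 0\bmod 8$ (the isotropy condition, after choosing the representative $v$ appropriately as in Proposition~\ref{dvoisin}), the vector $\tfrac14 (v\cdot v)\,a_j$ has even integer... more carefully: $\tfrac{v\cdot v}{2}\in\Z$ and $\tfrac14(v\cdot v)$ need not be an integer, so instead I would argue that $\tfrac12\big(v - \tfrac{v\cdot v}{2}a_j\big)$ differs from $\tfrac v2$ by $-\tfrac14(v\cdot v)a_j$... this is not in $L$ in general, so the right move is: $\tfrac12(v-\tfrac{v\cdot v}{2}a_j) = \tfrac v2 - \tfrac{v\cdot v}{4}a_j$ only makes sense if $4\mid v\cdot v$. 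Rather, observe $2\cdot\tfrac12(v-\tfrac{v\cdot v}{2}a_j) = v-\tfrac{v\cdot v}{2}a_j\in L$, and this element reduces mod $2L$ into $x^\perp$: its pairing with $v$ is $v\cdot v-\tfrac{v\cdot v}{2}(a_j\cdot v)$; since $v\cdot v\equiv 0\bmod 4$ in the relevant case this is even, so $v-\tfrac{v\cdot v}{2}a_j\in M$, hence $\tfrac12(v-\tfrac{v\cdot v}{2}a_j)\in M + \tfrac12 M\subset$ the lattice, and moreover $\tfrac12(v-\tfrac{v\cdot v}{2}a_j) \equiv \tfrac v2 \pmod{M}$ because their difference is $\tfrac{v\cdot v}{4}a_j\in\Z a_j\subset L$ — wait, one needs $4\mid v\cdot v$ again. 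The clean statement is: $\tfrac12(v - \tfrac{v\cdot v}{2}a_j)$ is an element of $L_x$ (as $v-\tfrac{v\cdot v}2 a_j\in M$ implies $\tfrac v2$-type generator works), and together with $M$ it generates $L_x$ since it equals $\tfrac v2$ minus an element of $L\subset M+\Z\tfrac v2$. I would write this out carefully, tracking the exact $2$-adic valuation of $v\cdot v$ as guaranteed by the voisin construction. The main obstacle — and the one place requiring genuine care rather than bookkeeping — is precisely this last normalization: showing that the half-integral generator can be taken in the stated symmetric form $\tfrac12(v-\tfrac{v\cdot v}2 a_j)$ while still, together with the explicit generators of $M$, recovering all of $L_x$ and nothing more; this hinges on the congruence $v\cdot v\equiv 0\bmod 8$ (equivalently $q(v)\equiv 0\bmod 4$) built into the definition of a $2$-neighbour, which guarantees both that $\tfrac12(v-\tfrac{v\cdot v}2 a_j)$ lies in $L_x$ and that it is congruent to $\tfrac v2$ modulo $M$. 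Once that is pinned down, equality of the two lattices follows by mutual inclusion, and the rank-$n$ and index-$2$ statements are immediate from the description of $M$ as the kernel of a surjective linear form $L\to\F_2$.
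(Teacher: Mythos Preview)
Your treatment of $M$ is essentially the paper's, only spelled out more explicitly; the paper simply observes that the images of the vectors $(a_j\cdot v)\,a_i-(a_i\cdot v)\,a_j$ span $x^\perp$ in $L/2L$, so together with all the $2a_i$ they generate its preimage $M$.

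The genuine gap is in your handling of the half-integral generator. You assume $v\cdot v\equiv 0\bmod 8$, claiming this ``is automatic here since the class of $v$ is isotropic in $L/2L$ and one may adjust $v$ by $2L$''. Neither clause saves you. Isotropy of $x$ only says $q(v)\equiv 0\bmod 2$, i.e.\ $v\cdot v\equiv 0\bmod 4$; it need not be $\equiv 0\bmod 8$. And you are not free to replace $v$ by $v+2w$: the lemma's family is built from the given $v$, so changing $v$ changes the statement you are trying to prove. Your whole final paragraph then oscillates around the difficulty that $\tfrac{v\cdot v}{4}a_j$ may not lie in $L$ (or in $M$), and this difficulty is real when $v\cdot v\equiv 4\bmod 8$.

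The paper sidesteps this entirely: it does not compare $\tfrac12(v-\tfrac{v\cdot v}{2}a_j)$ with $\tfrac{v}{2}$, but instead sets $v'=v-\tfrac{v\cdot v}{2}a_j$ and checks that $v'$ itself is an admissible representative for the neighbour construction. Since $\tfrac{v\cdot v}{2}$ is even, $v'\equiv v\bmod 2L$, so $v'$ still generates $x$. And a direct computation gives
\[
v'\cdot v' \;=\; (v\cdot v)\bigl(1-(a_j\cdot v)\bigr)+\tfrac{(v\cdot v)^2}{4}(a_j\cdot a_j),
\]
where $1-(a_j\cdot v)$ is even (because $a_j\cdot v$ is odd) and $a_j\cdot a_j$ is even (because $L$ is even); since $v\cdot v\equiv 0\bmod 4$, both terms are $\equiv 0\bmod 8$. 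Thus $v'\cdot v'\equiv 0\bmod 8$ for \emph{any} $v$ generating $x$, and $L_x=M+\Z\,\tfrac{v'}{2}$ by the neighbour construction. That is the missing step; once you insert it, the rest of your argument goes through without the unjustified hypothesis.
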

\begin{proof}
On reprend la construction faite dans \cite{CL}. Le réseau $L_x$ est engendré par le réseau $M$ (où $M$ est l'image réciproque de $x^\perp$ par la projection $L\rightarrow L/2L$) et par le vecteur $\frac{1}{2}v'$ (où $v'$ est un élément de $L$ vérifiant $(v'\cdot v')\equiv 0\mathrm{\ mod\ }8$, dont l'image dans $L/2L$ engendre $x$).

Le premier point à vérifier est l'existence de l'entier $j$ du lemme. Celle-ci provient de la non-dégénérescence du produit scalaire sur $L$, et le fait que $v\notin 2L$ (comme $v$ engendre $x$).

Il suffit ensuite de constater que l'image de la famille $\left\{(a_j\cdot v)\, a_i - (a_i\cdot v)\, a_j \vert i\in \{1,\dots ,n\}\right\}$ par la projection $L\rightarrow L/2L$ engendre bien bien $x^\perp$ (et donc que $\{2\, a_i \vert i\in \{1,\dots,n\} \} \cup \left\{(a_j\cdot v)\, a_i - (a_i\cdot v)\, a_j \vert i\in \{1,\dots ,n\}\right\} $ engendre bien $M$), et que le vecteur $v'=v-\frac{(v\cdot v)}{2}a_j$ satisfait bien les conditions voulues, ce que l'on vérifie facilement.
\end{proof}

\indent Donnons nous $\{a_1,\dots,a_n\}$ une base d'un réseau $L\in \mathcal{L}_n$, ainsi qu'une droite isotrope $x\in C_L(\F_2)$ engendré par un vecteur $v\in L$. Si l'on note $L_x$ le $2$-voisin de $L$ associé à $x$, alors l'algorithme qui donne $\Phi_n(L_x)$ se fait selon les étapes suivantes:

\textbf{Première étape :} grâce au lemme \ref{Lx}, on possède une famille $\Z$-génératrice de $L_x$.

\textbf{Deuxième étape :} la fonction {\rm{qflll}} de {\rm{PARI}} nous donne, à partir de la famille génératrice précédente, une base $\{b_1,\dots,b_n\}$ de $L_x$, ainsi que la matrice de Gram associée $\widetilde{B}=(b_i\cdot b_j)_{i,j}$.

\textbf{Troisième étape :} grâce à la base $\{b_i \}$ et à la matrice $\widetilde{B}$, la fonction {\rm{qfminim}} de {\rm{PARI}} nous donne l'ensemble $R$ des racines de $L_x$, ainsi qu'un système $R^+$ de racines positives.

\textbf{Quatrième étape :} en posant $\rho = \frac{1}{2}\sum_{\beta \in R^+} \beta$, on déduit le système de racines simples suivant $\{\beta_1 ,\dots ,\beta_k\} = \{\beta \in R^+ \vert (\rho\cdot \beta)=1 \}$, ainsi que la matrice de Cartan associée $B=(\beta_i \cdot \beta_j)_{i,j}$.

\textbf{Cinquième étape :} grâce au cardinal de $R$ ainsi qu'à la matrice $B$, on déduit $\Phi_n(L_x)$.

\subsection{Présentation de l'algorithme de calcul de la matrice de $\mathrm{T}_2$ sur $\Z[X_{n}]$.}\label{3.3}
On aura besoin de parcourir, pour $L\in \mathcal{L}_{n}$, tous les éléments de $C_L(\F_2)$, ce que l'on fera à l'aide d'une $\Z$-base $a_1,\dots ,a_n$ de $L$ et du lemme évident suivant :
\begin{lem} \label{Vn} Soient $L\in \mathcal{L}_n$, et $a_1,\dots,a_n$ une $\Z$-base arbitraire de $L$. On définit l'ensemble $V_n(a_1,\dots,a_n)=\left\{ (v_1,\dots,v_n)\in \{0,1\}^n \setminus \{0\} \, \vert \, q\left( \sum_k v_k\, a_k \right) \equiv 0\mathrm{\ mod\ }2 \right\}$. Alors l'application :
$$\begin{array}{rcl}
V_n(a_1,\dots,a_n) & \rightarrow & C_L(\F_2)\\
(v_1,\dots,v_n) & \mapsto & \mathrm{vect}_{\Z/2} (\sum_k v_k\, a_k)\\
\end{array},$$
est une bijection.
\end{lem}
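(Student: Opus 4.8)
The statement is essentially a bookkeeping identity about isotropic lines in $L/2L = L \otimes \F_2$, so the plan is to make the map explicit and check it is well-defined, injective, and surjective by a direct dimension count. First I would recall that a \emph{droite} in $L/2L$ here means a free $\Z/2$-submodule of rank $1$, i.e.\ simply a nonzero vector of the $\F_2$-vector space $L/2L$ (since over $\F_2$ a line is just a nonzero vector, there being no distinction between a vector and the line it spans up to the scalar $1$). Likewise $C_L(\F_2)$ is the set of such lines that are isotropic for the quadratic form induced by $q$ on $L/2L$: indeed $q$ takes well-defined values in $\Z/2\Z$ on $L/2L$ because $L$ is even, so ``$x$ isotropic'' means $q(x) \equiv 0 \bmod 2$ for $x \in L/2L$.

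The core of the argument is then the elementary linear-algebra fact that reduction mod $2$ gives an isomorphism of $\F_2$-vector spaces $L/2L \xrightarrow{\ \sim\ } \F_2^n$ sending $\sum_k v_k a_k \bmod 2L$ to $(v_1,\dots,v_n)$, once a $\Z$-basis $a_1,\dots,a_n$ of $L$ is fixed. I would spell this out: every class in $L/2L$ has a unique representative of the form $\sum_k v_k a_k$ with $v_k \in \{0,1\}$, because $\{a_1,\dots,a_n\}$ is a $\Z$-basis, so $L/2L = \bigoplus_k (\Z/2\Z)\,\overline{a_k}$ with the $\overline{a_k}$ forming an $\F_2$-basis. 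Under this identification, the nonzero classes correspond exactly to the nonzero tuples $(v_1,\dots,v_n) \in \{0,1\}^n \setminus \{0\}$. The quadratic form transports as $q\!\left(\sum_k v_k a_k \bmod 2L\right) = q\!\left(\sum_k v_k a_k\right) \bmod 2\Z$, which depends only on the tuple, so the isotropic nonzero classes correspond precisely to the tuples lying in $V_n(a_1,\dots,a_n)$. This simultaneously shows the map is well-defined (the assignment $(v_k) \mapsto \mathrm{vect}_{\Z/2}(\sum_k v_k a_k)$ lands in $C_L(\F_2)$ exactly because of the congruence condition cutting out $V_n$), injective (distinct tuples in $\{0,1\}^n$ give distinct classes, hence distinct lines since over $\F_2$ a line determines its unique nonzero vector), and surjective (any isotropic line is spanned by a unique nonzero class, which has a unique $\{0,1\}$-representative tuple, necessarily in $V_n$).

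There is no real obstacle here — the lemma is labelled ``évident'' for good reason — and the only point deserving a sentence of care is that over $\F_2$ the span map from nonzero vectors to lines is a \emph{bijection} rather than merely a surjection (a line $\{0, x\}$ has exactly one nonzero element $x$), which is what makes the correspondence between $V_n$ and $C_L(\F_2)$ a bijection and not just a surjection with fibers of size $|\F_2^\times| = 1$. I would therefore structure the write-up as: (1) identify $L/2L$ with $\F_2^n$ via the basis and note $q$ descends to $\Z/2\Z$ by evenness; (2) observe nonzero vectors $\leftrightarrow$ lines over $\F_2$; (3) match the isotropy condition with membership in $V_n$; (4) conclude bijectivity. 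The whole proof is a couple of lines, consistent with the ``lemme évident'' label.
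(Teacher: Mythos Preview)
Your proposal is correct and spells out exactly the natural argument; the paper itself introduces this lemma as ``le lemme \'evident suivant'' and gives no proof whatsoever, so your write-up simply makes explicit the obvious identification $L/2L \simeq \F_2^n$ via the basis together with the fact that over $\F_2$ nonzero vectors and lines coincide.
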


\subsubsection{L'algorithme de calcul de la matrice de $\mathrm{T}_2$ sur $\Z[X_{23}]$.}
Notons $R_1,\dots,R_{32}$ les classes d'isomorphisme des systèmes de racines des éléments de $\mathcal{L}_{23}$ (suivant la numérotation de la table \ref{rX23}). Pour $i\in \{1,\dots,32\}$, on pose de plus $\phi_i$ l'image par $\Phi_{23}$ (introduit à la proposition \ref{Phi}) d'un réseau $L\in \mathcal{L}_{23}$ tel que $R(L)\simeq R_i$, et on note $\overline{L}_i$ la classe de $L$ dans $X_{23}$. On donne dans la table \ref{rX23} la numérotation choisie pour les éléments de $X_{23}$, en donnant les valeurs de $R_i$ et de $\phi_i$ en fonction de $i$.

On souhaite déterminer la matrice $T_{23}\in \mathrm{M}_{32}(\Z)$ de l'opérateur $\mathrm{T}_2$ sur $\Z[X_{23}]$ dans la base $(\overline{L}_1,\dots,\overline{L}_{32})$. Pour cela, on cherche pour tout $i\in \{1,\dots,32\}$ un réseau $L\in \mathcal{L}_{23}$ tel que $R(L)\simeq R_i$, et on le munit d'une base $a_1,\dots ,a_{23}$ (en pratique, il s'agira de la base fournie par la fonction {\rm{qflll}} de {\rm{PARI}}). On construit ensuite tous les éléments $x\in C_L(\F_2)$ (grâce au lemme \ref{Vn}), et pour chaque $x$ on détermine la classe d'isomorphisme du $2$-voisin $L_x$ de $L$ associé à $x$ (grâce à l'algorithme décrit au paragraphe \ref{3.2}).

Soulignons qu'il n'est pas évident pour tout $i$ de construire un réseau de la forme précédente. Cependant, la connexité du graphe de Kneser $\mathrm{K}_{23}(2)$ (d'après la proposition \ref{kneserconn}) nous dit qu'il suffit en fait d'avoir un seul réseau $L\in \mathcal{L}_{23}$. En effet, en parcourant les $2$-voisins de $L$, on parcourra des éléments de $\mathcal{L}_{23}$ correspondant à d'autres classes d'isomorphisme dans $X_{23}$. En répétant le processus, on parcourra toutes les classes d'isomorphisme de $X_{23}$, comme le graphe $\mathrm{K}_{23}(2)$ est connexe.

Le réseau que l'on a utilisé comme point de départ est donné par le lemme suivant :
\begin{lem} On se place dans $\R^{24}$ muni de sa base canonique $(e_i)$, et on considère le réseau $M$ engendré par : $\{e_i\pm e_{i+1} \,\vert\, 1\leq i\leq 21\}\cup \{e_{23}-e_{24}\}$.

Le réseau $L=M+\frac{1}{2}\Z\left( \sum_{i=1}^{24} e_i\right)$ est un élément de $\mathcal{L}_{23}$, et vérifie $R(L)\simeq \mathbf{D}_{22}\coprod \mathbf{A}_1$.
\end{lem}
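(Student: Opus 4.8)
The goal is to show that the lattice $L=M+\frac{1}{2}\Z\left(\sum_{i=1}^{24}e_i\right)$ lies in $\mathcal{L}_{23}$ — i.e.\ it is an even lattice of rank $23$ and determinant $2$ sitting in a $23$-dimensional euclidean space — and that its root system is $\mathbf{D}_{22}\coprod\mathbf{A}_1$. First I would record the structure of $M$: the vectors $\{e_i\pm e_{i+1}\mid 1\le i\le 21\}$ all lie in the hyperplane $\{(x_i)\mid \sum_{i=1}^{22}x_i\equiv 0,\ x_i=0\text{ for }i\ge 23\}$ only after checking spans, so more carefully I would show that $M=\mathrm{D}_{22}\oplus(\Z(e_{23}-e_{24}))$ where $\mathrm{D}_{22}$ sits on coordinates $1,\dots,22$: the $42$ vectors $e_i\pm e_{i+1}$ for $1\le i\le 21$ generate exactly the even-sum sublattice of $\Z^{22}$ (a standard fact — consecutive differences and sums generate $\mathrm{D}_m$), and $e_{23}-e_{24}$ generates an $\mathrm{A}_1$ on the last two coordinates. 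Hence $M$ has rank $23$, is even, and has determinant $4\cdot 2=8$ (determinant $4$ for $\mathrm{D}_{22}$, determinant $2$ for $\mathrm{A}_1$).

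\textbf{The lattice $L$ and its invariants.} Next I would check that $w=\frac{1}{2}\sum_{i=1}^{24}e_i$ lies in the same $24$-dimensional ambient space but that $L$ is still of rank $23$: since $e_{23}-e_{24}\in M$, the vector $2w-\big(\text{something in }\mathrm{D}_{22}\big)$ has equal last two coordinates, so $w$ lies in the $\R$-span of $M$ (the span of $M$ is the hyperplane $x_{23}=x_{24}$ inside $\R^{24}$), and $L\subset \mathrm{span}(M)$ has rank $23$. Then $[L:M]=2$ because $2w\in M$ but $w\notin M$ (the coordinates of $w$ are not integers), so $\det(L)=\det(M)/4=8/4=2$. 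Evenness of $L$: a general element is $m+\varepsilon w$ with $m\in M$, $\varepsilon\in\{0,1\}$; for $\varepsilon=0$ it is clear, and for $\varepsilon=1$ I compute $q(m+w)=q(m)+m\cdot w+q(w)$, where $q(w)=\frac{1}{2}\cdot 24\cdot\frac14=3\in\Z$ and $m\cdot w=\frac12\sum(m\text{-coordinates})$; writing $m=(d,c(e_{23}-e_{24}))$ with $d\in\mathrm{D}_{22}$ of even coordinate sum and $c\in\Z$, one gets $m\cdot w=\frac12\sum_{i=1}^{22}d_i\in\Z$. So $q(m+w)\in\Z$; refining the parity argument (tracking $q(m)\bmod 2$ against $m\cdot w\bmod 2$) shows $q(m+w)\in 2\Z$ as well. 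This places $L\in\mathcal{L}_{23}$.

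\textbf{The root system.} For $R(L)=\{x\in L\mid x\cdot x=2\}$ I would split by whether $x\in M$ or $x\in L\setminus M$. The roots inside $M$ are exactly $R(\mathrm{D}_{22})\coprod R(\mathrm{A}_1)=\mathbf{D}_{22}\coprod\mathbf{A}_1$, the $2\cdot\binom{22}{2}\cdot 2 = 968$ vectors $\pm e_i\pm e_j$ ($1\le i<j\le 22$) together with $\pm(e_{23}-e_{24})$. For $x=m+w\in L\setminus M$ with norm $2$: the coordinates of $x$ are half-integers (on positions $1,\dots,22$, equal values $\pm\frac12$ on positions $23,24$ up to the $\mathrm{A}_1$ shift), so $x\cdot x=\sum x_i^2$ is a sum of $24$ terms each $\equiv \frac14\pmod 1$ — actually each $x_i\in\frac12+\Z$, so $x_i^2\ge\frac14$ and $x\cdot x\ge 24\cdot\frac14=6>2$. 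Hence there are no roots outside $M$, and $R(L)=\mathbf{D}_{22}\coprod\mathbf{A}_1$ as claimed. The main point requiring care is the evenness verification for the glue vector $w$ modulo $2$ (not just modulo $1$), which is the only place a genuine congruence computation intervenes; everything else is bookkeeping about $\mathrm{D}_m$ and $\mathrm{A}_1$ and an easy lower bound on norms of half-integral vectors. $\qed$
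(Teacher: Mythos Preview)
Your identification of the real span of $M$ contains a sign error that is not harmless. The vector $e_{23}-e_{24}$ spans the line $\{x_{23}=-x_{24}\}$, so together with the $\mathrm{D}_{22}$ block on coordinates $1,\dots,22$ one gets $\mathrm{span}_\R(M)=\{x\in\R^{24}:x_{23}+x_{24}=0\}$, not $\{x_{23}=x_{24}\}$. Since $w=\tfrac12\sum_{i=1}^{24}e_i$ has $w_{23}+w_{24}=1$, the vector $w$ does \emph{not} lie in this hyperplane; in particular $2w=(1,\dots,1)$ is not in $M$ (its last two coordinates are $(1,1)$, not of the form $(c,-c)$), and $L=M+\Z w$ has rank $24$, not $23$. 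So the step ``$[L:M]=2$, hence $\det(L)=8/4=2$'' fails exactly where you labelled it as routine; in fact with the stated generators one finds $e_{23}+e_{24}=2w-(1,\dots,1,0,0)\in L$, whence $L=\mathrm{D}_{22}\oplus\mathrm{D}_2+\Z w$ is an even rank-$24$ lattice of determinant $4$.

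This is not really a flaw in your strategy but a typo in the statement you were asked to prove: replacing the generator $e_{23}-e_{24}$ by $e_{23}+e_{24}$ (or, equivalently, keeping it and gluing by $\tfrac12\bigl(\sum_{i=1}^{22}e_i+e_{23}-e_{24}\bigr)$) makes $\mathrm{span}(M)=\{x_{23}=x_{24}\}$, puts $w$ in that span with $2w\in M$, and then every line of your computation goes through verbatim. With that correction your argument is complete and substantially more detailed than the paper's own proof, which simply asserts the rank, the determinant and the explicit list of roots. Two small remarks on the rest: the norm-$\ge 6$ lower bound for vectors in the nontrivial coset $M+w$ is exactly the right way to rule out extra roots; and your ``refining the parity argument'' to get $q(m+w)\in 2\Z$ is superfluous, since ``even'' here means $x\cdot x\in 2\Z$, i.e.\ $q(x)\in\Z$, which you had already checked.
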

\begin{proof}
On vérifie facilement que $L$ est un $\Z$-module libre de rang $23$, et que son déterminant est $2$. Son système de racine est : $R(L) = \{ \pm e_i \pm e_j \vert 1\leq i<j\leq 22 \} \cup \{ \pm (e_{23}-e_{24}) \}$, qui vérifie bien $R(L)\simeq \mathbf{D}_{22}\coprod \mathbf{A}_1$.
\end{proof}

Expliquons en détail l'algorithme du calcul de $\mathrm{T}_2$ sur $\Z[X_{23}]$. Pour $i\in \{1,\dots,32\}$, nous allons définir des réseaux $L_i\in \mathcal{L}_n$ satisfaisant $R(L_i)\simeq R_i$ (suivant la numérotation de la table \ref{rX23}), munis d'une base arbitraire $a_{i,1},\dots,a_{i,23}$ (en pratique, il s'agira de la base fournie par la fonction {\rm{qflll}} de {\rm{PARI}}). Les coefficients de la matrice $T_{23}$ se déduiront des applications $t_i:V_{23}(a_{i,1},\dots,a_{i,23}) \rightarrow \{1,\dots,32\}$ définies ci-dessous.

On pose $L_1$ le réseau donné par le lemme précédent, et on pose $a_{1,1},\dots,a_{1,23}$ la base de $L_1$ fournie par {\rm{PARI}}. On définit comme suit la fonction $t_1 : V_{23}(a_{1,1},\dots,a_{1,23})\rightarrow \{1,\dots,32\}$.

Soit $(v_1,\dots,v_{23})\in V_{23}(a_{1,1},\dots,a_{1,23})$. On pose $x=\mathrm{vect}_{\Z/2}(\sum_k v_k \, a_{1,k})\in C_{L_1}(\F_2)$. On reprend la construction du $2$-voisin $L_x$ de $L_1$ associé à $x$ dans la proposition \ref{Lx} : il existe un unique $i\in \{1,\dots ,32\}$ tel que $R(L_x)\simeq R_i$ (à savoir l'unique $i$ tel que $\Phi_{23}(L_x)=\phi_i$). On pose alors : $t_1((v_1,\cdots,v_{23}))=i$.

Pour tout $i\in t_1(V_{23}(a_{1,1},\dots,a_{1,23}))$, on fixe un élément $(v_1,\dots,v_{23})\in t_1^{-1}(\{i\})$ quelconque. Notons comme précédemment $x=\mathrm{vect}_{Z/2}(\sum_k v_k a_{1,k})$ et $L_x$ le $2$-voisin de $L_1$ associé à $x$ : on pose $a_{i,1},\dots,a_{i,23}$ la base de $L_x$ fournie par {\rm{PARI}}, et on définit le réseau $L_i=L_x$.

Pour tous les réseaux $L_i\in \mathcal{L}_{23}$ ainsi définis, muni de leur base $a_{i,1},\dots,a_{i,23}$, on construit de même que pour $L_1$ la fonction $t_i : V_{23}(a_{i,1},\dots,a_{i,23})\rightarrow \{1,\dots,32\}$. De plus, pour $j\in t_i(V_{23}(a_{i,1},\dots,a_{i,23}))$, si l'on n'a pas précédemment défini de réseau $L_j$, on choisit un élément quelconque $(v_1,\dots,v_{23})\in t_i^{-1}(\{j\})$ : on note à nouveau $x=\mathrm{vect}_{Z/2}(\sum_k v_k a_{i,k})$ et $L_x$ le $2$-voisin de $L_i$ associé à $x$, et on pose $a_{j,1},\dots,a_{j,23}$ la base de $L_x$ fournie par {\rm{PARI}}, ainsi que $L_j=L_x$.

On répète le processus jusqu'à avoir construit pour tout $i\in \{1,\dots,32\}$ un réseau $L_i\in \mathcal{L}_{23}$ de base $a_{i,1},\dots,a_{i,23}$ tel que $\Phi_{23}(L_i)=\phi_i$, ainsi qu'une fonction $t_i : V_{23}(a_{i,1},\dots,a_{i,23})\rightarrow \{1,\dots,32\}$. 

Cet algorithme se termine bien, comme le graphe de Kneser $\mathrm{K}_{23}(2)$ est connexe d'après la proposition \ref{kneserconn}.

\begin{prop} On pose $T_{23}=(t_{i,j})\in \mathrm{M}_{32}(\Z)$, c'est-à-dire que $t_{i,j}$ est le nombre de $2$-voisins de $L_j$ isomorphes à $L_i$. Avec les notations précédentes, on a l'égalité :
$$t_{i,j} = \vert t_j^{-1}(\{i\})\vert .$$
\end{prop}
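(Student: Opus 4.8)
The plan is simply to chain together bijections that have already been established in the excerpt. By the definition of the Hecke operator $\mathrm{T}_2$ and of its matrix in the basis $(\overline{L}_1,\dots,\overline{L}_{32})$, the coefficient $t_{i,j}$ is the number of $2$-voisins of $L_j$ that are isomorphic to $L_i$; I want to match this quantity against the cardinality of the fibre $t_j^{-1}(\{i\})$.

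First I would unwind the definition of $t_j$. It is defined on $V_{23}(a_{j,1},\dots,a_{j,23})$ by $t_j(v_1,\dots,v_{23}) = \ell$, where $\ell$ is the unique index with $\Phi_{23}(L_x) = \phi_\ell$, for $x = \mathrm{vect}_{\Z/2}(\sum_k v_k\, a_{j,k}) \in C_{L_j}(\F_2)$ and $L_x$ the $2$-voisin of $L_j$ associated to $x$, built via Lemma \ref{Lx}. Since $\overline{\Phi}_{23}$ is injective on $X_{23}$ (Proposition \ref{Phi}) and any lattice $L$ with $R(L) \simeq R_i$ satisfies $\Phi_{23}(L) = \phi_i$, the equality $\Phi_{23}(L_x) = \phi_i$ is equivalent to $R(L_x) \simeq R_i$, hence by Proposition \ref{RXn} to $L_x \simeq L_i$. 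Thus
\[
t_j^{-1}(\{i\}) = \{\, (v_1,\dots,v_{23}) \in V_{23}(a_{j,1},\dots,a_{j,23}) \ \vert\ L_x \simeq L_i \,\}.
\]

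Next I would invoke the two bijections. Lemma \ref{Vn} identifies $V_{23}(a_{j,1},\dots,a_{j,23})$ with $C_{L_j}(\F_2)$ via $(v_1,\dots,v_{23}) \mapsto x$, and Proposition \ref{dvoisin} identifies $C_{L_j}(\F_2)$ with the set $\mathrm{Vois}_2(L_j)$ of $2$-voisins of $L_j$ via $x \mapsto L'(x)$; moreover Lemma \ref{Lx} guarantees that the lattice $L_x$ used to define $t_j$ is exactly $L'(x)$. Composing, $(v_1,\dots,v_{23}) \mapsto L_x$ is a bijection from $V_{23}(a_{j,1},\dots,a_{j,23})$ onto $\mathrm{Vois}_2(L_j)$, and it carries $t_j^{-1}(\{i\})$ onto $\{\, L' \in \mathrm{Vois}_2(L_j) \ \vert\ L' \simeq L_i \,\}$. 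Hence $\vert t_j^{-1}(\{i\}) \vert$ equals the number of $2$-voisins of $L_j$ isomorphic to $L_i$, which is $t_{i,j}$; this proves the equality.

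There is no genuine obstacle here: the proposition is a bookkeeping assembly of Lemma \ref{Vn}, Lemma \ref{Lx}, Proposition \ref{dvoisin}, Proposition \ref{Phi} and Proposition \ref{RXn}. The only step deserving a moment of care is checking that the $2$-voisin produced by the generating family of Lemma \ref{Lx} (after replacing $v$ by $v' = v - \frac{(v\cdot v)}{2}\, a_j$ to meet the congruence condition modulo $8$) coincides with $L'(x)$ of Proposition \ref{dvoisin} — but that is precisely the content of Lemma \ref{Lx}, so nothing new is required.
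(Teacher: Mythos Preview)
Your proof is correct and follows essentially the same approach as the paper: both establish the chain of equivalences $L_x\simeq L_i \Leftrightarrow R(L_x)\simeq R_i \Leftrightarrow \Phi_{23}(L_x)=\phi_i \Leftrightarrow t_j((v_1,\dots,v_{23}))=i$, and then use the bijection between $V_{23}(a_{j,1},\dots,a_{j,23})$ and $C_{L_j}(\F_2)$ (and hence $\mathrm{Vois}_2(L_j)$) to identify $\vert t_j^{-1}(\{i\})\vert$ with the number of $2$-voisins of $L_j$ isomorphic to $L_i$. Your write-up is simply more explicit about which lemmas and propositions are being invoked at each step.
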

\begin{proof}
Soient $j\in \{1,\dots ,32\}$ et $(v_1,\dots,v_{23})\in V_{23}(a_{j,1},\dots,a_{j,23})$. On pose comme précédemment $v=\sum_k v_k\, a_{j,k}$, $x=\mathrm{vect}_{Z/2}(v)$ et $L_x$ le $2$-voisin de $L_j$ associé à $x$. On a les équivalence suivantes :
$$L_x\simeq L_i\ \Leftrightarrow \ R(L_x)\simeq R(L_i)\simeq R_i\ \Leftrightarrow \ \Phi_{23}(L_x) = \phi_i \ \Leftrightarrow \ t_j((v_1,\dots,v_{23}))=i$$
et donc :
$$t_{i,j}=\big\vert \{x\in C_{L_j}(\F_2) \vert L_x\simeq L_i \} \big\vert = \vert t_j^{-1}(\{i\})\vert $$
qui est l'égalité cherchée.
\end{proof}

On renvoie à \cite{MegT2X23} pour un algorithme détaillé du calcul de $T_{23}$. Dans cet algorithme, les fichiers \textit{``generateursX23Li"} sont situés dans le dossier parent : ils contiennent des bases des réseaux $L_i$ que l'on a utilisés pour notre algorithme.

\subsubsection{L'algorithme de calcul de la matrice de $\mathrm{T}_2$ sur $\Z[X_{25}]$.}
On reprend la liste de \cite{Bor} pour l'ensemble des classes de $X_{25}$, et on note $R_1,\dots,R_{121}$ les classes d'isomorphisme des systèmes de racines des éléments de $\mathcal{L}_{25}$ (suivant la numérotation de \cite{Bor}, reprise dans la table \ref{rX25}). Comme dans le cas de $X_{23}$, pour $i\in \{1,\dots,121\}$, on pose de plus $\phi_i$ l'image par $\Phi_{25}$ (introduit à la proposition \ref{Phi}) d'un réseau $L\in \mathcal{L}_{25}$ tel que $R(L)\simeq R_i$, et on note $\overline{L}_i$ la classe de $L$ dans $X_{25}$. On donne dans la table \ref{rX25} la numérotation choisie pour les éléments de $X_{25}$, en donnant les valeurs de $R_i$ et de $\phi_i$ en fonction de $i$.

On souhaite déterminer la matrice $T_{25}\in \mathrm{M}_{121}(\Z)$ de l'opérateur $\mathrm{T}_2$ sur $\Z[X_{25}]$ dans la base $(\overline{L}_1,\dots,\overline{L}_{121})$. Pour cela, on cherche pour tout $i\in \{1,\dots,121\}$ un réseau $L\in \mathcal{L}_{25}$ tel que $R(L)\simeq R_i$, et on le munit d'une base $a_1,\dots ,a_{25}$ (en pratique, il s'agira de la base fournie par la fonction {\rm{qflll}} de {\rm{qflll}}). On construit ensuite tous les éléments $x\in C_L(\F_2)$ (grâce au lemme \ref{Vn}), et pour chaque $x$ on détermine la classe d'isomorphisme du $2$-voisin $L_x$ de $L$ associé à $x$ (grâce à l'algorithme décrit au paragraphe \ref{3.2}).

Là encore il n'est pas évident pour tout $i$ de construire un réseau de la forme précédente. De la même manière que dans le cas de $X_{23}$, on utilise la connexité du graphe de Kneser $\mathrm{K}_{25}(2)$ (d'après la proposition \ref{kneserconn}). Suivant le raisonnement adopté précédemment, il suffit de construire un seul réseau $L\in \mathcal{L}_{25}$, puis de construire ses $2$-voisins, et de répéter le processus jusqu'à avoir parcouru tous les éléments de $X_{25}$.

Le réseau que l'on a utilisé comme point de départ est donné par le lemme suivant :
\begin{lem} On se place dans $\R^{26}$ muni de sa base canonique $(e_i)$, et on considère le réseau $M$ engendré par : $\{e_i\pm e_{i+1} \,\vert\, 1\leq i\leq 23\}\cup \{e_{25}-e_{26}\}$.

Le réseau $L=M+\frac{1}{2}\Z\left( \sum_{i=1}^{26} e_i\right)$ est un élément de $\mathcal{L}_{25}$, et vérifie $R(L)\simeq \mathbf{D}_{24}\coprod \mathbf{A}_1$.
\end{lem}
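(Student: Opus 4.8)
The statement asks to verify three things about the lattice $L = M + \frac{1}{2}\Z\left(\sum_{i=1}^{26} e_i\right)$, where $M$ is generated by $\{e_i \pm e_{i+1} \mid 1 \leq i \leq 23\} \cup \{e_{25} - e_{26}\}$: that $L \in \mathcal{L}_{25}$ (i.e. $L$ is even, free of rank $25$, with $\det(L) = 2$), and that $R(L) \simeq \mathbf{D}_{24} \coprod \mathbf{A}_1$. This is a direct computation entirely analogous to the $n=23$ case treated just above, so the plan is to imitate that proof. The main (very mild) obstacle is simply keeping track of indices and checking that the glue vector $e = \frac{1}{2}\sum_{i=1}^{26} e_i$ interacts correctly with $M$.

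\textbf{Step 1: identify $M$.} First I would observe that the vectors $\{e_i \pm e_{i+1} \mid 1 \leq i \leq 23\}$ generate the lattice $\{(x_i) \in \Z^{24} \times \{0\}^2 \mid \sum_{i=1}^{24} x_i \equiv 0 \bmod 2\}$ sitting in the first $24$ coordinates — this is a copy of $\mathrm{D}_{24}$ — since for $1 \leq i \leq 23$ the pairs $e_i + e_{i+1}$ and $e_i - e_{i+1}$ already give $2e_{24}$ wait, more carefully: they generate all $e_i - e_j$ and all $e_i + e_j$ with indices in $\{1,\dots,24\}$ together with the parity condition, which is exactly $\mathrm{D}_{24}$. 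Adjoining $e_{25} - e_{26}$ gives $M \simeq \mathrm{D}_{24} \oplus \mathrm{A}_1$ as an abstract lattice, of rank $25$, with $\det(M) = \det(\mathrm{D}_{24}) \cdot \det(\mathrm{A}_1) = 4 \cdot 2 = 8$.

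\textbf{Step 2: the overlattice $L$.} Next I would check that $e = \frac{1}{2}\sum_{i=1}^{26} e_i$ satisfies $2e \in M$ (clear, since $2e = \sum_{i=1}^{26} e_i = \sum_{i=1}^{24} e_i + (e_{25}+e_{26})$, and $\sum_{i=1}^{24}e_i \in \mathrm{D}_{24}$ has even coordinate sum, while $e_{25}+e_{26} = (e_{25}-e_{26}) + 2e_{26}$... one must be slightly careful: $2e_{26} \notin M$). So instead I would argue directly that $e \notin M \otimes \Q$-wise $e$ has half-integer coordinates in all $26$ slots, hence $e \notin M$, and $[L : M] = 2$; therefore $\det(L) = \det(M)/[L:M]^2 = 8/4 = 2$. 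To see $L$ is even and integral: $e \cdot e = \frac{1}{4} \cdot 26 = \frac{13}{2}$ — this is \emph{not} in $2\Z$, so I must instead take as the extra generator $e' = e - e_{26}$ or verify the correct coset; concretely one checks that $L$ is spanned by $M$ together with $\frac{1}{2}(\sum_{i=1}^{25} e_i - e_{26})$ or the analogous vector making $q$ integral — I would compute $q$ of the relevant glue vector and of its sums with generators of $M$, exactly as the $n=23$ lemma does (``on vérifie facilement''), to confirm $L \subset L^\sharp$ and $q(L) \subset \Z$, i.e. $L$ even.

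\textbf{Step 3: the root system.} Finally, I would compute $R(L) = \{x \in L \mid x \cdot x = 2\}$. Vectors of norm $2$ in $\Z^{26}$-coordinates are $\pm e_i \pm e_j$; intersecting with $L$: inside the $\mathrm{D}_{24}$ block one gets $\{\pm e_i \pm e_j \mid 1 \leq i < j \leq 24\} \simeq \mathbf{D}_{24}$; the vector $\pm(e_{25}-e_{26})$ gives an $\mathbf{A}_1$; the vectors $\pm(e_{25}+e_{26})$ and $\pm e_k \pm e_{25}$, $\pm e_k \pm e_{26}$ are \emph{not} in $L$ (they are in $L$ only if the corresponding half-sum condition holds, which it does not); and the half-integer vectors $\frac{1}{2}(\pm 1, \dots, \pm 1)$ have norm $\frac{26}{4} \neq 2$. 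Hence $R(L) = \{\pm e_i \pm e_j \mid 1 \leq i < j \leq 24\} \cup \{\pm(e_{25}-e_{26})\} \simeq \mathbf{D}_{24} \coprod \mathbf{A}_1$, which is the claim. The only place requiring genuine care is Step 2 — pinning down the exact glue vector so that $q$ takes integer (indeed even) values — but once the right representative of the nontrivial coset of $L/M$ is written down, every verification is a one-line arithmetic check, just as in the preceding lemma.
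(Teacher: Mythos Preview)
Your three-step plan (identify $M\simeq\mathrm{D}_{24}\oplus\mathrm{A}_1$, pass to an index-$2$ overlattice, then list the norm-$2$ vectors) is exactly the paper's route; its proof is the single sentence ``on v\'erifie facilement'' together with the explicit root set $R(L)=\{\pm e_i\pm e_j\mid 1\le i<j\le 24\}\cup\{\pm(e_{25}-e_{26})\}$, which is your Step~3.

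The worry you flag in Step~2 is real, but it points to a typo in the statement rather than a gap in your argument. The vector $e=\frac{1}{2}\sum_{i=1}^{26}e_i$ has $e\cdot e=13/2\notin\Z$, so $M+\Z e$ is not integral; moreover $e\notin M\otimes\Q$ (since $M\otimes\Q=\{x\in\Q^{26}\mid x_{25}+x_{26}=0\}$), so $M+\Z e$ actually has rank $26$. Your candidate replacement $e-e_{26}=\frac{1}{2}(1,\dots,1,-1)$ does land in $M\otimes\Q$ and satisfies $2(e-e_{26})\in M$, but it still has norm $13/2$, so that lattice is not integral either. The ``analogous vector making $q$ integral'' is not another representative of the same coset but a genuinely different glue: one must take $e'=\frac{1}{2}\sum_{i=1}^{24}e_i$, the half-sum over the $\mathrm{D}_{24}$ block only. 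Then $2e'\in\mathrm{D}_{24}\subset M$, $q(e')=3$, $e'\in M^\sharp$, and $M+\Z e'$ is even of rank $25$ with determinant $8/4=2$; since every vector of $e'+M$ has its first $24$ coordinates in $\tfrac{1}{2}+\Z$ and hence norm $\ge 6$, your Step~3 root enumeration goes through unchanged. With the upper summation index corrected from $26$ to $24$, your proof is complete and coincides with the paper's.
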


\begin{proof}
On vérifie facilement que $L$ est un $\Z$-module libre de rang $25$, et que son déterminant est $2$. Son système de racine est : $R(L) = \{ \pm e_i \pm e_j \vert 1\leq i<j\leq 24 \} \cup \{ \pm (e_{25}-e_{26}) \}$, qui vérifie bien $R(L)\simeq \mathbf{D}_{24}\coprod \mathbf{A}_1$.
\end{proof}

L'algorithme du calcul de $\mathrm{T}_2$ sur $\Z[X_{25}]$ suit le même processus que dans le cas de $X_{23}$. Pour $i\in \{1,\dots,121\}$, nous allons définir des réseaux $L_i\in \mathcal{L}_n$ satisfaisant $R(L_i)\simeq R_i$ (suivant la numérotation de la table \ref{rX25}), munis d'une base arbitraire $a_{i,1},\dots,a_{i,25}$ (en pratique, il s'agira de la base fournie par la fonction {\rm{qflll}} de {\rm{PARI}}). Les coefficients de la matrice $T_{25}$ se déduiront des applications $t_i:V_{25}(a_{i,1},\dots,a_{i,25}) \rightarrow \{1,\dots,121\}$ définies ci-dessous.

On pose $L_{121}$ le réseau donné par le lemme précédent, et on pose $a_{{121},1},\dots,a_{{121},25}$ la base de $L_{121}$ fournie par {\rm{PARI}}. On définit comme suit la fonction $t_{121} : V_{25}(a_{{121},1},\dots,a_{{121},25})\rightarrow \{1,\dots,121\}$.

Soit $(v_1,\dots,v_{25})\in V_{25}(a_{{121},1},\dots,a_{{121},25})$. On pose $x=\mathrm{vect}_{\Z/2}(\sum_k v_k \, a_{{121},k})\in C_{L_{121}}(\F_2)$. On reprend la construction du $2$-voisin $L_x$ de $L_{121}$ associé à $x$ dans la proposition \ref{Lx} : il existe un unique $i\in \{1,\dots ,121\}$ tel que $R(L_x)\simeq R_i$ (à savoir l'unique $i$ tel que $\Phi_{25}(L_x)=\phi_i$). On pose alors : $t_{121}((v_1,\cdots,v_{25}))=i$.

Pour tout $i\in t_1(V_{25}(a_{{121},1},\dots,a_{{121},25}))$, on fixe un élément $(v_1,\dots,v_{25})\in t_{121}^{-1}(\{i\})$ quelconque. Notons comme précédemment $x=\mathrm{vect}_{\Z/2}(\sum_k v_k a_{{121},k})$ et $L_x$ le $2$-voisin de $L_{121}$ associé à $x$ : on pose $a_{i,1},\dots,a_{i,25}$ la base de $L_x$ fournie par {\rm{PARI}}, et on définit le réseau $L_i=L_x$.

Pour tous les réseaux $L_i\in \mathcal{L}_{25}$ ainsi définis, muni de leur base $a_{i,1},\dots,a_{i,25}$, on construit de même que pour $L_{121}$ la fonction $t_i : V_{25}(a_{i,1},\dots,a_{i,25})\rightarrow \{1,\dots,121\}$. De plus, pour $j\in t_i(V_{25}(a_{i,1},\dots,a_{i,25}))$, si l'on n'a pas précédemment défini de réseau $L_j$, on choisit un élément quelconque $(v_1,\dots,v_{25})\in t_i^{-1}(\{j\})$ : on note à nouveau $x=\mathrm{vect}_{Z/2}(\sum_k v_k a_{i,k})$ et $L_x$ le $2$-voisin de $L_i$ associé à $x$, et on pose $a_{j,1},\dots,a_{j,25}$ la base de $L_x$ fournie par {\rm{PARI}}, ainsi que $L_j=L_x$.

On répète le processus jusqu'à avoir construit pour tout $i\in \{1,\dots,121\}$ un réseau $L_i\in \mathcal{L}_{25}$ de base $a_{i,1},\dots,a_{i,25}$ tel que $\Phi_{25}(L_i)=\phi_i$, ainsi qu'une fonction $t_i : V_{25}(a_{i,1},\dots,a_{i,25})\rightarrow \{1,\dots,121\}$. 

Cet algorithme se termine bien, comme le graphe de Kneser $\mathrm{K}_{25}(2)$ est connexe d'après la proposition \ref{kneserconn}.
\begin{prop} On pose $T_{25}=(t_{i,j})\in \mathrm{M}_{121}(\Z)$, c'est-à-dire que $t_{i,j}$ est le nombre de $2$-voisins de $L_j$ isomorphes à $L_i$. Avec les notations précédentes, on a l'égalité :
$$t_{i,j} = \vert t_j^{-1}(\{i\})\vert .$$
\end{prop}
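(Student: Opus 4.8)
Le plan est de reprendre, \emph{mutatis mutandis}, la démonstration de la proposition analogue sur $\Z[X_{23}]$ donnée plus haut, en y remplaçant $\{1,\dots,32\}$ par $\{1,\dots,121\}$, la dimension $23$ par $25$ et l'application $\Phi_{23}$ par $\Phi_{25}$. D'abord, on fixerait $j\in\{1,\dots,121\}$ et on reviendrait à la définition de $t_{i,j}$ comme le nombre de $2$-voisins de $L_j$ isomorphes à $L_i$ ; via la proposition \ref{dvoisin}, ceci est aussi le cardinal de l'ensemble $\{x\in C_{L_j}(\F_2)\,\vert\,L_x\simeq L_i\}$ des droites isotropes $x$ de $L_j/2L_j$ dont le $2$-voisin associé est isomorphe à $L_i$. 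Ensuite, on utiliserait le lemme \ref{Vn} pour identifier $C_{L_j}(\F_2)$ avec $V_{25}(a_{j,1},\dots,a_{j,25})$ via $(v_1,\dots,v_{25})\mapsto \mathrm{vect}_{\Z/2}(\sum_k v_k\,a_{j,k})$, de sorte que dénombrer les $x$ tels que $L_x\simeq L_i$ revient à dénombrer les $(v_1,\dots,v_{25})$ correspondants.

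Le point central serait alors la chaîne d'équivalences suivante, pour $(v_1,\dots,v_{25})\in V_{25}(a_{j,1},\dots,a_{j,25})$, d'image $x$ et de $2$-voisin associé $L_x$ construit via le lemme \ref{Lx} :
$$L_x\simeq L_i\ \Leftrightarrow\ R(L_x)\simeq R(L_i)\simeq R_i\ \Leftrightarrow\ \Phi_{25}(L_x)=\phi_i\ \Leftrightarrow\ t_j((v_1,\dots,v_{25}))=i.$$
La première équivalence n'est autre que la proposition \ref{RXn} appliquée en dimension $25$ ; la deuxième résulte de l'injectivité de $\overline{\Phi}_{25}$ établie à la proposition \ref{Phi}, jointe à l'égalité $\Phi_{25}(L_i)=\phi_i$ (vraie car $R(L_i)\simeq R_i$) ; la troisième est la définition même de $t_j$ rappelée avant l'énoncé, l'unicité du $i$ recherché provenant de ce que $\overline{L_x}\in X_{25}$ et que $\overline{\Phi}_{25}$ est bien définie. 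En sommant sur les droites isotropes de $L_j/2L_j$, on conclurait
$$t_{i,j}=\big\vert\{x\in C_{L_j}(\F_2)\,\vert\,L_x\simeq L_i\}\big\vert=\vert t_j^{-1}(\{i\})\vert.$$

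Il n'y a pas ici de réelle difficulté : tout le contenu géométrique est déjà contenu dans les propositions \ref{RXn} et \ref{Phi}, et l'argument restant est un simple dévissage des définitions, strictement parallèle au cas $n=23$. Le seul point à surveiller est la compatibilité entre la bijection du lemme \ref{Vn} et la construction $x\mapsto L_x$ du lemme \ref{Lx}, ainsi que la cohérence entre la numérotation des $R_i$ issue de \cite{Bor} (reprise dans la table \ref{rX25}) et les réseaux $L_i$ produits par l'algorithme --- cohérence assurée par construction, puisqu'on a imposé $\Phi_{25}(L_i)=\phi_i$ pour tout $i\in\{1,\dots,121\}$.
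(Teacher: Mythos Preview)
Your proposal is correct and follows essentially the same approach as the paper: fix $j$, identify $2$-voisins with isotropic lines via la proposition \ref{dvoisin} and le lemme \ref{Vn}, then run the chain of equivalences $L_x\simeq L_i \Leftrightarrow R(L_x)\simeq R_i \Leftrightarrow \Phi_{25}(L_x)=\phi_i \Leftrightarrow t_j((v_1,\dots,v_{25}))=i$ using propositions \ref{RXn} and \ref{Phi}. The paper's proof is in fact a near-verbatim copy of the $n=23$ case, and your write-up is slightly more detailed in its justification of each equivalence, but there is no substantive difference.
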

\begin{proof}
Soient $j\in \{1,\dots ,121\}$ et $(v_1,\dots,v_{25})\in V_{25}(a_{j,1},\dots,a_{j,25})$. On pose comme précédemment $v=\sum_k v_k\, a_{j,k}$, $x=\mathrm{vect}_{Z/2}(v)$ et $L_x$ le $2$-voisin de $L_j$ associé à $x$. On a les équivalence suivantes :
$$L_x\simeq L_i\ \Leftrightarrow \ R(L_x)\simeq R(L_i)\simeq R_i\ \Leftrightarrow \ \Phi_{25}(L_x) = \phi_i \ \Leftrightarrow \ t_j((v_1,\dots,v_{25}))=i$$
et donc :
$$t_{i,j}=\big\vert \{x\in C_{L_j}(\F_2) \vert L_x\simeq L_i \} \big\vert = \vert t_j^{-1}(\{i\})\vert $$
qui est l'égalité cherchée.
\end{proof}

On renvoie à \cite{MegT2X25} pour un algorithme détaillé du calcul de $T_{25}$. Dans cet algorithme, les fichiers \textit{``generateursX25Li"} sont situés dans le dossier parent : ils contiennent des bases des réseaux $L_i$ que l'on a utilisés pour notre algorithme.

\subsection{Résultats obtenus.}\label{3.4}
Les résultats obtenus grâce à nos algorithmes sont donnés par les théorèmes suivants :
\begin{theo} Soient $L_1,\dots,L_{32}\in \mathcal{L}_{23}$ vérifiant pour tout $i$ : $R(L_i)\simeq R_i$ (suivant les notations de la table \ref{rX23}), et soient $\overline{L}_1,\dots,\overline{L}_{32}$ leurs classes respectives dans $X_{23}$. Alors la matrice de l'opérateur $\mathrm{T}_2$ relativement à la base $\overline{L}_1,\dots,\overline{L}_{32}$ est donnée dans \cite{MegTp}.
\end{theo}
\begin{theo} Soient $L_1,\dots,L_{121}\in \mathcal{L}_{25}$ vérifiant pour tout $i$ : $R(L_i)\simeq R_i$ (suivant les notations de la table \ref{rX25}), et soient $\overline{L}_1,\dots,\overline{L}_{121}$ leurs classes respectives dans $X_{25}$. Alors la matrice de l'opérateur $\mathrm{T}_2$ relativement à la base $\overline{L}_1,\dots,\overline{L}_{121}$ est donnée dans \cite{MegTp}.
\end{theo}
\section{Applications.}\label{4}
\subsection{La codiagonalisation des matrices opérateurs $\mathrm{T}_p$.}\label{4.1}
Pour $n=23$ ou $25$, et $p$ premier, on note $T_n(p)$ la matrice de l'opérateur $\mathrm{T}_p$ sur $X_n$ relativement à la base des $\overline{L}_i$ introduite précédemment (suivant les numérotations des tables \ref{rX23} ou \ref{rX25}, selon la valeur de $n$). En particulier, on a : $T_{23}=T_{23}(2)$ et $T_{25}=T_{25}(2)$ (suivant les notations du paragraphe \ref{3.3}).

Notre point de départ est la proposition suivante :

\begin{prop}\label{codiag} Pour $n=23$ ou $n=25$, les opérateurs $\mathrm{T}_p:\C[X_n]\rightarrow \C[X_n]$ pour $p$ premier sont codiagonalisables. De manière équivalente, une fois $n$ fixé, les matrices $T_n(p)$ pour $p$ premier sont codiagonalisables dans $\C$.
\end{prop}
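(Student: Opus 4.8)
Le plan est de se ramener, via la commutativité de l'anneau de Hecke, à la diagonalisabilité de chaque $\mathrm{T}_p$ pris isolément, puis d'établir celle-ci en munissant $\C[X_n]$ d'un produit scalaire rendant chaque $\mathrm{T}_p$ auto-adjoint.

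On rappelle d'abord que les opérateurs $\mathrm{T}_p$ s'identifient à des éléments de l'anneau de Hecke $\mathrm{H}(\mathrm{O}_n)$, comme expliqué avant la proposition \ref{H(G)} ; cet anneau étant commutatif (proposition \ref{H(G)}), les endomorphismes $\mathrm{T}_p$ de $\C[X_n]$ commutent deux à deux. Or une famille d'endomorphismes commutant deux à deux d'un $\C$-espace vectoriel de dimension finie est codiagonalisable si, et seulement si, chacun de ses membres est diagonalisable. Il suffit donc de prouver que, pour tout $p$ premier, $\mathrm{T}_p$ est diagonalisable sur $\C$.

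Fixons des représentants $L_1,\dots,L_N\in\mathcal{L}_n$ des classes de $X_n$ (où $N=\vert X_n\vert$), et pour chaque $i$ posons $m_i=\vert\mathrm{O}(L_i)\vert$, l'ordre (fini) du stabilisateur $\mathrm{O}(L_i)$ de $L_i$ dans $\mathrm{O}(V)$. Munissons $\C[X_n]$ de l'unique forme hermitienne définie positive vérifiant $\langle\overline{L}_i,\overline{L}_j\rangle=m_i\,\delta_{i,j}$. En notant $T_n(p)=(t_{i,j})$ la matrice de $\mathrm{T}_p$ dans la base $(\overline{L}_i)$ --- de sorte que $t_{i,j}$ est le nombre de $p$-voisins de $L_j$ isomorphes à $L_i$ --- on voit aussitôt que $\mathrm{T}_p$ est auto-adjoint pour cette forme si, et seulement si, $m_i\,t_{i,j}=m_j\,t_{j,i}$ pour tous $i,j$. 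Admettant cette identité, $\mathrm{T}_p$ est auto-adjoint, donc diagonalisable sur $\C$ (et même à valeurs propres réelles), ce qui conclut.

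Le seul point un peu technique est donc l'identité de symétrie $m_i\,t_{i,j}=m_j\,t_{j,i}$. Je la démontrerais par un comptage pondéré par la masse. Soit $S$ l'ensemble des couples $(M,M')$ de réseaux de $V$ appartenant à $\mathcal{L}_n$, avec $M$ isomorphe à $L_i$, $M'$ isomorphe à $L_j$, et $M,M'$ des $p$-voisins ; le groupe $\mathrm{O}(V)$ agit diagonalement sur $S$, et l'on évalue de deux manières la ``masse'' de $S$, à savoir la somme des $\vert\mathrm{Stab}_{\mathrm{O}(V)}(M,M')\vert^{-1}$ pris sur un système de représentants des $\mathrm{O}(V)$-orbites de $S$. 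En projetant $S$ sur sa première coordonnée : toute classe d'isomorphisme étant une seule orbite de $\mathrm{O}(V)$, de stabilisateur $\mathrm{O}(L_i)$ d'ordre $m_i$, et les $p$-voisins de $L_i$ isomorphes à $L_j$ formant un ensemble à $t_{j,i}$ éléments sur lequel $\mathrm{O}(L_i)$ agit, un décompte orbite-stabilisateur donne la valeur $t_{j,i}/m_i$. En projetant sur la seconde coordonnée, et en utilisant que la relation ``être des $p$-voisins'' est symétrique (immédiat sur la définition $L_1/(L_1\cap L_2)\simeq L_2/(L_1\cap L_2)\simeq\Z/p\Z$), on obtient de même la valeur $t_{i,j}/m_j$, d'où l'identité. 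Je n'attends pas de difficulté sérieuse sur ce point : il s'agit d'une mise en forme soigneuse de l'argument de masse, et cette symétrie, bien connue, peut aussi être déduite directement du formalisme de l'anneau de Hecke de $\mathrm{O}_n$ développé dans \cite{CL}.
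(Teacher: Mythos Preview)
Your proof is correct and follows the classical, conceptual route: each $\mathrm{T}_p$ is self-adjoint for the mass-weighted inner product $(\overline{L}_i\,|\,\overline{L}_j)=|\mathrm{O}(L_i)|\,\delta_{i,j}$, hence diagonalizable, and commutativity of $\mathrm{H}(\mathrm{O}_n)$ then yields simultaneous diagonalization. This is precisely the argument the paper calls ``classical'' and records separately as Proposition~\ref{adjoint} (with reference to \cite{NV} and \cite[ch.~III, \S 2]{CL}), so your mass-counting sketch for $m_i\,t_{i,j}=m_j\,t_{j,i}$ is on solid ground. The paper's own proof of Proposition~\ref{codiag}, however, takes a shortcut specific to $n=23,25$: having computed $T_n(2)$ explicitly, it simply observes that this matrix has pairwise distinct eigenvalues; any operator commuting with a diagonalizable operator of simple spectrum preserves each one-dimensional eigenspace and is therefore diagonal in the same basis, so all of $\mathrm{H}(\mathrm{O}_n)$ is codiagonalized by the eigenbasis of $\mathrm{T}_2$. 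Your argument is more general and works for every $n$ without any computation; the paper's argument is ad hoc but delivers for free the stronger fact actually exploited in what follows, namely that a fixed eigenbasis of $\mathrm{T}_2$ diagonalizes every Hecke operator simultaneously.
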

\begin{proof}
C'est un résultat classique, que l'on retrouve notamment dans \cite{CL} à de nombreuses reprises. Pour s'en convaincre dans notre cas, il suffit de constater que les matrices $T_{23}$ et $T_{25}$ calculées au chapitre précédent ont toutes leurs valeurs propres distinctes, et que les anneaux de Hecke $\mathrm{H}(\mathrm{O}_n)$ sont commutatifs.

On en déduit qu'on a même un résultat plus fort : si $n=23$ ou $25$, tous les éléments de $\mathrm{H}(\mathrm{O}_n)$ sont codiagonalisables, et toute base de diagonalisation de $\mathrm{T}_2$ est un base de codiagonalisation de $\mathrm{H}(\mathrm{O}_n)$.
\end{proof}

Grâce à {\rm{PARI}}, il est facile de trouver des bases de diagonalisation des matrices $T_{23}$ et $T_{25}$ que l'on a calculées, qui seront nécessairement des bases de codiagonalisation respectivement pour $\mathrm{H}(\mathrm{O}_{23})$ et $\mathrm{H}(\mathrm{O}_{25})$.

Notons respectivement $v_1,\dots,v_{32}$ et $w_1,\dots,w_{121}$ les bases de diagonalisation obtenues pour $T_{23}$ et $T_{25}$, en supposant que les valeurs propres  $\lambda_i$ ou $\mu_i$ correspondantes suivent les numérotations des tables \ref{psi23}, \ref{psi25} et \ref{psi252}. Du fait des valeurs que prennent les $\lambda_i$ et les $\mu_i$, on peut choisir les $v_i$ et les $w_i$ de telle sorte que :

- pour $i\in \{1,\dots,32\}$, les $v_i$ sont dans $\Z^{32}$, et de coordonnées premières entre elles ;

- pour $i\in \{1,\dots,57\}$, les $w_i$ sont dans $\Z^{121}$, et de coordonnées premières entre elles ;

- pour $i\in \{58,\dots,121\}$, les $w_i$ sont dans $\Z[\sqrt{144169}]^{121}$, et de coordonnées premières entre elles.

Les valeurs propres de matrices $T_{23}(p)$ et $T_{25}(p)$ associées respectivement aux vecteurs $v_i$ et $w_i$ sont alors données par les propositions suivantes :

\begin{prop} \label{lambdai} En reprenant les notations précédentes, les vecteurs $v_i\in \Z^{32}$ constituent des vecteurs propres communs à tous les éléments de $\mathrm{H}(\mathrm{O}_{23})$. De plus, chacun de ces vecteurs $v_i$ engendre une représentation automorphe $\pi_i\in \Pi_{\mathrm{cusp}}(\mathrm{SO}_{23})$ dont le paramètre standard $\psi_i=\psi(\pi_i,\mathrm{St})$ est donné par la table \ref{psi23}.

Pour $p$ un nombre premier, notons $\lambda_i(p)$ la valeur propre pour $\mathrm{T}_p$ associée au vecteur $v_i$ (en particulier, $\lambda_i(2)=\lambda_i$). Alors on a la formule :
$$\lambda_i(p)=p^{\frac{21}{2}}\mathrm{Trace}\left( c_p(\pi_i) \vert V_{\mathrm{St}} \right).$$
\end{prop}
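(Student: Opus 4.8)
The plan is to combine three ingredients: the commutativity and codiagonalizability of the Hecke ring $\mathrm{H}(\mathrm{O}_{23})$ (Proposition \ref{H(G)} and Proposition \ref{codiag}), the classification of the discrete automorphic spectrum of $\mathrm{SO}_{23}$ via the Arthur--Langlands parametrization of \S\ref{2.3} (in particular Ta\"ibi's results), and the explicit table of standard parameters \cite[Table C.7]{CL}. First I would recall that $\mathbb{C}[X_{23}]$ is, as a module over $\mathrm{H}(\mathrm{O}_{23})$, a direct sum of lines, each line being an eigenline for every $\mathrm{T}_A$; this is exactly the content of Proposition \ref{codiag} together with the fact that the eigenvalues of $\mathrm{T}_2$ are pairwise distinct, so that the eigenline decomposition for $\mathrm{T}_2$ already separates all of $\mathrm{H}(\mathrm{O}_{23})$. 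Hence each $v_i$ is automatically a common eigenvector of all $\mathrm{T}_p$, and the eigenvalue $\lambda_i(p)$ is well-defined; that $v_i$ can be taken in $\mathbb{Z}^{32}$ with coprime coordinates follows from the fact (checked on the explicitly computed matrix $T_{23}$) that the characteristic polynomial of $\mathrm{T}_2$ splits over $\mathbb{Q}$ with simple roots.

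Next I would pass from the geometric side to the automorphic side. By the standard dictionary (as in \cite[ch. IV--VII]{CL}) between $\mathbb{C}[X_n]$ and automorphic forms on $\mathrm{O}_n$, the line $\mathbb{C}v_i$ corresponds to an irreducible automorphic representation which, after restriction to $\mathrm{SO}_{23}$, gives a representation $\pi_i \in \Pi_{\mathrm{cusp}}(\mathrm{SO}_{23})$ (cuspidality here because $X_{23}$ is built from anisotropic lattices, so the relevant automorphic forms are cuspidal). The action of $\mathrm{T}_p$ on $v_i$ is, via this dictionary, the action of the corresponding element of the spherical Hecke algebra at $p$, and the Satake isomorphism identifies this action with evaluation of a specific character against the Satake parameter $c_p(\pi_i)\in \widehat{G}(\mathbb{C})_{\mathrm{ss}}$. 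Concretely, the $A$-neighbor operator $\mathrm{T}_p = \mathrm{T}_{\mathbb{Z}/p\mathbb{Z}}$ corresponds, under Satake, to the trace of $c_p(\pi_i)$ on the standard representation $V_{\mathrm{St}}$ of $\widehat{\mathrm{SO}}_{23}=\mathrm{Sp}_{22}(\mathbb{C})$, up to an explicit power of $p$ coming from the normalization of the Satake parameter (unitary versus arithmetic normalization); tracking the half-integral Tate twist gives precisely the factor $p^{21/2}$, since the motivic weight of the standard parameter of an element of $\Pi_{\mathrm{cusp}}(\mathrm{SO}_{23})$ of the relevant type is $21$. This computation of the normalizing exponent is essentially the same as the one carried out in \cite{CL} in dimension $24$, and I would cite \cite[ch. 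VI--VII]{CL} for it rather than redo it.

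Finally, to pin down which parameter $\psi_i$ goes with which eigenvector, I would use that the standard parameters of the elements of $\Pi_{\mathrm{cusp}}(\mathrm{SO}_{23})$ are completely listed in \cite[Table C.7]{CL}, and that $|X_{23}|=32$ matches the number of entries there; the eigenvalue $\lambda_i(2)=\lambda_i$ of $\mathrm{T}_2$ (computed numerically from $T_{23}$) is compared with $2^{21/2}\,\mathrm{Trace}(c_2(\Delta)|V_{\mathrm{St}})$ for each candidate parameter $\Delta$ appearing in the table — these traces are known since the constituents involve only $\Delta_{w_1,\dots,w_n}$ with $m=1$ or the two forms $\Delta_{23}$, all of whose degree-$2$ Satake data are explicit — and the match is unique because the $\lambda_i$ are pairwise distinct. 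The main obstacle is really the second step: making rigorous the identification of the combinatorial neighbor operator $\mathrm{T}_p$ with the Satake-normalized trace, i.e. getting the exponent of $p$ and the representation $V_{\mathrm{St}}$ exactly right, including the subtlety that one works with $\mathrm{O}_{23}$ rather than $\mathrm{SO}_{23}$ and must check that passing to $\mathrm{SO}_{23}$ does not disturb the eigenvalues (it does not, because each eigenline of $\mathrm{T}_2$ is one-dimensional). Once that normalization is fixed, everything else is bookkeeping against \cite[Table C.7]{CL} and the numerical output for $T_{23}$.
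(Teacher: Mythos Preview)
Your proposal is correct and follows essentially the same route as the paper: common eigenvectors from the simplicity of the $\mathrm{T}_2$-spectrum, passage to $\Pi_{\mathrm{cusp}}(\mathrm{SO}_{23})$ via the automorphic dictionary, the Satake/Gross identity $\mathrm{T}_p = p^{21/2}[V_{\mathrm{St}}]$ (the paper cites \cite{Gr} and \cite[ch.~VI, lemme~2.7]{CL} for this), and matching against \cite[Table~C.7]{CL} by evaluating at $p=2$. One small slip: no $\Delta_{23}^2$ appears among the parameters for $\mathrm{SO}_{23}$ --- all $32$ eigenvalues of $T_{23}$ are rational integers --- so that clause in your final paragraph is unnecessary.
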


\begin{proof}
Considérons $p$ un nombre premier, et plaçons nous dans l'anneau de Hecke $\mathrm{H}(\mathrm{O}_{23})$. Reprenons la $\Q$-base $v_1,\dots,v_{32}$ de l'espace vectoriel $\Q[X_{23}]\simeq \Q^{32}$ définie précédemment. Chacun des vecteurs $v_i$ engendre une représentation automorphe $\pi_i\in \Pi_{\mathrm{cusp}}(\mathrm{SO}_{23})$. Les paramètres standards $\psi(\pi_i,\mathrm{St})$ de tels $\pi_i$ sont donnés par \cite[Table C.7]{CL}.

Les formules de Gross (traitées dans \cite{Gr} dans le cas des groupes adjoints, et précisées dans \cite[ch. VI, lemme 2.7]{CL} dans le cas des groupes semi-simples) nous donnent la relation suivante : $p^{\frac{21}{2}}\left[ V_{\mathrm{St}}\right] = \mathrm{T}_p$. On en déduit la relation cherchée entre $\lambda_i(p)$ et $\pi_i$ pour tout $p$.

Il suffit ensuite de vérifier la relation $\lambda_i=\lambda_i(2)$ pour s'assurer que l'indexation des $\pi_i$ correspond bien à celle choisie pour les $v_i$ (ce qui a bien un sens, comme tous les $\lambda_i$ sont distincts).
\end{proof}

\begin{prop} \label{mui} En reprenant les notations précédentes, les vecteurs $w_i\in \Z[\sqrt{144169}]^{121}$ constituent des vecteurs propres communs à tous les éléments de $\mathrm{H}(\mathrm{O}_{25})$. De plus, chacun de ces vecteurs $w_i$ engendre une représentation automorphe $\pi_i\in \Pi_{\mathrm{cusp}}(\mathrm{SO}_{25})$ dont le paramètre standard $\psi_i=\psi(\pi_i,\mathrm{St})$ est donné par les tables \ref{psi25} et \ref{psi252}.

Pour $p$ un nombre premier, notons $\mu_i(p)$ la valeur propre pour $\mathrm{T}_p$ associée au vecteur $w_i$ (en particulier, $\mu_i(2)=\mu_i$). Alors on a la formule :
$$\mu_i(p)=p^{\frac{23}{2}}\mathrm{Trace}\left( c_p(\pi_i) \vert V_{\mathrm{St}} \right).$$
\end{prop}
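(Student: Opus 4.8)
Le plan est de recopier la démonstration de la proposition \ref{lambdai} en remplaçant partout $23$ par $25$, l'anneau $\mathrm{H}(\mathrm{O}_{23})$ par $\mathrm{H}(\mathrm{O}_{25})$ et l'exposant $\frac{21}{2}$ par $\frac{23}{2}$ : l'argument est purement formel une fois que l'on dispose de la classification des paramètres standards. Concrètement, je fixerais $p$ premier et me placerais dans l'anneau de Hecke $\mathrm{H}(\mathrm{O}_{25})$, commutatif d'après la proposition \ref{H(G)}. Par construction, la famille $w_1,\dots,w_{121}$ diagonalise $T_{25}=T_{25}(2)$, et le calcul du paragraphe \ref{3.4} montre que les valeurs propres $\mu_i$ sont deux-à-deux distinctes ; en raisonnant comme dans la preuve de la proposition \ref{codiag}, on en déduit que les $w_i$ forment une base propre commune à tous les éléments de $\mathrm{H}(\mathrm{O}_{25})$, donc que $\mathrm{T}_p\,w_i=\mu_i(p)\,w_i$ pour un scalaire $\mu_i(p)$ vérifiant $\mu_i(2)=\mu_i$ (ce scalaire vivant dans $\Z[\sqrt{144169}]$ pour les indices $i\geq 58$, conformément aux choix faits plus haut).

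J'invoquerais ensuite l'interprétation automorphe : via l'identification de $\mathcal{R}(\mathrm{O}_{25})$ rappelée au paragraphe \ref{2.2} et les résultats de Taïbi évoqués au paragraphe \ref{2.3}, chaque $w_i$ engendre une représentation $\pi_i\in\Pi_{\mathrm{cusp}}(\mathrm{SO}_{25})$ dont le paramètre standard $\psi(\pi_i,\mathrm{St})$ a été déterminé par Chenevier-Renard \cite[Appendix D]{CR} --- ce sont précisément les paramètres reportés dans les tables \ref{psi25} et \ref{psi252}. Les formules de Gross (\cite{Gr} dans le cas adjoint, précisées pour les groupes semi-simples dans \cite[ch. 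VI, lemme 2.7]{CL}) donnent alors dans $\mathrm{H}(\mathrm{O}_{25})$ l'égalité $p^{\frac{23}{2}}\,[V_{\mathrm{St}}]=\mathrm{T}_p$, d'où la formule annoncée $\mu_i(p)=p^{\frac{23}{2}}\,\mathrm{Trace}\!\left(c_p(\pi_i)\vert V_{\mathrm{St}}\right)$ pour tout $p$. Enfin, pour s'assurer que la numérotation des $\pi_i$ est compatible avec celle des $w_i$, il suffira de vérifier l'égalité $\mu_i=\mu_i(2)$ pour chaque $i$, ce qui a bien un sens puisque les $\mu_i$ sont deux-à-deux distincts.

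Le seul point réellement substantiel --- et déjà réglé par les travaux cités --- est la détermination des paramètres standards des éléments de $\Pi_{\mathrm{cusp}}(\mathrm{SO}_{25})$ : pour les facteurs $\pi[d]$ faisant intervenir $\pi\in\Pi_{\mathrm{alg}}^\bot(\mathrm{PGL}_4)$ ou $\Pi_{\mathrm{alg}}^\bot(\mathrm{PGL}_6)$, ceci repose sur les calculs de \cite{Meg}, et c'est aussi ce qui limitera in fine la portée des applications. La démonstration en elle-même ne présente aucune difficulté supplémentaire par rapport à celle de la proposition \ref{lambdai}.
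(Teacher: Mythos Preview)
Ta démonstration est correcte et suit essentiellement la même démarche que celle du papier : passage dans $\mathrm{H}(\mathrm{O}_{25})$, interprétation automorphe des $w_i$ via \cite[Appendix D]{CR}, application des formules de Gross pour obtenir $p^{23/2}[V_{\mathrm{St}}]=\mathrm{T}_p$, puis vérification de la compatibilité des indexations en $p=2$. Tu détailles même un peu plus certains points (commutativité de l'anneau de Hecke, référence explicite à la proposition \ref{codiag}) que le papier, qui se contente de dire que la preuve ``se fait comme précédemment''.
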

\begin{proof}
La démonstration se fait comme précédemment. Considérons $p$ un nombre premier, et plaçons nous dans l'anneau de Hecke $\mathrm{H}(\mathrm{O}_{25})$. Reprenons la $\Q[\sqrt{144169}]$-base $w_1,\dots,w_{121}$ de l'espace vectoriel $\Q[\sqrt{144169}][X_{25}]\simeq \Q[\sqrt{144169}]^{121}$ définie précédemment. Chacun des vecteurs $w_i$ engendre une représentation automorphe $\pi_i\in \Pi_{\mathrm{cusp}}(\mathrm{SO}_{25})$. Les paramètres standards $\psi(\pi_i,\mathrm{St})$ de tels $\pi_i$ sont donnés par \cite[Appendix D]{CR}.

Les formules de Gross nous donnent la relation suivante : $p^{\frac{23}{2}}\left[ V_{\mathrm{St}}\right] = \mathrm{T}_p$. On en déduit la relation cherchée entre $\mu_i(p)$ et $\pi_i$ pour tout $p$.

Il suffit ensuite de vérifier la relation $\mu_i=\mu_i(2)$ pour vérifier que l'indexation des $\pi_i$ correspond bien à celle choisie pour les $w_i$ (ce qui a encore bien un sens, comme tous les $\mu_i$ sont distincts).
\end{proof}

Afin de simplifier les notations, on définit les matrices $V\in \mathrm{M}_{32}(\R)$ et $W\in \mathrm{M}_{121}(\R)$ dont les colonnes sont respectivement les vecteurs $v_i$ et $w_i$. Pour $(a_1,\dots,a_n)\in\R^n$, on note $\mathrm{diag}(a_1,\dots,a_n)\in\mathrm{M}_n(\R)$ la matrice diagonale dont les coefficients diagonaux sont les $a_i$. Alors on a les égalités suivantes :
$$
\begin{aligned}
T_{23}(p) & = V\, \mathrm{diag}(\lambda_1(p),\dots,\lambda_{32}(p))\, V^{-1} ; \\
T_{25}(p) & = W\, \mathrm{diag}(\mu_1(p),\dots,\mu_{121}(p))\, W^{-1} .
\end{aligned}
$$

En particulier, on a la proposition suivante :
\begin{theo} \label{Tp} Pour tout $p\leq 113$ premier, les matrices $T_{23}(p)$ sont données dans \cite{MegTp}. Pour tout $p\leq 67$ premier, les matrices $T_{25}(p)$ sont données dans \cite{MegTp}.
\end{theo}
\begin{proof}
Il est équivalent de connaître les matrices $T_{23}(p)$ et $T_{25}(p)$ et de connaître les $\lambda_i(p)$ et les $\mu_j(p)$. D'après les propositions \ref{lambdai} et \ref{mui}, il suffit donc de connaître les valeurs de $\mathrm{Trace}(c_p(\pi)\vert V_{\mathrm{St}})$, pour les éléments $\pi\in \cup_m \Pi_{\mathrm{alg}}^\bot(\mathrm{PGL}_m)$ apparaissant dans les tables \ref{psi23}, \ref{psi25} et \ref{psi252}.

Les calculs de ces $\mathrm{Trace}(c_p(\pi)\vert V_{\mathrm{St}})$ ont justement été effectués jusqu'à $p\leq 67$ dans tous ces cas (voir \cite{Meg}). Les résultats de \cite{CL} permettent même d'aller plus loin, et de connaître pour tout $p\leq 113$ la matrice $\mathrm{T}_{23}(p)$ (grâce aux résultats sur les formes modulaires de Siegel de genre $2$).
\end{proof}

Les matrices $T_{23}(p)$ (pour $p\leq 113$) et $T_{25}(p)$ (pour $p\leq 67$) nous donnent en particulier les corollaires suivants :

\begin{cor} Soit $p$ un nombre premier. Le diamètre du graphe $\mathrm{K}_{23}(p)$ est le suivant : $4$ pour $p=2$, $3$ pour $p=3$, $2$ pour $5\leq p \leq 19$, et $1$ pour $23\leq p\leq 113$.
\end{cor}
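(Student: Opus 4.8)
Le plan est d'exploiter directement les matrices $T_{23}(p)$ fournies par le théorème \ref{Tp} — disponibles pour tout $p\leq 113$ — et de lire le diamètre sur leurs puissances. On s'appuiera sur le principe combinatoire élémentaire suivant : si $\Gamma$ est un graphe connexe sur $N$ sommets, éventuellement muni de boucles, de matrice d'adjacence $A\in\mathrm{M}_N(\N)$ à support symétrique (c'est-à-dire $A_{i,j}>0\Leftrightarrow A_{j,i}>0$), alors pour tous sommets $i,j$ on a $d(i,j)\leq k$ si et seulement si $\big((I_N+A)^k\big)_{i,j}>0$ : en effet $\big((I_N+A)^k\big)_{i,j}=\sum_{l=0}^k\binom{k}{l}(A^l)_{i,j}$ est strictement positif exactement lorsqu'il existe une marche de longueur au plus $k$ reliant $i$ à $j$. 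Le diamètre de $\Gamma$ est donc le plus petit entier $d$ tel que toutes les entrées de $(I_N+A)^d$ soient strictement positives. Ce principe s'applique à $A=T_{23}(p)$ : ses coefficients, qui comptent des $p$-voisins, sont positifs ou nuls, et l'on a $t_{i,j}>0\Leftrightarrow t_{j,i}>0$ puisque, la relation de $p$-voisinage étant symétrique, si $L_j$ admet un $p$-voisin isomorphe à $L_i$ alors, en transportant cet isomorphisme, $L_i$ admet un $p$-voisin isomorphe à $L_j$. La connexité de $\mathrm{K}_{23}(p)$ est assurée par la proposition \ref{kneserconn}, donc le diamètre est fini.

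Le cas $23\leq p\leq 113$ est alors immédiat : d'après le résultat, démontré au même paragraphe, selon lequel $\mathrm{K}_{23}(p)$ est complet si et seulement si $p\geq 23$, le graphe est complet pour ces $p$, et son diamètre vaut $1$. Pour les autres valeurs de $p$, on établit séparément les bornes inférieure et supérieure du diamètre par des calculs de puissances de matrices $32\times 32$ explicites tirées de \cite{MegTp}. Pour $p\in\{5,7,11,13,17,19\}$, le même résultat de complétude montre que $\mathrm{K}_{23}(p)$ n'est pas complet, d'où diamètre $\geq 2$ ; et l'on vérifie, pour chacun de ces six nombres premiers, que $(I_{32}+T_{23}(p))^2$ n'a que des entrées strictement positives, d'où diamètre $\leq 2$. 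Pour $p=3$, on vérifie que $(I_{32}+T_{23}(3))^2$ possède une entrée nulle (d'où diamètre $\geq 3$) et que $(I_{32}+T_{23}(3))^3$ n'a que des entrées strictement positives (d'où diamètre $\leq 3$). Pour $p=2$, on vérifie de même que $(I_{32}+T_{23})^3$ possède une entrée nulle et que $(I_{32}+T_{23})^4$ n'a que des entrées strictement positives, d'où diamètre $4$.

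Il n'y a pas de difficulté conceptuelle : la principale précaution à prendre est l'éventuelle présence de boucles dans $\mathrm{K}_{23}(p)$ — c'est-à-dire de coefficients diagonaux non nuls de $T_{23}(p)$, un réseau pouvant admettre un $p$-voisin qui lui est isomorphe sans lui être égal — mais ces boucles n'affectent pas les distances entre sommets distincts, et le passage par $(I_{32}+A)^d$ en tient compte automatiquement. L'essentiel du travail se réduit donc à mener correctement ces calculs de puissances et, pour les minorations du diamètre aux cas $p=2$ et $p=3$, à exhiber une paire de sommets effectivement éloignés.
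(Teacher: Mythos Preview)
Your approach is essentially the paper's own: the corollary is stated there without proof, as an immediate consequence of having the explicit matrices $T_{23}(p)$ from \cite{MegTp}, and you have made the reading-off procedure precise via the $(I+A)^d$ criterion. That is sound.

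There is, however, a small circularity to repair. For $p\in\{5,7,11,13,17,19\}$ you justify the lower bound ``diamètre $\geq 2$'' by invoking the ``only if'' direction of the completeness theorem ($\mathrm{K}_{23}(p)$ complet $\Leftrightarrow p\geq 23$). But in the paper the theorem proved just after this corollary establishes only the implication $p\geq 23\Rightarrow$ complet, via the Ramanujan bounds; the converse direction (for $p\leq 19$ the graph is not complete) is precisely what the present corollary is meant to supply. So you cannot cite it here. The fix is immediate and already implicit in your method: for each of these six primes, simply observe on the explicit matrix $T_{23}(p)$ that some off-diagonal entry vanishes (equivalently, that $I_{32}+T_{23}(p)$ is not entrywise positive), which gives diamètre $\geq 2$ directly. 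Likewise, for $23\leq p\leq 113$ you may either invoke the ``if'' direction of the completeness theorem (which \emph{is} proved independently) or, more uniformly, just check on the matrices that every entry of $T_{23}(p)$ is strictly positive.
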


\begin{cor} Soit $p$ un nombre premier. Le diamètre du graphe $\mathrm{K}_{25}(p)$ est le suivant : $6$ pour $p=2$, $4$ pour $p=3$, $3$ pour $5\leq p \leq 7$, et $2$ pour $11\leq p\leq 61$.
\end{cor}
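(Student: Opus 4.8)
L'idée est de ramener le calcul du diamètre de $\mathrm{K}_{25}(p)$ à une question sur les puissances de la matrice $T_{25}(p)$, connue explicitement pour tout $p\leq 67$ d'après le théorème \ref{Tp}.

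Je commencerais par observer que, par définition même du graphe de Kneser, pour $i\neq j$ la paire $\{\overline{L}_i,\overline{L}_j\}$ est une arête de $\mathrm{K}_{25}(p)$ si, et seulement si, $L_j$ admet un $p$-voisin isomorphe à $L_i$, c'est-à-dire si, et seulement si, le coefficient $t_{i,j}$ de $T_{25}(p)$ est non nul ; la relation de $p$-voisinage étant invariante par isométrie, cela équivaut aussi à $t_{j,i}\neq 0$, de sorte que $\mathrm{K}_{25}(p)$ est exactement le graphe (non orienté) sous-jacent à la matrice à coefficients entiers positifs $T_{25}(p)$. On en déduit que $(T_{25}(p)^k)_{i,j}$ compte les chemins de longueur $k$ de $\overline{L}_j$ à $\overline{L}_i$ dans $\mathrm{K}_{25}(p)$, que la distance de $\overline{L}_i$ à $\overline{L}_j$ (pour $i\neq j$) vaut $\min\{k\geq 1 \,:\, (T_{25}(p)^k)_{i,j}>0\}$ (fini par la proposition \ref{kneserconn}), et que le diamètre de $\mathrm{K}_{25}(p)$ est le plus petit entier $k$ pour lequel la matrice $\sum_{l=0}^{k}T_{25}(p)^l$ a tous ses coefficients strictement positifs.

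Je procéderais ensuite à ce calcul pour chacun des nombres premiers concernés. Pour $p\in\{2,3,5,7\}$ et pour tout nombre premier $p$ tel que $11\leq p\leq 61$, la matrice $T_{25}(p)\in\mathrm{M}_{121}(\Z)$ est donnée par le théorème \ref{Tp} ; on en calcule les puissances successives avec {\rm{PARI}}, et l'on détermine le plus petit $k$ tel que $\sum_{l=0}^{k}T_{25}(p)^l$ soit à coefficients tous strictement positifs. On doit alors trouver $k=6$ pour $p=2$, $k=4$ pour $p=3$, $k=3$ pour $p=5$ et $p=7$, et $k=2$ pour tout nombre premier $p$ tel que $11\leq p\leq 61$. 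Pour $p\in\{2,3,5,7\}$, la minimalité de $k$ (donc la valeur exacte du diamètre) se lit directement sur le calcul, puisque $\sum_{l=0}^{k-1}T_{25}(p)^l$ possède encore un coefficient nul. Pour $11\leq p\leq 61$, la minoration $\mathrm{diam}\,\mathrm{K}_{25}(p)\geq 2$ est immédiate : le graphe est connexe (proposition \ref{kneserconn}) et n'est pas complet puisque $p<67$, d'après le théorème de complétude de $\mathrm{K}_{25}(p)$ établi précédemment.

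L'obstacle principal n'est pas conceptuel mais logistique : il faut disposer des matrices $T_{25}(p)$ pour suffisamment de nombres premiers, ce qui est précisément le contenu du théorème \ref{Tp} (lui-même fondé sur la codiagonalisation de la proposition \ref{codiag} et sur le calcul des paramètres de Satake effectué dans \cite{Meg}) ; une fois ces matrices connues, il ne reste qu'une vérification finie et sans difficulté. Les seuls points à soigner sont le traitement du cas $k=0$ et de la diagonale, ainsi que la remarque — déjà faite plus haut — que la matrice (non symétrique) $T_{25}(p)$ définit bien le même graphe non orienté $\mathrm{K}_{25}(p)$.
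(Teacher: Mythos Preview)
Ta démarche est correcte et coïncide avec celle (implicite) du texte~: le corollaire y est énoncé sans démonstration, comme simple conséquence de la connaissance explicite des matrices $T_{25}(p)$ pour $p\leq 67$ (théorème~\ref{Tp}), et le diamètre se lit sur les puissances successives de ces matrices, exactement comme tu le fais.

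Un seul point mérite d'être signalé~: pour $11\leq p\leq 61$, tu justifies la minoration $\mathrm{diam}\,\mathrm{K}_{25}(p)\geq 2$ en invoquant le théorème de complétude \og établi précédemment\fg. Or ce théorème apparaît \emph{après} le corollaire dans l'article, et surtout l'implication \og $p<67\Rightarrow \mathrm{K}_{25}(p)$ non complet\fg{} n'y est pas démontrée indépendamment~: elle résulte justement des corollaires sur le diamètre. Il y a donc un léger risque de circularité. Ce détour est en fait inutile~: la même observation que pour $p\in\{2,3,5,7\}$ s'applique, à savoir que si $k=2$ est minimal dans ton calcul, alors $I+T_{25}(p)$ possède un coefficient hors diagonale nul, donc $T_{25}(p)$ aussi, et le graphe n'est pas complet. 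Il suffit de formuler uniformément l'argument de minimalité pour tous les $p\leq 61$.
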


Pour les plus grandes valeurs de $p$, on a le théorème suivant :

\begin{theo} Soit $p$ un nombre premier. Si $p\geq 23$, le graphe $\mathrm{K}_{23}(p)$ est complet. Si $p\geq 67$, le graphe $\mathrm{K}_{25}(p)$ est complet.
\end{theo}
\begin{proof}
La démonstration suit celle de \cite[ch. X, Théorème 2.4]{CL}. Nous la reprenons en détail dans le cas de $\mathrm{K}_{23}(p)$, et ne donnerons que quelques points clefs pour le cas de $\mathrm{K}_{25}(p)$.

\`A la manière de \cite[ch. X, \S2]{CL}, définissons les fonctions $\theta_1(p)=D_{11}(p)$, $\theta_2(p)=D_{15}(p)$, $\theta_3(p)=D_{17}(p)$, $\theta_4(p)=D_{19}(p)$, $\theta_5(p)=D_{21}(p)$, $\theta_6(p)=D_{19,7}(p)$, $\theta_7(p)=D_{21,5}(p)$,  $\theta_8(p)=D_{21,9}(p)$ et  $\theta_9(p)=D_{21,13}(p)$. D'après la proposition \ref{lambdai}, il existe des polynômes $C_{i,r}\in \Z[X]$, pour $1\leq i\leq 32$ et $0\leq r\leq 9$, uniquement déterminés, tels que l'on a pour tout $1\leq i\leq 32$ et pour tout $p$ premier :
$$\lambda_i(p) = C_{i,0}(p)+\sum_{r=1}^9 C_{i,r}(p)\theta_r(p).$$

Si l'on note $T_{23}(p) = (t_{i,j}(p))_{1\leq i,j\leq 32}$, alors il existe des polynômes $P_{i,j,r}\in \Q[X]$, pour $1\leq i,j\leq 32$ et $0\leq r\leq 9$, uniquement déterminés, tels que l'on a pour tout $1\leq i,j\leq 32$ et pour tout $p$ premier :
$$t_{i,j}(p) = P_{i,j,0}(p) + \sum_{r=1}^9 P_{i,j,r}(p) \theta_r(p).$$

Supposons que les deux sommets de $\mathrm{K}_{23}(p)$ correspondant aux classes $\overline{L_i},\overline{L_j}\in X_{23}$ ne sont pas connexes. Ceci est équivalent à dire que $t_{i,j}(p)=0$, et on a alors :
$$P_{i,j,0}(p)^2 = \left( \sum_{r=1}^9 P_{i,j,r}(p) \theta_r(p)\right)^2.$$

Par l'inégalité de Cauchy-Schwarz, a
$$\left( \sum_{r=1}^9 P_{i,j,r}(p) \theta_r(p)\right)^2\leq \left( \sum_{r=1}^9 P_{i,j,r}(p)^2 \gamma_r\right) \left( \sum_{r=1}^9 \gamma_r^{-1} \theta_r(p)^2\right)$$
pour tout $9$-uple $(\gamma_1,\dots,\gamma_9)$ de réels strictement positifs. En particulier, grâce aux inégalités de Ramanujan exposées à la proposition \ref{rama}, et en prenant $(\gamma_1,\dots,\gamma_9)=(4p^{11},4p^{15},4p^{17},\newline 4p^{19},4p^{21},16p^{19},16p^{21},16p^{21},16p^{21})$, on déduit l'inégalité :
$$\left( \sum_{r=1}^9 P_{i,j,r}(p) \theta_r(p)\right)^2\leq 9\left( \sum_{r=1}^9 P_{i,j,r}(p)^2 \gamma_r\right). $$

On définit les polynômes
$$(\Gamma_1(X),\Gamma_2(X),\dots,\Gamma_9(X)) = (4X^{11},4X^{15},\dots,16X^{21})$$
et 
$$Q_{i,j}(X) = P_{i,j,0}(X)^2 - 9\left( \sum_{r=1}^9 P_{i,j,r}(X)^2 \Gamma_r(X)\right).$$

Les polynômes $Q_{i,j}(X)$ sont des éléments de $\Q[X]$ de coefficient dominant strictement positif. Si l'on définit $\rho_{i,j}$ comme la plus grande racine réelle de $Q_{i,j}(X)$ (avec la convention $\rho_{i,j}=-\infty$ si $Q_{i,j}$ n'a pas de racine réelle). Avec ces notations, on a $t_{i,j}(p)>0$ dès que $p> \rho_{i,j}$.

Il suffit ensuite de constater que $\mathrm{max}\{ \rho_{i,j}, 1\leq i,j\leq 32\} \approx 21.15$. Ceci conclut que pour $p\geq 23$, tous les $t_{i,j}(p)$ sont non nuls, et que le graphe $\mathrm{K}_{23}(p)$ est complet pour de tels $p$.\\

Pour le graphe $\mathrm{K}_{25}(p)$, on procède de la même manière. Les fonctions $\theta_r(p)$ sont les suivantes : $\theta_1(p)=D_{11}(p)$, $\theta_2(p)=D_{15}(p)$, $\theta_3(p)=D_{17}(p)$, $\theta_4(p)=D_{19}(p)$, $\theta_5(p)=D_{21}(p)$, $\{\theta_6(p),\theta_7(p)\}=D^2_{23}(p)$, $\theta_8(p)=p^{11}\mathrm{tr}\left( \mathrm{Sym}^2 \, c_p (\Delta_{11})\vert V_{\mathrm{St}} \right)=\left(D_{11}(p)\right) ^2+p^{11}$, $\theta_{9}(p)=D_{19,7}(p)$, $\theta_{10}(p)=D_{21,5}(p)$, $\theta_{11}(p)=D_{21,9}(p)$, $\theta_{12}(p)=D_{21,13}(p)$, $\theta_{13}(p)=D_{23,7}(p)$, $\theta_{14}(p)=D_{23,9}(p)$, $\theta_{15}(p)=D_{23,13}(p)$, $\theta_{16}(p)=D_{23,13,5}(p)$, $\theta_{17}(p)=D_{23,15,3}(p)$, $\theta_{18}(p)=D_{23,15,7}(p)$, $\theta_{19}(p)=D_{23,17,5}(p)$, $\theta_{20}(p)=D_{23,17,9}(p)$, $\theta_{21}(p)=D_{23,19,3}(p)$ et $\theta_{22}(p)=D_{23,19,11}(p)$.

Les polynômes $\Gamma_r(X)$ (qui nous donnent les quantités $\gamma_r=\Gamma_r(p)$) sont les suivantes : $\Gamma_1(X)=4X^{11}$, $\Gamma_2(X)=4X^{15}$, $\Gamma_3(X)=4X^{17}$, $\Gamma_4(X)=4X^{19}$, $\Gamma_5(X)=4X^{21}$, $\Gamma_6(X)=\Gamma_7(X)=4X^{23}$, $\Gamma_8(X)=9X^{22}$, $\Gamma_9(X)=16X^{19}$, $\Gamma_{10}(X)=\dots=\Gamma_{12}(X)=16X^{21}$, $\Gamma_{13}(X)=\dots=\Gamma_{15}(X)=16X^{23}$ et $\Gamma_{16}(X)=\dots=\Gamma_{22}(X)=36X^{23}$.

On calcule de la même manières les quantités $\rho_{i,j}$ associées au polynômes $Q_{i,j}(X)$. Il suffit ensuite de constater que $\mathrm{max}\{ \rho_{i,j}, 1\leq i,j\leq 121\} \approx 64.25$. Ceci conclut que pour $p\geq 67$, tous les coefficients de la matrice $T_{25}(p)$ sont non nuls, et que le graphe $\mathrm{K}_{25}(p)$ est complet pour de tels $p$.
\end{proof}
\subsection{Quelques vérifications de nos résultats.}\label{4.2}
Les propositions \ref{lambdai} et \ref{mui} constituent une première vérification de nos calculs. En effet, on connaît grâce à ces propositions les valeurs propres de $\mathrm{T}_2$ sur $\Z[X_{23}]$ et $\Z[X_{25}]$ : on vérifie que ces valeurs correspondent bien aux valeurs propres des matrices $T_{23}$ et $T_{25}$ calculées par nos algorithmes.

Une deuxième vérification repose sur la remarque finale de \cite[Annexe B, \S 5.3]{CL}. Suivant l'indexation adoptée ici pour $X_{23}$, on a $L_2\simeq \mathrm{E}_{15}\oplus \mathrm{E}_8$ et $L_3\simeq \mathrm{E}_{16}\oplus \mathrm{E}_7$. Ainsi, le coefficient d'indice $(3,2)$ de la matrice $T_{23}$ est égal à $\mathrm{N}_2(\mathrm{E}_{15}\oplus \mathrm{E}_8,\mathrm{E}_{16}\oplus \mathrm{E}_7)$. On constate sur la matrice $T_{23}$ que l'on a calculé que ce coefficient vaut $120$, ce qui est bien cohérent avec le résultat de \cite{CL} évoqué ci-dessus.\\

Une autre vérification repose sur l'étude des graphes de Kneser $\mathrm{K}_{23}(p)$ ($2\leq p\leq 113$) et $\mathrm{K}_{25}(p)$ ($2\leq p\leq 67$), qu'on a calculés explicitement grâce au théorème \ref{Tp}. On vérifie dans un premier temps qu'ils sont connexes, comme exposé précédemment, ce qui est cohérent avec la proposition \ref{kneserconn}.

De plus, les graphes $\mathrm{K}_{24}(p)$ ont été calculé dans \cite[ch. X, Théorème 2.4]{CL} pour tout $p$ premier. On vérifie que nos calculs des graphes $\mathrm{K}_{23}(p)$ sont cohérents avec la propriété suivante :
\begin{prop} Soient $n\equiv -1\mathrm{\ mod\ }8$ et $p$ un nombre premier. Pour $i=1,2$, on se donne $L_i\in \mathcal{L}_n$ d'image $\overline{L_i}$ dans $X_n$, et $P_i\in \mathcal{L}_{n+1}$ d'image $\overline{P_i}\in X_{n+1}$ tel que $L_i\simeq P_i\cap \alpha_i^\perp$ pour un certain $\alpha_i\in R(P_i)$.

Si les sommets $\overline{L_1}$ et $\overline{L_2}$ sont adjacents dans le graphe $\mathrm{K}_n(p)$, alors les sommets $\overline{P_1}$ et $\overline{P_2}$ sont adjacents dans le graphe $\mathrm{K}_{n+1}(p)$.
\end{prop}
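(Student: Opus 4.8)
The plan is to reduce the claim to a concrete statement about $p$-neighbours and to exploit the construction of $\mathcal{L}_n$ (for $n\equiv-1\bmod 8$) inside $\mathcal{L}_{n+1}$ by intersecting with the orthogonal of a root, exactly as in \cite[Annexe B]{CL}. Concretely, suppose $\overline{L_1}$ and $\overline{L_2}$ are adjacent in $\mathrm{K}_n(p)$; up to replacing $L_1,L_2$ by isomorphic copies, we may assume $L_1,L_2\in\mathcal{L}_n$ are actual $p$-neighbours inside a common ambient space $W$ of dimension $n$. We are given $P_i\in\mathcal{L}_{n+1}$ and $\alpha_i\in R(P_i)$ with $L_i\simeq P_i\cap\alpha_i^\perp$. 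The first step is to set up a single euclidean space $V$ of dimension $n+1$ containing a fixed unit-like root $\alpha$ (with $\alpha\cdot\alpha=2$), identify $W$ with $\alpha^\perp$, and transport the $P_i$ so that $P_i\cap\alpha^\perp=L_i$ with $\alpha\in R(P_i)$; this is legitimate since the pair $(P_i,\alpha_i)$ is only determined up to isometry and, by the lemma relating $R(L')$ to $R(L)\cap r^\perp$, the class $\overline{L_i}$ together with an appropriate root datum determines $\overline{P_i}$.

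The second and main step is to build an explicit $p$-neighbour relation between $P_1$ and $P_2$ from the given one between $L_1$ and $L_2$. Since $L_1,L_2$ are $p$-neighbours, $L_2=L_1'(x)$ for an isotropic line $x\subset L_1/pL_1$, i.e.\ $L_2=M+\Z\frac{v}{p}$ where $v\in L_1$, $v\cdot v\equiv 0\bmod 2p^2$, and $M$ is the preimage of $x^\perp$ in $L_1$, following Proposition \ref{dvoisin}. Now view $v$ as an element of $P_1$ (recall $L_1=P_1\cap\alpha^\perp\subset P_1$): its image in $P_1/pP_1$ spans an isotropic line $\tilde x$ because $q(v)\equiv 0$, and $v\notin pP_1$ since $v\notin pL_1$ and $L_1=P_1\cap\alpha^\perp$ is saturated in $P_1$. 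One then checks that $P_1'(\tilde x)$ is a $p$-neighbour of $P_1$ whose intersection with $\alpha^\perp$ recovers $L_2$ — indeed $\alpha\cdot v=0$ forces $\alpha$ to lie in $\tilde x^\perp$, so $\alpha\in P_1'(\tilde x)$, $\alpha$ is still a root of $P_1'(\tilde x)$, and $P_1'(\tilde x)\cap\alpha^\perp = M\cap\alpha^\perp + \Z\frac{v}{p} = L_1'(x)=L_2$. Hence $P_1'(\tilde x)$ is an element of $\mathcal{L}_{n+1}$ which is simultaneously a $p$-neighbour of $P_1$ and contains the root $\alpha$ with $P_1'(\tilde x)\cap\alpha^\perp\simeq L_2$.

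The third step closes the argument: by the lemma computing $R(P\cap r^\perp)$ in terms of $R(P)$ and the irreducible component of $r$ (and, for $n=23,25$, by Proposition \ref{RXn} identifying a lattice in $\mathcal{L}_n$ with its root system, together with the analogous statement in dimension $24$), the class $\overline{P_1'(\tilde x)}$ is determined and, crucially, equals $\overline{P_2}$: both are lattices in $\mathcal{L}_{n+1}$ admitting a root $r$ with $P\cap r^\perp\simeq L_2$, and the correspondence of \cite[Annexe B, prop. 2.6]{CL} between such pairs $(P,R)$ and elements of $X_n$ is injective on the relevant data. Therefore $\overline{P_1}$ and $\overline{P_2}=\overline{P_1'(\tilde x)}$ are adjacent in $\mathrm{K}_{n+1}(p)$, as claimed.

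The step I expect to be the main obstacle is the second one, namely verifying that passing from the neighbour datum $(L_1,x,v)$ to $(P_1,\tilde x,v)$ behaves correctly with respect to the congruence conditions ($v\cdot v\equiv 0\bmod 2p^2$ is needed in $P_1$, not merely in $L_1$, but this is automatic since the form is unchanged) and, more delicately, that the saturation of $L_1$ in $P_1$ is exactly $\alpha^\perp\cap P_1$ so that $v\notin pP_1$ and the two quadrics $C_{L_1}(\Z/p\Z)$ and $C_{P_1}(\Z/p\Z)$ interact as described; one must also check that $\alpha$ remains a \emph{root} of $P_1'(\tilde x)$ and not merely an element of it, which follows from $\alpha\cdot\alpha=2$ being preserved. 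Everything else is a routine unwinding of Proposition \ref{dvoisin} and the structural lemmas of \S\ref{2.1}.
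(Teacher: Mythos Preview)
Your argument is correct and follows essentially the same route as the paper. Both reduce to realizing $P_i$ as the (unique) even unimodular overlattice of $L_i\oplus\mathrm{A}_1$ in a common ambient space $V=W\oplus(\Q\otimes\mathrm{A}_1)$: this is precisely your ``transport'' in step~1, and it is also what justifies your step~3, since any $Q\in\mathcal{L}_{n+1}$ admitting a root $\beta$ with $Q\cap\beta^\perp\simeq L_2$ contains a copy of $L_2\oplus\mathrm{A}_1$ with index~$2$, and $\mathrm{r\acute{e}s}(L_2\oplus\mathrm{A}_1)$ has a single isotropic line, forcing $Q\simeq P_2$. The only real difference is one of packaging: the paper then simply invokes \cite[Annexe~B, Proposition~4.2]{CL}, which asserts directly that if $L_1,L_2$ are $p$-voisins in $W$ then the associated $P_1,P_2$ are $p$-voisins in $V$, whereas your step~2 reproves that proposition by hand --- lifting the isotropic line $x\subset L_1/pL_1$ to $\tilde x\subset P_1/pP_1$ via the saturated inclusion $L_1=P_1\cap\alpha^\perp\hookrightarrow P_1$, and checking that $P_1'(\tilde x)\cap\alpha^\perp=L_2$. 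Your version is self-contained; the paper's is two lines.
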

\begin{proof}
Considérons $L_1,L_2\in \mathcal{L}_n$ deux $p$-voisins, que l'on voit comme des réseaux de $L_1\otimes \Q = L_2\otimes \Q=\mathrm{U}_n$. Pour $i=1,2$, on pose $\widetilde{P}_i=L_i\oplus \mathrm{A}_1$, et $P_i$ l'unique réseau unimodulaire pair contenant $\widetilde{P}_i$ (c'est-à-dire l'image réciproque par $\widetilde{P}_i^\sharp \rightarrow \mathrm{r\acute{e}s}\, \widetilde{P}_i$ de l'unique droite isotrope de $\mathrm{r\acute{e}s}\, \widetilde{P}_i$).

Les réseaux $P_1,P_2$ sont des réseaux unimodulaires pairs de $\mathrm{V}_n=\mathrm{U}_n\oplus (\Q\otimes \mathrm{A}_1)$ qui vérifient pour $i=1,2$ : $L_i=P_i\cap \mathrm{U}_n$. On est ainsi dans le cadre de \cite[Annexe B, Proposition 4.2]{CL}. D'après cette proposition, les réseaux $P_1,P_2$ sont des $p$-voisins, ce qui conclut notre démonstration.
\end{proof}

On applique ce résultat aux graphes $\mathrm{K}_{23}(p)$ et $\mathrm{K}_{24}(p)$. On vérifie que cette propriété est satisfaite pour $p< 47$ (il est inutile de la vérifier pour $p\geq 47$ comme les graphes $\mathrm{K}_{23}(p)$ et $\mathrm{K}_{24}(p)$ sont alors complets). Pour cela, on utilise la table \ref{RX23}, qui nous donne pour chaque réseau $L\in \mathcal{L}_{23}$ la classe dans $X_{24}$ de l'unique réseau unimodulaire pair contenant $L\oplus \mathrm{A}_1$.

On vérifie aussi qu'on a le corollaire plus faible suivant :

\begin{cor} Soit $p$ un nombre premier. On note $\mathrm{K}'_{24}(p)$ le sous-graphe de $\mathrm{K}_{24}(p)$ obtenu en retirant le sommet correspondant au réseau de Leech. Alors le diamètre de $\mathrm{K}'_{24}(p)$ est inférieur ou  égal au diamètre de $\mathrm{K}_{23}(p)$.

En particulier que le graphe $\mathrm{K}'_{24}(p)$ est complet pour $p\geq 23$.
\end{cor}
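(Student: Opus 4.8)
Le plan est d'exhiber une application surjective $\pi \colon X_{23} \to X_{24}\setminus\{\mathrm{Leech}\}$ qui ne fait jamais croître les distances lorsqu'on la regarde comme une application entre sommets de $\mathrm{K}_{23}(p)$ et de $\mathrm{K}'_{24}(p)$ ; la comparaison des diamètres en résultera aussitôt. On pose $\pi(\overline L) = \overline P$, où $P$ est l'unique réseau pair unimodulaire contenant $L\oplus \mathrm{A}_1$ (celui décrit dans la preuve de la proposition précédente, à savoir l'image réciproque de l'unique droite isotrope du résidu de $L\oplus \mathrm{A}_1$). D'abord je vérifierais que $\pi$ est bien définie sur $X_{23}$ : si $L\simeq L'$, alors $L\oplus \mathrm{A}_1\simeq L'\oplus \mathrm{A}_1$, donc leurs uniques surréseaux pairs unimodulaires sont isomorphes ; et l'image de $\pi$ évite la classe du réseau de Leech, puisque $P$ contient une racine — celle de la composante $\mathrm{A}_1$ — alors que le réseau de Leech n'en a aucune.

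Ensuite je montrerais la surjectivité de $\pi$ grâce à la description de $X_{23}$ rappelée au paragraphe \ref{3.1}. Si $P\in\mathcal{L}_{24}$ n'est pas isomorphe au réseau de Leech, il possède une racine $r$ ; le réseau $L=P\cap r^\perp$ appartient à $\mathcal{L}_{23}$, et $L\oplus \Z r\simeq L\oplus \mathrm{A}_1$ est un sous-réseau d'indice $2$ de $P$ (comparer les déterminants : $\mathrm{det}(L\oplus \mathrm{A}_1)=4$ et $\mathrm{det}(P)=1$). Par unicité du surréseau pair unimodulaire, on a donc $\pi(\overline L)=\overline P$, ce qui prouve que $\pi$ atteint toute classe de $X_{24}\setminus\{\mathrm{Leech}\}$. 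Le point clef — que $\pi$ ne fait pas croître les distances d'un cran entre sommets adjacents — est alors exactement le contenu de la proposition précédente, appliquée avec $P_i=\pi(\overline{L_i})$ et $\alpha_i$ la racine de la composante $\mathrm{A}_1$ : si $\overline{L_1}$ et $\overline{L_2}$ sont adjacents dans $\mathrm{K}_{23}(p)$, alors $\pi(\overline{L_1})$ et $\pi(\overline{L_2})$ sont à distance au plus $1$ dans $\mathrm{K}_{24}(p)$, donc dans $\mathrm{K}'_{24}(p)$ puisqu'aucun des deux sommets n'est celui du réseau de Leech.

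La comparaison des diamètres s'en déduit par l'argument usuel : pour deux sommets $\overline P,\overline{P'}$ de $\mathrm{K}'_{24}(p)$, on choisit par surjectivité des antécédents $\overline L,\overline{L'}$, puis — le graphe $\mathrm{K}_{23}(p)$ étant connexe d'après la proposition \ref{kneserconn} — un chemin de longueur au plus $\mathrm{diam}(\mathrm{K}_{23}(p))$ de $\overline L$ à $\overline{L'}$ dans $\mathrm{K}_{23}(p)$ ; son image par $\pi$, après suppression d'éventuelles répétitions consécutives, fournit un chemin de longueur au plus $\mathrm{diam}(\mathrm{K}_{23}(p))$ de $\overline P$ à $\overline{P'}$ dans $\mathrm{K}'_{24}(p)$. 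D'où $d(\overline P,\overline{P'})\leq \mathrm{diam}(\mathrm{K}_{23}(p))$ dans $\mathrm{K}'_{24}(p)$, puis $\mathrm{diam}(\mathrm{K}'_{24}(p))\leq \mathrm{diam}(\mathrm{K}_{23}(p))$ en prenant le maximum sur les couples de sommets (et au passage $\mathrm{K}'_{24}(p)$ reste connexe). Pour la dernière assertion : on a établi au paragraphe \ref{4.1} que $\mathrm{K}_{23}(p)$ est complet dès que $p\geq 23$, donc de diamètre $1$ ; l'inégalité précédente donne $\mathrm{diam}(\mathrm{K}'_{24}(p))\leq 1$, et comme ce graphe a $\vert X_{24}\vert-1=23\geq 2$ sommets, il est complet.

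Le seul point demandant un peu de soin est la mise en place de $\pi$ — la surjectivité et l'évitement de la classe de Leech — ; tout le reste est l'observation combinatoire élémentaire selon laquelle une application surjective de sommets envoyant tout couple de sommets adjacents sur un couple de sommets à distance au plus $1$ ne peut pas augmenter le diamètre. Aucune difficulté analytique n'apparaît : la démonstration repose uniquement sur les résultats déjà obtenus sur les $p$-voisins et sur la construction de $X_{23}$ à partir de $X_{24}$.
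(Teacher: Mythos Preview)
Votre démonstration est correcte et constitue l'explicitation naturelle de ce que l'article laisse implicite : celui-ci énonce le corollaire sans preuve, le présentant comme conséquence directe de la proposition précédente. L'application $\pi$ que vous construisez coïncide avec $\phi_{23}$ (définie au \S\ref{4.4} et utilisée dans la preuve de la proposition), et votre argument --- surjectivité via la description de $X_{23}$ à partir de $X_{24}$, évitement de la classe de Leech par présence d'une racine, puis transport des chemins grâce à la proposition --- est exactement celui qu'on attendrait ; rien à reprendre.
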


Une dernière vérification de nos calculs provient de la proposition suivante :
\begin{prop} \label{adjoint} Soient $n=23$ ou $25$ et $p$ premier. Alors l'endomorphisme $\mathrm{T}_p$ de $\Z[X_n]$ est auto-adjoint pour le produit scalaire $(\ \vert \ )$ défini par :
$$(\overline{L}_1 \vert \overline{L}_2) = \vert \mathrm{O}(L_1) \vert \delta_{\overline{L}_1,\overline{L}_2},$$
où $\overline{L}_1,\overline{L}_2\in X_n$, $L_1\in \mathcal{L}_n$ a pour classe $\overline{L}_1$ dans $X_n$, $\mathrm{O}(L_1)$ est le groupe orthogonal de $L_1$, et $\delta_{\overline{L}_1,\overline{L}_2}=1$ si, et seulement si, $\overline{L}_1=\overline{L}_2$ et $0$ sinon.

Si l'on note $P_{23}$ (respectivement $P_{25}$) la matrice diagonale de $\mathrm{M}_{32}(\Z)$ (respectivement $\mathrm{M}_{121}(\Z)$) dont le $i$-ème terme diagonal est $\vert \mathrm{O}(L_i) \vert$ suivant la numérotation de la table \ref{rX23} (respectivement de la table \ref{rX25}), alors pour tout $p$ premier on a les égalités :
$$P_{23}\, \mathrm{T}_{23}(p) = \mathrm{T}^t_{23}(p)\, P_{23}\indent et \indent P_{25}\, \mathrm{T}_{25}(p) = \mathrm{T}^t_{25}(p)\, P_{25},$$
où pour $M\in \mathrm{M}_N(\Z)$ on désigne par $M^t$ la matrice transposée de $M$.
\end{prop}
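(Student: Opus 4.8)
The plan is to reduce the statement to a single combinatorial identity on neighbour counts and then establish that identity by an orbit‑counting argument. Write $T_n(p)=(t_{i,j}(p))$, so that $t_{i,j}(p)$ is the number of $p$‑voisins of $L_j$ isomorphic to $L_i$. Since $P_n$ is the diagonal matrix with entries $|\mathrm{O}(L_i)|$, the two claimed matrix equalities $P_n T_n(p)=T_n^t(p)P_n$ are, coefficient by coefficient, exactly the assertion
$$|\mathrm{O}(L_i)|\,t_{i,j}(p) \;=\; |\mathrm{O}(L_j)|\,t_{j,i}(p)\qquad (1\le i,j\le N),$$
and, by bilinearity and the definition of $(\ \vert\ )$, this is also precisely the equality $(\mathrm{T}_p(\overline{L}_j)\,\vert\,\overline{L}_i)=(\overline{L}_j\,\vert\,\mathrm{T}_p(\overline{L}_i))$, i.e. the self‑adjointness of $\mathrm{T}_p$. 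So the whole proposition comes down to the displayed identity, and that is what I would prove.

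For that I would work inside the fixed quadratic space $V=L_0\otimes\Q$, with $G:=\mathrm{O}(V)\simeq\mathrm{O}_n(\R)$ acting on $\mathcal{L}_n$, and use three elementary facts: (a) the stabiliser in $G$ of a lattice $M\in\mathcal{L}_n$ is exactly its (finite) isometry group $\mathrm{O}(M)$; (b) the relation ``$M$ is a $p$‑voisin of $M'$'' is symmetric in $M,M'$, which is immediate from the Proposition‑Définition of the $A$‑voisins applied to $A=\Z/p\Z$; and (c) $\mathrm{Vois}_p(M)$ is finite, of cardinal $c_n(p)$, by Proposition \ref{dvoisin}. Fix $i,j$ and consider the $G$‑set (with $G$ acting diagonally)
$$E=\{(M,M')\in\mathcal{L}_n\times\mathcal{L}_n \;:\; M'\in\mathrm{Vois}_p(M),\ \overline{M}=\overline{L}_j,\ \overline{M'}=\overline{L}_i\}.$$
The first projection $E\to G\cdot L_j$ is $G$‑equivariant onto a transitive $G$‑set with stabiliser $\mathrm{O}(L_j)$, so $E\simeq G\times_{\mathrm{O}(L_j)}F$ where $F=\{M'\in\mathrm{Vois}_p(L_j):M'\simeq L_i\}$ is a finite $\mathrm{O}(L_j)$‑set of cardinal $t_{i,j}(p)$. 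Hence $E$ has only finitely many $G$‑orbits, in bijection with the $\mathrm{O}(L_j)$‑orbits of $F$, with matching stabilisers (for $M'\in F$ one has $\mathrm{Stab}_G(L_j,M')=\mathrm{O}(L_j)\cap\mathrm{O}(M')=\mathrm{Stab}_{\mathrm{O}(L_j)}(M')$); summing $1/|\mathrm{Stab}_G|$ over the $G$‑orbits of $E$ and using orbit–stabiliser inside the finite group $\mathrm{O}(L_j)$ gives
$$\sum_{G\text{-orbites}\,\subset E}\frac{1}{|\mathrm{Stab}_G|}\;=\;\frac{|F|}{|\mathrm{O}(L_j)|}\;=\;\frac{t_{i,j}(p)}{|\mathrm{O}(L_j)|}.$$
Running the same computation through the second projection $E\to G\cdot L_i$, and using the symmetry of the $p$‑voisin relation to identify the fibre over $L_i$ with $\{M\in\mathrm{Vois}_p(L_i):M\simeq L_j\}$ (of cardinal $t_{j,i}(p)$), I get that the same sum equals $t_{j,i}(p)/|\mathrm{O}(L_i)|$. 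Comparing yields $|\mathrm{O}(L_i)|\,t_{i,j}(p)=|\mathrm{O}(L_j)|\,t_{j,i}(p)$, and the proposition follows.

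The only genuinely delicate point is the bookkeeping with the infinite $G$‑set $E$, but it is harmless: $E$ has finitely many $G$‑orbits, all with finite stabilisers, so the ``masse'' $\sum 1/|\mathrm{Stab}_G|$ is a finite sum of well‑defined terms. One may also bypass this entirely by viewing $\mathrm{T}_p\in\mathrm{H}(\mathrm{O}_n)$ as in \cite[ch. IV]{CL}: the general formula for the adjoint of a Hecke operator sends the double coset $[g]$ to $[g^{-1}]$, and the double coset attached to the $p$‑voisins is stable under $g\mapsto g^{-1}$ because a Weyl element exchanging the two hyperbolic coordinates lies in $\mathrm{O}_n(\widehat{\Z})$, so $\mathrm{T}_p$ is its own adjoint for the tautological pairing. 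Either argument is, moreover, insensitive to $p$ being prime and to the values $n=23,25$, so the same conclusion holds for every $\mathrm{T}_A$. Finally, I would stress that for the purposes of this paper the proposition serves as a consistency check: having the matrices $T_{23}(p)$ and $T_{25}(p)$ of \cite{MegTp}, one verifies directly that $P_{23}\,T_{23}(p)=T_{23}^t(p)\,P_{23}$ and $P_{25}\,T_{25}(p)=T_{25}^t(p)\,P_{25}$.
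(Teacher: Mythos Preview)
Your proof is correct. The paper itself does not give an argument at all: it simply refers to \cite{NV} and to \cite[ch.~III, \S 2]{CL}, where this self-adjointness is established in general. What you have written is essentially the classical mass/orbit-counting proof that one finds in those references (and which underlies the Eichler--Brandt theory more broadly): reduce to the scalar identity $|\mathrm{O}(L_i)|\,t_{i,j}(p)=|\mathrm{O}(L_j)|\,t_{j,i}(p)$, and obtain it by computing the ``mass'' of the $G$-set of ordered $p$-voisin pairs $(M,M')$ with prescribed isomorphism classes via each of the two projections, using the symmetry of the $p$-voisin relation. Your bookkeeping with $E\simeq G\times_{\mathrm{O}(L_j)}F$ and the orbit--stabiliser formula is clean and correct, and your remark that the argument goes through for any $\mathrm{T}_A$ and any $n$ is accurate. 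The alternative you sketch via $\mathrm{H}(\mathrm{O}_n)$ and the involution $[g]\mapsto[g^{-1}]$ is also the viewpoint adopted in \cite{CL}. In short: same result, same underlying idea as in the cited sources; you have supplied the details that the paper chose to outsource.
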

\begin{proof}
Voir \cite{NV} et \cite[ch. III, \S 2]{CL}.
\end{proof}

D'après les valeurs obtenues pour $T_{23}$ et $T_{25}$, on vérifie qu'il existe qu'une seule matrice diagonale $P_1$ et une seule matrice diagonale $P_2$ (à multiplication près par un scalaire) telles que : $P_{1}\, T_{23}= T^t_{23}\, P_{1}$ et $P_{2}\, T_{25}= T^t_{25}\, P_{2}$.

L'existence de $P_1$ et $P_2$ constitue déjà en soi une vérification de nos résultats. On s'assure de plus que les coefficients de $P_1$ et $P_2$ sont cohérents avec les valeurs des $\vert \mathrm{O}(L_i) \vert$ que l'on sait déterminer facilement. C'est notamment le cas lorsque $R(L_i)$ et $L_i$ ont même rang (en tant que $\Z$-modules), et dans se cas le groupe $\mathrm{O}(L_i)$ se déduit directement du groupe de Weyl de $R(L_i)$.

Prenons par exemple le cas de $L_1$ (suivant les notations de la table \ref{rX23}). Posons $M=\mathrm{D}_{22}\oplus A_1$. On obtient alors : $\mathrm{r\acute{e}s}\, M = \left( \Z/2\Z\, d_1 \oplus^\perp \Z/2\Z\, d_2 \right) \oplus^\perp \Z/2\Z a$ avec $q(d_1)=q(d_2)=3/4$ et $q(a)=1/4$. En particulier, il existe deux droites isotropes dans $\mathrm{r\acute{e}s}\, M$, engendrées respectivement par $d_1+a$ et $d_2+a$. Ainsi, il existe deux réseaux $L^+,L^-\in \mathcal{L}_{23}$ contenant $M$ : ces deux réseaux sont isomorphes, et sont échangés par toute isométrie de $M$ qui n'est pas dans le groupe de Weyl de $M$ (qui sont exactement les isométries de $M$ qui échangent $d_1$ et $d_2$). Et on déduit que $\vert \mathrm{O}(L^+)\vert = \vert \mathrm{O}(L^-)\vert = \frac{\vert \mathrm{O}(M) \vert}{2}=22!\, 2^{22}$.

Il est aussi facile de déterminer la valeur de $\vert\mathrm{O}(L)\vert$ pour certains éléments de $\mathcal{L}_{25}$. En effet, la proposition \ref{adjoint} est aussi vraie pour $n=24$, suivant \cite[ch. III, \S 2]{CL} par exemple. On en déduit les valeurs des $\vert\mathrm{O}(P)\vert$ pour $P\in \mathcal{L}_{24}$ grâce à la matrice de $\mathrm{T}_2$ sur $\Z[X_{24}]$, donnée dans \cite{NV} par exemple. Si l'on se donne $P\in \mathcal{L}_{24}$, alors le réseau $L=P\oplus \mathrm{A}_1$ est un élément de $\mathcal{L}_{25}$, et on a l'égalité : $\vert \mathrm{O}(L)\vert = 2\, \vert \mathrm{O}(P)\vert$. Pour s'en convaincre, il suffit de constater qu'un élément $\gamma\in\mathrm{O}(L)$ satisfait $\gamma (P)=P$ et $\gamma (\mathrm{A}_1)=\mathrm{A}_1$. 

Ceci permet de déterminer la valeur de $\vert \mathrm{O}(L) \vert$ pour les éléments $L\in \mathcal{L}_{25}$ de la forme $L\simeq P\oplus \mathrm{A}_1$ avec $P\in \mathcal{L}_{24}$, qui sont exactement les éléments satisfaisant $R(L)\simeq R\coprod \mathbf{A}_1$, où $R$ est le système de racines d'un élément de $\mathcal{L}_{24}$. Les différentes valeurs de $R$ ont été déterminées par Niemeier \cite{Nie} et Venkov \cite{Ven} : c'est un système de racine de type ADE équicoxeter de rang $24$, ou l'ensemble vide (lorsque $P$ est le réseau de Leech).

Notons au passage que la détermination des matrices $P_{23}$ et $P_{25}$ nous donne pour tout $L$ dans $\mathcal{L}_{23}$ ou $\mathcal{L}_{25}$ la quantité $\vert \mathrm{O}(L) \vert$. On donne dans \cite{MegTp} les matrices $P_{23}$ et $P_{25}$.

\subsection{Congruences à la Harder.}\label{4.3}
On souhaite établir des congruences entre les traces de paramètres de Langlands-Satake de certaines formes automorphes normalisées, comme c'est par exemple le cas dans la conjecture de Harder, exposée dans  \cite{Har} et déjà démontrée dans \cite[Ch. X, Théorème 4.4]{CL} :

\begin{theo}[Conjecture de Harder] \label{harder} Pour tout premier $p$, on a la congruence suivante :
$$D_{21,5}(p)\equiv D_{21}(p)+p^{13}+p^8\mathrm{\ mod\ }41.$$
\end{theo}

Dans \cite{CL}, cette congruence est exprimée sous la forme $\tau_{4,10}(p)\equiv \tau_{22}(p)+p^{13}+p^8\mathrm{\ mod\ }41$, qui est équivalente, du fait des égalités $D_{21,5}(p)=\tau_{4,10}(p)$ et $D_{21}(p)=\tau_{22}(p)$ déjà expliquées au paragraphe \ref{2.3}.\\

De telles congruences entre les fonctions $D_{w_1,\dots,w_n}$ proviennent de congruences entre les colonnes des matrices $V$ ou $W$. On utilise pour cela le lemme suivant :

\begin{lem} \label{mlié} Soient $m$ un entier quelconque, et $i,j\in \{1,\dots,32\}$ tels que $\mathrm{vect}_{\Z/m\Z}(v_i)=\mathrm{vect}_{\Z/m\Z}(v_j)$. Alors pour tout $p$ on a la congruence :
$$\lambda_i(p) \equiv \lambda_j(p)\mathrm{\ mod\ }m.$$

De même, s'il existe $i,j\in \{1,\dots,57\}$ tels que $\mathrm{vect}_{\Z/m\Z}(w_i)=\mathrm{vect}_{\Z/m\Z}(w_j)$, alors pour tout $p$ on a la congruence :
$$\mu_i(p) \equiv \mu_j(p)\mathrm{\ mod\ }m.$$

Enfin, s'il existe $i,j\in \{58,\dots,121\}$ tels que $\mathrm{vect}_{\Z/m\Z}(w_i)=\mathrm{vect}_{\Z/m\Z}(w_j)$, alors pour tout $p$ on a la congruence :
$$\mu_i(p) \equiv \mu_j(p)\mathrm{\ mod\ }m\Z[\sqrt{144169}].$$
\end{lem}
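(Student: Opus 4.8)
The plan is to reduce, modulo $m$, the identities expressing that the $v_i$ and $w_i$ are common eigenvectors of the Hecke operators, and then to use that two vectors with the same $\Z/m\Z$-span have proportional reductions while a vector with coprime coordinates has trivial annihilator in $\Z/m\Z$. Concretely, I would first recall from Proposition~\ref{lambdai} that each $v_i$ is a common eigenvector of every $\mathrm{T}_p\in\mathrm{H}(\mathrm{O}_{23})$, so that $T_{23}(p)\,v_i=\lambda_i(p)\,v_i$ holds in $\Q^{32}$ with $T_{23}(p)\in\mathrm{M}_{32}(\Z)$. Since the coordinates of $v_i$ are coprime there is $u\in\Z^{32}$ with $u\cdot v_i=1$, and pairing the identity with $u$ gives $\lambda_i(p)=u\cdot\big(T_{23}(p)\,v_i\big)\in\Z$; thus all the identities we need take place over $\Z$, and the congruences below make sense as congruences of integers.

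Next, write $x\mapsto\overline{x}$ for reduction modulo $m$, on vectors of $\Z^{32}$ and on matrices. The hypothesis $\mathrm{vect}_{\Z/m\Z}(v_i)=\mathrm{vect}_{\Z/m\Z}(v_j)$ gives in particular $\overline{v_j}=c\,\overline{v_i}$ for some $c\in\Z/m\Z$. Reducing the two eigenvector identities and using the $\Z/m\Z$-linearity of $\overline{T_{23}(p)}$, I would compute
\[
\overline{\lambda_j(p)}\,\overline{v_j}=\overline{T_{23}(p)}\,\overline{v_j}=c\,\overline{T_{23}(p)}\,\overline{v_i}=c\,\overline{\lambda_i(p)}\,\overline{v_i}=\overline{\lambda_i(p)}\,\overline{v_j},
\]
so that $\big(\lambda_i(p)-\lambda_j(p)\big)\overline{v_j}=0$ in $(\Z/m\Z)^{32}$. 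Finally, choosing $u'\in\Z^{32}$ with $u'\cdot v_j=1$ (again by coprimality of the coordinates of $v_j$), so that $\overline{u'}\cdot\overline{v_j}=1$ in $\Z/m\Z$, and pairing the vanishing relation with $\overline{u'}$, I obtain $\lambda_i(p)\equiv\lambda_j(p)\bmod m$, which is the first congruence.

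For the $w_i$ with $i,j\in\{1,\dots,57\}$ the argument is word-for-word the same, now in $\Z^{121}$ with the integral matrices $T_{25}(p)$ and Proposition~\ref{mui}. For $i,j\in\{58,\dots,121\}$ I would carry out the identical computation over the ring $R=\Z[\sqrt{144169}]$: the vectors $w_i$ lie in $R^{121}$, their coordinates generate the unit ideal of $R$ (this is what ``coordonnées premières entre elles'' means here), and $T_{25}(p)$ has entries in $\Z\subset R$ since it counts $p$-voisins; the same pairing argument then first gives $\mu_i(p)\in R$ and then $\big(\mu_i(p)-\mu_j(p)\big)\overline{w_j}=0$ in $(R/mR)^{121}$, whence $\mu_i(p)\equiv\mu_j(p)\bmod m\,R$. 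There is no genuine obstacle; the only step that deserves a word of care is this last one, namely that coprimality of the coordinates of $w_j$ over $R$ still produces an $R$-linear functional carrying $w_j$ to $1$ — which is precisely the hypothesis, read as ``the coordinates generate $R$'' rather than merely ``share no common irreducible factor''.
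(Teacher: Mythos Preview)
Your argument is correct and is essentially the same as the paper's: both reduce the eigenvector identities $T(p)v=\lambda(p)v$ modulo $m$, use the hypothesis to relate $\overline{v_i}$ and $\overline{v_j}$, and conclude via the coprimality of the coordinates of $v_j$. The only cosmetic difference is that the paper writes $v_i-\alpha v_j\in(m\Z)^{32}$ with $\alpha\in(\Z/m\Z)^\times$ and applies $T_{23}(p)$ to this difference, whereas you substitute $\overline{v_j}=c\,\overline{v_i}$ directly; your explicit remark that $\lambda_i(p)\in\Z$ (via a Bézout vector $u$) and your care in the $\Z[\sqrt{144169}]$ case are welcome clarifications that the paper leaves implicit.
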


\begin{proof}
Supposons par exemple qu'il existe un entier $m$, et des entiers $i,j\in \{1,\dots,32\}$ tels que $\mathrm{vect}_{\Z/m\Z}(v_i)=\mathrm{vect}_{\Z/m\Z}(v_j)$. Par définition, les coordonnées de $v_i$ sont premières entre elles (et il en va de même pour celles de $v_j$). En particulier, l'égalité précédente implique qu'il existe $\alpha \in (\Z/m\Z)^*$ tel que $v_i-\alpha v_j \in (m\Z)^{32}$. En faisant agir $T_{23}(p)\in \mathrm{M}_{32}(\Z)$ sur ce vecteur, on déduit les implications suivantes :
$$ \begin{aligned}
v_i-\alpha v_j\in (m\Z)^{32} &\Rightarrow T_{23}(p) (v_i-\alpha v_j)\in (m\Z)^{32} \\
&\Rightarrow \lambda_i(p) v_i -\alpha \lambda_j(p) v_j \in (m\Z)^{32} \\
&\Rightarrow \lambda_i(p) (v_i-\alpha v_j) + \alpha (\lambda_i(p) -\lambda_j(p))v_j \in (m\Z)^{32} \\
&\Rightarrow \alpha (\lambda_i(p) -\lambda_j(p))v_j \in (m\Z)^{32} \\
&\Rightarrow (\lambda_i(p) -\lambda_j(p))v_j \in (m\Z)^{32} \\
&\Rightarrow\lambda_i(p) -\lambda_j(p)\in m\Z .\\
\end{aligned}$$
(où la dernière implication utilise que les coordonnées de $v_j$ sont relativement premières entre elles).

La démonstration est identique si l'on possède des entiers $m\in \Z$ et $i,j\in \{1,\dots,121\}$ tels que $\mathrm{vect}_{\Z/m\Z}(w_i)=\mathrm{vect}_{\Z/m\Z}(w_j)$.

Dans le troisième cas, on peut remplacer l'hypothèse $\mathrm{vect}_{\Z/m\Z}(w_i)=\mathrm{vect}_{\Z/m\Z}(w_j)$ par $\mathrm{vect}_{\Z[\sqrt{144169}]/m\Z[\sqrt{144169}]}(w_i)=\mathrm{vect}_{\Z[\sqrt{144169}]/m\Z[\sqrt{144169}]}(w_j)$ et on obtient le même résultat. Cependant, on n'utilisera ce cas uniquement lorsque $\mu_i(p)-\mu_j(p)\in \Z$.
\end{proof}

Il suffit ensuite d'utiliser les proposition \ref{lambdai} et \ref{mui} pour déduire les congruences cherchées. Du fait des propositions \ref{lambdai} et \ref{mui}, les fonctions $D_{w_1,\dots,w_n}$ que l'on fera intervenir sont exactement celles provenant des représentations $\Delta_{w_1,\dots,w_n}$ présentes dans les tables \ref{psi23}, \ref{psi25} et \ref{psi252}. Par exemple, la proposition suivante améliore la congruence de Harder exposée précédemment :

\begin{theo} \label{harderopti} Pour tout nombre premier $p$, on a la congruence :
$$D_{21,5}(p)\equiv D_{21}(p)+p^{13}+p^8\mathrm{\ mod\ }9840.$$

De plus, cette congruence est optimale, dans le sens où on ne peut pas remplacer $9840$ par un de ses multiples.
\end{theo}

\begin{proof}
Pour obtenir la congruence précédente, il suffit dans un premier temps de constater que, pour $(m,i,j)=(9840,26,28)$, on a l'égalité : $\mathrm{vect}_{\Z/m\Z}(v_i)=\mathrm{vect}_{\Z/m\Z}(v_j)$. Le lemme \ref{mlié} nous donne pour tout premier $p$ la congruence $\lambda_{26}(p) \equiv \lambda_{28}(p)\mathrm{\ mod\ }9840$. D'après la table \ref{psi23} et la proposition \ref{lambdai}, on a les égalités :

$$
\begin{aligned}
\lambda_{26}(p) &= p^{\frac{21}{2}}\mathrm{Trace}\left( c_p\left( \Delta_{21} \oplus \Delta_{19,7} \oplus \Delta_{17} \oplus \Delta_{15} \oplus \Delta_{11} [3]\oplus [6] \right) \vert V_{\mathrm{St}}\right) \\
&= D_{21}(p)+p\, D_{19,7}(p) + p^2\, D_{17}(p)+p^3\, D_{15}(p)+p^4\, (1+p+p^2)\, D_{11}(p)\\
&\indent + p^8\, (1+p+p^2+p^3+p^4+p^5) ,\\
\lambda_{28}(p) & = p^{\frac{21}{2}}\mathrm{Trace}\left( c_p\left( \Delta_{21,5} \oplus \Delta_{19,7} \oplus \Delta_{17} \oplus \Delta_{15} \oplus \Delta_{11} [3]\oplus [4] \right) \vert V_{\mathrm{St}}\right) \\
&= D_{21,5}(p)+p\, D_{19,7}(p) + p^2\, D_{17}(p)+p^3\, D_{15}(p)+p^4\, (1+p+p^2)\, D_{11}(p)\\
&\indent + p^9\, (1+p+p^2+p^3) .\\
\end{aligned}
$$

Et ainsi :
$$\lambda_{26}(p)-\lambda_{28}(p) = D_{21}(p)+p^{13}+p^8-D_{21,5}(p)$$

qui est la congruence cherchée. Notons au passage que l'on a aussi l'égalité $\mathrm{vect}_{\Z/m\Z}(v_i)=\mathrm{vect}_{\Z/m\Z}(v_j)$ pour $(m,i,j)=(9840,27,29)$, qui nous donne la même congruence.

Pour constater que cette congruence est optimale, il suffit d'évaluer la quantité $\lambda_{26}(p)-\lambda_{28}(p)=D_{21}(p)+p^{13}+p^8-D_{21,5}(p)$ pour certaines valeurs de $p$. On constate par exemple que $\lambda_{26}(2)-\lambda_{28}(2)=9840$, ce qui prouve l'optimalité.
\end{proof}

L'ensemble des congruences que l'on a trouvées sont données par le théorème suivant :
\begin{theo} Pour tout nombre premier $p$, les congruences suivantes sont vérifiées :

\begin{enumerate}
\item[$(i)$] $D_{19,7}(p) \equiv D_{19}(p)+p^6+p^{13}\mathrm{\ mod\ }8712$ ;
\item[$(ii)$] $D_{21,5}(p) \equiv D_{21}(p)+p^8+p^{13}\mathrm{\ mod\ }9840$ ;
\item[$(iii)$] $D_{21,9}(p) \equiv (1+p^6)\,D_{15}(p)\mathrm{\ mod\ }12696$ ;
\item[$(iv)$] $D_{21,9}(p) \equiv D_{21}(p)+p^6+p^{15}\mathrm{\ mod\ }31200$ ;
\item[$(v)$] $D_{21,13}(p) \equiv (1+p^4)\, D_{17}(p)\mathrm{\ mod\ }8736$ ;
\item[$(vi)$] $D_{21,13}(p) \equiv D_{21}(p)+p^4+p^{17}\mathrm{\ mod\ }10920$ ;
\item[$(vii)$] $D_{23,7}(p)\equiv (1+p^8)\, D_{15}(p)\mathrm{\ mod\ }8972$ ;
\item[$(viii)$] $D_{23,13,5}(p)\equiv D_{23,13}(p)+p^9+p^{14}\mathrm{\ mod\ }5472$ ;
\item[$(ix)$] $D_{23,15,7}(p)\equiv (1+p^4)\, D_{19}(p)+p^8+p^{15}\mathrm{\ mod\ }2184$ ;
\item[$(x)$] $D_{23,15,7}(p)\equiv D_{23,7}(p)+p^4\, D_{15}(p)\mathrm{\ mod\ }5856$ ;
\item[$(xi)$] $D_{23,17,9}(p)\equiv D_{23,9}(p)+p^3\, D_{17}(p)\mathrm{\ mod\ }2976$ ;
\item[$(xii)$] $D_{23,19,3}(p)\equiv (1+p^2)\, D_{21}(p)+p^{10}+p^{13}\mathrm{\ mod\ }7872$ ;
\item[$(xiii)$] $D_{23,19,11}(p)\equiv (1+p^2)\, D_{21}(p)+p^6+p^{17}\mathrm{\ mod\ }16224$.
\end{enumerate} 

De plus, mis à part les points $(vi),(vii),(xi)$ et $(xiii)$, les congruences ci-dessus sont optimales, dans le sens où le module qui intervient ne peut pas être remplacé par un de ses multiples.
\end{theo}
\begin{proof}
On donne pour chacun des points de la proposition la congruence de la forme $\lambda_i\equiv \lambda_j\mathrm{\ mod\ }m$ (ou $\mu_i\equiv \mu_j\mathrm{\ mod\ }m$), qui se déduit du lemme \ref{mlié}. On précise lorsqu'il a fallu utiliser des résultats supplémentaires.

Certaines congruences qui apparaissent sont des ``multiplications par $p$" des congruences cherchées : on peut évaluer nos congruences jusqu'à $p=67$, et ainsi on pourra ``diviser une congruence modulo $m$ par $p$" dès lors que les diviseurs premiers de $m$ seront inférieurs ou égaux à $67$. C'est aussi le fait d'évaluer ces congruences jusqu'à $p=67$ qui nous permet dans certains cas de dire si elles sont optimales.

$(i)$ : $\lambda_{22}(p)\equiv \lambda_{26}(p)\mathrm{\ mod\ }17424$ ce qui nous donne la congruence $p\,D_{19,7}(p) \equiv p\,D_{19}(p)+p^7+p^{14}\mathrm{\ mod\ }17424$. Il suffit d'évaluer la quantité $D_{19,7}(p) \equiv D_{19}(p)+p^6+p^{13}$ pour les $p$ premiers divisant $17424$ pour déduire la congruence cherchée, qui est optimale.

$(ii)$ et $(iii)$ : $\lambda_{26}(p)\equiv \lambda_{28}(p)\mathrm{\ mod\ }9840$ et $\lambda_{16}(p)\equiv \lambda_{19}(p)\mathrm{\ mod\ }12696$ donnent directement les congruences cherchées, qui sont optimales.

$(iv)$ : $\lambda_{20}(p)\equiv \lambda_{14}(p)\mathrm{\ mod\ }7800$ et $\mu_{15}(p)\equiv \mu_{24}(p)\mathrm{\ mod\ }20800$ nous donnent la congruence $p\, D_{21,9}(p) \equiv p\, D_{21}(p)+p^7+p^{16}\mathrm{\ mod\ }62400$. Il suffit d'évaluer la quantité $D_{21,9}(p) \equiv D_{21}(p)+p^6+p^{15}$ pour les $p$ premiers divisant $62400$ pour déduire la congruence cherchée, qui est optimale.

$(v)$ : $\lambda_{11}(p)\equiv \lambda_{12}(p)\mathrm{\ mod\ }4368$ et $\lambda_{17}(p)\equiv \lambda_{18}(p)\mathrm{\ mod\ }416$ donnent directement la congruence cherchée, qui est optimale.

$(vi)$ : $\lambda_{10}(p)\equiv \lambda_{7}(p)\mathrm{\ mod\ }2730$ et $\mu_{64}(p)\equiv \mu_{67}(p)\mathrm{\ mod\ }8$ nous donnent la congruence $p\,D_{21,13}(p) \equiv p\,D_{21}(p)+p^5+p^{18}\mathrm{\ mod\ }10920$ et il suffit d'évaluer la quantité $D_{21,13}(p) \equiv D_{21}(p)+p^4+p^{17}$ pour les $p$ premiers divisant $10920$ pour obtenir la congruence cherchée. Cependant, on n'a aucune certitude sur l'optimalité de la congruence.

$(vii)$ à $(xii)$ : $\mu_{10}\equiv \mu_{18}\mathrm{\ mod\ }8972$, $\mu_{44}\equiv \mu_{37}\mathrm{\ mod\ }5472$, $\mu_{21}\equiv \mu_{36}\mathrm{\ mod\ }2184$, $\mu_{41}\equiv \mu_{38}\mathrm{\ mod\ }5856$, $\mu_{35}\equiv \mu_{40}\mathrm{\ mod\ }2976$ et $\mu_{53}\equiv \mu_{56}\mathrm{\ mod\ }7872$ nous donnent directement les congruences cherchées. Il est facile de voir que les congruences $(viii)$, $(ix)$, $(x)$ et $(xii)$ sont optimales.

$(xiii)$ : les congruences $\mu_{11}\equiv \mu_{6}\mathrm{\ mod\ }5408$, $\mu_{11}\equiv \mu_{12}\mathrm{\ mod\ }96$, $\mu_{12}\equiv \mu_{6}\mathrm{\ mod\ }44224$ et $\mu_{13}\equiv \mu_{7}\mathrm{\ mod\ }8292$ nous donnent les congruences : $D_{23,19,11}(p)\equiv (1+p^2)\, D_{21}(p)+p^6+p^{17}\mathrm{\ mod\ }5408$, $D_{23,19,11}(p)\equiv (1+p^2)\, D_{21}(p)+p^6\,D_{11}(p)\mathrm{\ mod\ }96$ et $p^6\, D_{11}(p)\equiv p^6+p^{17}\mathrm{\ mod\ }132672$ (qui est une ``multiplication par $p^6$" de la congruence de Ramanujan). On en déduit ainsi la congruence cherchée.
\end{proof}

Outre la congruence $(ii)$ (qui est une optimisation de la conjecture de Harder), deux congruences attirent notre attention.

La congruence $(vii)$ est la seule faisant intervenir une forme de Siegel de genre $2$ qui n'avait pas été traitée dans \cite{CL} ou \cite{Tay}. Suivant la méthode de \cite{Die} développée dans \cite{Tay}, on a le résultat suivant :
\begin{prop} Soit $S$ l'espace des formes modulaires paraboliques de Siegel de genre $2$ de poids $\mathrm{Sym}^6\C^2\otimes \mathrm{det}^{10}$ (qui est un espace de dimension $1$). Pour $l$ un nombre premier, on note $\rho_l$ la représentation $l$-adique associée par la construction de Weissauer (voir \cite{Wei} et \cite[Théorème 1.1]{Tay}), et $\overline{\rho}_l$ la $\F_l$-représentation résiduelle associée (voir par exemple \cite[\S X.1]{CL}).

Alors pour $l>23$, la représentation $\overline{\rho}_l$ est irréductible si, et seulement si, $l\neq 2243$.
\end{prop}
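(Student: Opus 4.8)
The plan is to split the statement into its two implications: the reducibility at $l=2243$ comes straight out of congruence $(vii)$, while the irreducibility of $\overline{\rho}_l$ for the remaining $l>23$ is obtained by the Fontaine--Laffaille-plus-Serre argument of \cite{Die}, \cite{Tay}, which is exactly what makes the hypothesis $l>23$ necessary. First I would record the properties of $\rho_l$. The one-dimensional space $S$ is spanned by the eigenform whose standard parameter is the element $\Delta_{23,7}$ of \S\ref{2.3} (with the normalisation there, $\tau_{6,10}=D_{23,7}$), so $\rho_l\colon\mathrm{Gal}(\overline{\Q}/\Q)\to\mathrm{GSp}_4(\overline{\Q}_l)$ is absolutely irreducible, unramified outside $l$, crystalline at $l$ with Hodge--Tate weights $\{0,8,15,23\}$, with similitude character $\overline{\chi}_l^{23}$ (the motivic weight being $23$), and $\mathrm{tr}\,\rho_l(\mathrm{Frob}_p)=D_{23,7}(p)$, together with the analogous formulae for the other coefficients of the characteristic polynomial of $\rho_l(\mathrm{Frob}_p)$. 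Hence $\overline{\rho}_l$ is unramified outside $l$, self-dual up to the twist $\overline{\chi}_l^{23}$, and torsion-crystalline at $l$ with the same Hodge--Tate weights; since $l>23$ exceeds the spread of these weights, Fontaine--Laffaille theory applies to every Jordan--Hölder constituent of $\overline{\rho}_l$.

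For the ``only if'' direction, suppose $l>23$ and $\overline{\rho}_l$ is reducible, so that $\overline{\rho}_l^{\mathrm{ss}}$ splits nontrivially. Each irreducible constituent is then crystalline at $l$ with Hodge--Tate weights contained in $\{0,8,15,23\}$; by Serre's conjecture (Khare--Wintenberger) for the two-dimensional constituents and the classification of level-$1$ cuspidal eigenforms --- carried out in this context in \cite{Die} and \cite{Tay} --- the only possible constituents are the powers $\overline{\chi}_l^{i}$ with $i\in\{0,8,15,23\}$, the twists $\overline{\rho}_{16,l}\otimes\overline{\chi}_l^{j}$ of the residual representation of the weight-$16$ form, and the twists $\overline{\rho}^{\pm}_{24,l}\otimes\overline{\chi}_l^{j}$ of those of the two Galois-conjugate weight-$24$ forms, since the only differences of two elements of $\{0,8,15,23\}$ that equal $k-1$ for some $k$ with $S_k(\mathrm{SL}_2(\Z))\neq 0$ are $15$ and $23$. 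Imposing self-duality up to $\overline{\chi}_l^{23}$ and matching Hodge--Tate weights then leaves only finitely many candidate semisimplifications $C$: essentially $\overline{\rho}_{16,l}\oplus(\overline{\rho}_{16,l}\otimes\overline{\chi}_l^{8})$, the two representations $\overline{\chi}_l^{8}\oplus\overline{\chi}_l^{15}\oplus\overline{\rho}^{\pm}_{24,l}$, and $1\oplus\overline{\chi}_l^{8}\oplus\overline{\chi}_l^{15}\oplus\overline{\chi}_l^{23}$, together with their obvious variants.

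For each candidate $C$, the function $p\mapsto D_{23,7}(p)-\mathrm{tr}\,C(\mathrm{Frob}_p)$ takes values in the algebraic integers and is not identically zero, since $\Delta_{23,7}$ is cuspidal on $\mathrm{PGL}_4$ and hence differs from the isobaric sum attached to $C$; so if $\overline{\rho}_l^{\mathrm{ss}}\cong C$ then, by Chebotarev and Brauer--Nesbitt, $l$ divides all of its values. Evaluating at the primes $p\le 67$, where the relevant Hecke eigenvalues are known (in \cite{Meg} for $D_{23,7}$, in \cite{CL} for $D_{15}$ and for $\tau^{\pm}_{24}$), yields for each $C$ an explicit nonzero integer $N_C$ with $l\mid N_C$; the Ramanujan bounds of Proposition \ref{rama} ensure that finitely many evaluations already capture every prime divisor. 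A direct inspection of the $N_C$ then shows that $2243$ is the only prime exceeding $23$ dividing any of them --- it divides $N_C$ for $C=\overline{\rho}_{16,l}\oplus(\overline{\rho}_{16,l}\otimes\overline{\chi}_l^{8})$, in accordance with $8972=2^2\cdot 2243$ in congruence $(vii)$ --- so a reducible $l>23$ is necessarily $2243$. Conversely, congruence $(vii)$ gives $D_{23,7}(p)\equiv(1+p^{8})D_{15}(p)\bmod 2243$ for all $p$, hence by Chebotarev and Brauer--Nesbitt $\overline{\rho}_{2243}^{\mathrm{ss}}\cong\overline{\rho}_{16,2243}\oplus(\overline{\rho}_{16,2243}\otimes\overline{\chi}_{2243}^{8})$ and $\overline{\rho}_{2243}$ is reducible. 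The main obstacle is the enumeration step: one must be certain that the admissible constituents --- and hence the candidate semisimplifications --- are exactly this finite list (this is the Fontaine--Laffaille-plus-Serre input of \cite{Die}, \cite{Tay}, and the reason for the restriction $l>23$), and then carry out carefully the resulting finite gcd computations, checking that no prime $>23$ other than $2243$ survives.
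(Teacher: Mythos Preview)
Your argument is correct and follows the same route as the paper: classify the possible Jordan--Hölder constituents of $\overline{\rho}_l$ via Fontaine--Laffaille and Serre (this is the method of \cite{Die}, \cite{Tay}), reduce each reducibility type to a gcd computation on traces at small primes, and observe that $2243$ is the only prime $>23$ surviving, while congruence $(vii)$ supplies the converse. The paper is more concise because it invokes directly the four reducibility-type quantities $A_{6,10}(p),B_{6,10}(p),C_{6,10}(p),D_{6,10}(p)$ already introduced in \cite{Tay} and computes their gcds for $p\in\{2,3,5\}$ only; your version unpacks that classification by hand and evaluates at more primes than necessary, but the content is the same.
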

\begin{proof}
Il suffit simplement d'évaluer les quantités $A_{6,10}(p)$, $B_{6,10}(p)$, $C_{6,10}(p)$ et $D_{6,10}(p)$ (suivant les notations de \cite{Tay}) pour certaines valeurs de $p$, et de regarder les diviseurs supérieurs à $23$ du $\mathrm{pgcd}$ de ces quantités. On trouve les résultats suivants :
$$\begin{array}{rcl}
\mathrm{pgcd}\left( \left\{ A_{6,10}(p) \, \vert\, p\in \{2,3,5\} \right\} \right) & = & 1 \\
\mathrm{pgcd}\left( \left\{ B_{6,10}(p) \, \vert\, p\in \{2,3,5\} \right\} \right) & = & 2^9\cdot 3^3\cdot 5\cdot 11\cdot 13^2\\
\mathrm{pgcd}\left( \left\{ C_{6,10}(p) \, \vert\, p\in \{2,3,5\} \right\} \right) & = & 2^{11}\cdot 3^2\cdot 2243\\
\mathrm{pgcd}\left( \left\{ D_{6,10}(p) \, \vert\, p\in \{2,3,5\} \right\} \right) & = & 2^8\cdot 3^2\cdot 5^2\\
\end{array}$$
\noindent ce qui donne bien le résultat voulu.

Comme il y a seulement les quantités $C_{6,10}(p)$ qui font intervenir le nombre premier $l=2243$, on en déduit que la seule réductibilité de $\overline{\rho}_{l}$ est du type $\pi_1\oplus \pi_1^*\otimes \chi_l^{23}$, où $\pi_1$ est une représentation galoisienne irréductible de dimension $2$ provenant d'une forme modulaire (suivant la construction classique de Serre \cite{Ser} et Deligne \cite{Del}) dont les poids de l'ensemble des poids de l'inertie modérée est $\{0,15\}$. La représentation $\pi_1$ est nécessairement celle associée à l'unique forme modulaire parabolique normalisée de poids $16$.

Ainsi, la congruence $(vii)$ est la seule congruence possible provenant de la réductibilité des représentations $\overline{\rho}_l$, telle que le module $m$ possède un diviseur premier $l>23$.
\end{proof}

La seconde congruence qui attire notre attention est la congruence $(viii)$. En effet, elle apparaissait déjà dans \cite[\S 6, Example 3]{BDM} sous la forme plus faible de la conjecture :
$$\left( \forall \, p\text{ premier}\right)\  D_{23,13,5}(p)\equiv D_{23,13}(p)+p^9+p^{14}\ \mathrm{mod}\ 19.$$
Ainsi, en constatant que $5472=2^5\cdot 3^2\cdot 19$, la congruence $(viii)$ apparaît donc non seulement comme un démonstration de la conjecture précédente, mais aussi comme une amélioration comme on a pu remplacer $19$ par un de ses multiples.

\subsection{Une conjecture de Gan-Gross-Prasad.}\label{4.4}
\indent Soit $n\equiv -1,0\mathrm{\ mod\ }8$. Alors on possède une application naturelle $\phi_n:X_n\rightarrow X_{n+1}$ définie de la manière suivante :

- si $n\equiv -1\mathrm{\ mod\ }8$ : on se donne $L\in \mathcal{L}_n$, et on note $\overline{L}$ sa classe dans $X_n$. Alors $M=L\oplus A_1$ est un réseau de $\R^{n+1}$ tel que $\mathrm{r\acute{e}s}M =\mathrm{r\acute{e}s}L \oplus \mathrm{r\acute{e}s} A_1\simeq \Z/2\, e_1 \oplus \Z/2\, e_2$ (avec $q(e_1)=3/4$ et $q(e_2)=1/4$). En particulier, $\mathrm{r\acute{e}s}M$ possède une unique droite isotrope (celle engendrée par $e_1+e_2$) : l'image réciproque de cette droite par la projection naturelle $M^\sharp \rightarrow \mathrm{r\acute{e}s}M$ est un réseau $P\in \mathcal{L}_{n+1}$ dont la classe $\overline{P}$ dans $X_{n+1}$ ne dépend que de $\overline{L}$. On pose $\phi_n(\overline{L}) = \overline{P}$.

- si $n\equiv 0\mathrm{\ mod\ }8$ : on se donne $L\in \mathcal{L}_n$, et on note $\overline{L}$ sa classe dans $X_n$. Alors $P=L\oplus A_1$ est un élément de $\mathcal{L}_{n+1}$ dont la classe $\overline{P}$ dans $X_{n+1}$ ne dépend que de $\overline{L}$. On pose $\phi_n(\overline{L}) = \overline{P}$.\\

On sait déjà que, pour $n=23,24$ ou $25$, deux éléments de $\mathcal{L}_n$ sont isomorphes si, et seulement si, leurs systèmes de racines sont isomorphes. Les application $\phi_{23}$ et $\phi_{24}$ se déduisent directement de cette constatation :

- pour $\phi_{23}$ : soient $L\in \mathcal{L}_{23}$, $P\in \mathcal{L}_{24}$ et $\overline{L},\overline{P}$ leurs classes respectives dans $X_{23}$ et $X_{24}$. Alors $\phi_{23}(\overline{L})=\overline{P}$ si, et seulement si, il existe $\alpha\in R(P)$ tel que $L\simeq P\cap \alpha^\perp$. L'application $\phi_{23}$ se déduit donc directement de la table \ref{RX23}.

- pour $\phi_{25}$ : soient $L\in \mathcal{L}_{24}$, $P\in \mathcal{L}_{25}$ et $\overline{L},\overline{P}$ leurs classes respectives dans $X_{24}$ et $X_{25}$. Alors $\phi_{24}(\overline{L})=\overline{P}$ si, et seulement si, $R(P)\simeq R(L)\oplus \mathbf{A}_1$.\\

Les applications $\phi_n: X_n\rightarrow X_{n+1}$ ainsi définies induisent des applications $\C [X_n] \rightarrow \C[X_{n+1}]$. Du fait des isomorphismes $\C^{X_{n}}\simeq \C[X_n]^*$ et $\C^{X_{n+1}}\simeq \C[X_{n+1}]^*$, on déduit que $\phi_n$ définit une application : $\widetilde{\phi}_n : \C^{X_{n+1}} \rightarrow \C^{X_{n}}$. Pour simplifier les notations, si $f\in \C^{X_{n+1}}$, on note $f\vert_{X_n}=\widetilde{\phi}_n (f)$.
\begin{defi} Soient $n\equiv -1\mathrm{\ mod\ }8$ (respectivement $n\equiv 0\mathrm{\ mod\ }8$), et $\pi\in \Pi_{\mathrm{disc}}(\mathrm{O}_{n+1})$ (respectivement $\pi\in \Pi_{\mathrm{cusp}}(\mathrm{SO}_{n+1})$) tel que $\pi_{\infty}=\C$. On lui associe le sous-ensemble $\mathrm{Res}\, \pi$ de $\Pi_{\mathrm{cusp}}(\mathrm{SO}_n)$ (respectivement $\Pi_{\mathrm{disc}}(\mathrm{O}_{n})$) défini comme suit.

Soit $f\in \mathcal{M}_\C (\mathrm{O}_{n+1})$ qui engendre $\pi$ (en particulier, $f$ est propre pour $\mathrm{H}(\mathrm{O}_{n+1})$). On écrit $f\vert_{X_n}=f_1+\dots+f_r$, où les $f_i$ sont propres pour $\mathrm{H}(\mathrm{O}_n)$. Alors $\pi'\in \mathrm{Res}\, \pi$ si, et seulement si, l'un des $f_i$ ci-dessus engendre $\pi'$.
\end{defi}
\begin{prop} Les tables \ref{res23}, \ref{res24} et \ref{res242} donnent les paramètres standards des éléments de $\mathrm{Res}\, \pi$ pour $\pi\in \Pi_{\mathrm{disc}}(\mathrm{O}_{24})$ ou $\pi \in \Pi_{\mathrm{cusp}}(\mathrm{SO}_{25})$, lorsque $\pi_\infty$ est trivial.
\end{prop}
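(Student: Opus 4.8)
The plan is to reduce the statement to a finite linear‑algebra computation carried out in the Hecke eigenbases already produced above. First I would record that $\phi_n$ is entirely explicit as a map of finite sets: $\phi_{23}:X_{23}\to X_{24}$ is read off Table~\ref{RX23} (it sends $\overline L$ to the class of the unique even unimodular lattice containing $L\oplus\mathrm{A}_1$), and $\phi_{24}:X_{24}\to X_{25}$ sends $\overline L$ to the unique class $\overline P$ with $R(P)\simeq R(L)\coprod\mathbf{A}_1$; hence the induced maps $\C[X_n]\to\C[X_{n+1}]$ and their duals $\widetilde\phi_n:\C^{X_{n+1}}\to\C^{X_n}$ are explicit matrices. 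On the source side the relevant space $\mathcal M_\C(\mathrm O_{n+1})$ of level‑one forms with trivial weight is $\C^{X_{n+1}}$, and since the eigenvalues of $\mathrm T_2$ are pairwise distinct the Hecke algebra $\mathrm H(\mathrm O_{n+1})$ has simple spectrum; combined with Proposition~\ref{adjoint} (valid also for $n+1=24$), this gives $\C^{X_{n+1}}$ a canonical Hecke eigenbasis, each member generating exactly one $\pi$ of $\Pi_{\mathrm{disc}}(\mathrm O_{n+1})$ (resp.\ $\Pi_{\mathrm{cusp}}(\mathrm{SO}_{n+1})$) with trivial archimedean component, the eigenline of $\pi$ being the image of the corresponding eigenvector of $\C[X_{n+1}]$. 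For $n+1=24$ this eigenbasis and the standard parameters of the $24$ representations it generates are \cite[Table~C.5]{CL}; for $n+1=25$ it is $(w_i)$ with parameters in Tables~\ref{psi25} and~\ref{psi252} (Proposition~\ref{mui}); and for $X_{23}$ it is $(v_i)$ with parameters in Table~\ref{psi23} (Proposition~\ref{lambdai}).

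The heart of the argument is then routine. Fixing $\pi$ with trivial $\pi_\infty$, say $\pi\in\Pi_{\mathrm{disc}}(\mathrm O_{24})$, and letting $f\in\C^{X_{24}}$ be the eigenform it generates, I would compute $f|_{X_{23}}=\widetilde\phi_{23}(f)=f\circ\phi_{23}\in\C^{X_{23}}$, expand it in the eigenbasis attached to the $v_i$, say $f|_{X_{23}}=\sum_i c_i f_i$ with $f_i$ the eigenform generating $\pi_i\in\Pi_{\mathrm{cusp}}(\mathrm{SO}_{23})$ (all the $v_i$ being cuspidal by Proposition~\ref{lambdai}), and note that this is the unique decomposition into $\mathrm H(\mathrm O_{23})$‑eigenforms because the spectrum is simple. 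By the definition of $\mathrm{Res}$ this gives $\mathrm{Res}\,\pi=\{\pi_i:c_i\neq 0\}$, whose standard parameters are read off Table~\ref{psi23} (equivalently \cite[Table~C.7]{CL}). Running over the $24$ representations of $\Pi_{\mathrm{disc}}(\mathrm O_{24})$ fills the corresponding table. The case $\pi\in\Pi_{\mathrm{cusp}}(\mathrm{SO}_{25})$ is identical, with $(X_{25},(w_i),\phi_{24})$ in place of $(X_{24},\cdot,\phi_{23})$: one expands $f\circ\phi_{24}\in\C^{X_{24}}$ in the eigenbasis of $\C^{X_{24}}$ and reads, from \cite[Table~C.5]{CL}, the standard parameters of the $\pi'\in\Pi_{\mathrm{disc}}(\mathrm O_{24})$ occurring with nonzero coefficient. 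Running over the $121$ representations $\pi$ produces Tables~\ref{res23}, \ref{res24} and~\ref{res242}.

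Since nothing beyond the explicit description of $\phi_n$, the simplicity of the Hecke spectra, and the already‑cited parameter tables enters, the work is essentially bookkeeping, and the two delicate points are the only real obstacles. First, one must align the indexation and normalization of the eigenvectors used here with those of \cite[Tables~C.5, C.7]{CL} and \cite[Appendix~D]{CR}; this is unambiguous because all Hecke eigenvalues are distinct, so matching the already‑computed $\mathrm T_2$‑eigenvalues pins down the correspondence, exactly as in Propositions~\ref{lambdai} and~\ref{mui}. Second, one must treat the cases $\widetilde\phi_n(f)=0$: there $f|_{X_n}=0$, the sum $f_1+\dots+f_r$ is empty and $\mathrm{Res}\,\pi=\emptyset$ — which is what the tables record — and this occurs precisely when $f$ is orthogonal, for the pairing of Proposition~\ref{adjoint}, to the image of $\C[X_n]\to\C[X_{n+1}]$. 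The main obstacle is thus purely computational: carrying out the base changes in $\mathrm M_{32}(\Q)$, $\mathrm M_{24}(\Q)$ and $\mathrm M_{121}(\Q[\sqrt{144169}])$ and cross‑checking the outcome against the cited tables.
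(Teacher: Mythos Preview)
Your proposal is correct and follows essentially the same approach as the paper: reduce to a finite linear-algebra computation by expressing the explicit map $\phi_n$ in the Hecke eigenbases of $\C[X_n]$ and $\C[X_{n+1}]$, and read off $\mathrm{Res}\,\pi$ from the nonzero coefficients. The only cosmetic difference is that the paper phrases the computation on the side of $\C[X_n]$ (pushing forward the $v_i$ and expanding $\phi_{23}(v_i)=\sum_j\alpha_{i,j}w_j$) rather than on the dual side $\C^{X_{n+1}}\to\C^{X_n}$ as you do, but since the spectrum is simple these are the same matrix up to transposition and yield the same nonzero pattern.
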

\begin{proof}
On se contente ici de détailler le cas où $\pi \in \Pi_{\mathrm{disc}}(\mathrm{O}_{24})$ (la même méthode s'applique lorsque $\pi\in\Pi_{\mathrm{cusp}}(\mathrm{SO}_{25})$).

On possède une base $v_i$ de vecteurs propres pour l'action de $\mathrm{H}(\mathrm{O}_{23})$ sur $\C[X_{23}]$, ainsi qu'une base $w_j$ de vecteurs propres pour l'action de $\mathrm{H}(\mathrm{O}_{24})$ sur $\C[X_{24}]$ (grâce à la matrice $T_{23}$ déterminée précédemment, et à la matrice de $\mathrm{T}_2$ sur $\Z[X_{24}]$ donnée dans \cite{NV}). On connaît de plus les représentations $\pi'_i\in \Pi_{\mathrm{cusp}}(\mathrm{SO}_{23})$ et $\pi_j\in \Pi_{\mathrm{disc}}(\mathrm{O}_{24})$ respectivement associées aux $v_i$ et aux $w_j$ (voir table \ref{psi23} et \cite[Table C.5]{CL} par exemple).

Comme on a déterminé l'application $\phi_{23}:\C[X_{23}]\rightarrow \C[X_{24}]$, on peut construire les éléments $\alpha_{i,j}\in \C$ vérifiant : $(\forall \, i\in \{1,\dots,32\})\ \phi_{23}(v_i) = \sum_{j=1}^{24} \alpha_{i,j} w_j.$

Par définition, l'ensemble $\mathrm{Res}\, \pi_j$ se déduit de l'équivalence : $\pi'_i\in \mathrm{Res}\, \pi_j \Leftrightarrow \alpha_{i,j}\neq 0.$
\end{proof}
\begin{theo} Soient $n=1,2$ ou $3$, $\pi\in \Pi_{\mathrm{disc}}(\mathrm{O}_{8n})$ (respectivement $\pi\in \Pi_{\mathrm{cusp}}(\mathrm{SO}_{8n+1})$) tel que $\pi_\infty$ est triviale, et $\pi'\in  \Pi_{\mathrm{cusp}}(\mathrm{SO}_{8n-1})$ (respectivement $\pi'\in  \Pi_{\mathrm{disc}}(\mathrm{O}_{8n})$) tel que $\pi'_\infty$ est aussi triviale.

Alors $\pi'\in \mathrm{Res}\, \pi$ si, et seulement si, il existe des multi-ensembles finis $\Pi_1,\Pi_2,\Pi_3\subset \cup_{m}\Pi_{\mathrm{cusp}}(\mathrm{PGL}_m)$, et des applications $d_1:\Pi_1\rightarrow \N^*$ et $d_2:\Pi_2\rightarrow \N^*$ tels que :
$$\psi (\pi,\mathrm{St}) = \bigoplus_{\pi_i\in \Pi_1} \pi_i [d_1( \pi_i)] \ \oplus\ \bigoplus_{\pi_j\in \Pi_2} \pi_j [d_2( \pi_j)],$$
$$\psi (\pi',\mathrm{St}) = \bigoplus_{\pi_i\in \Pi_1} \pi_i [d_1( \pi_i)+1] \ \oplus\ \bigoplus_{\pi_j\in \Pi_2} \pi_j [d_2( \pi_j)-1] \ \oplus\ \bigoplus_{\pi_k\in \Pi_3} \pi_k.$$
\end{theo}
\begin{proof}
Le cas $n=3$ découle directement d'une inspection des tables \ref{res23}, \ref{res24} et \ref{res242}. Le cas $n=2$ se déduit facilement des matrices de l'opérateur $\mathrm{T}_2$ sur $\C[X_{15}]$, $\C[X_{16}]$ et $\C[X_{17}]$ (données dans \cite[Annexe B]{CL} pour $X_{15}$ et $X_{17}$, et dans \cite[ch. III, \S 3]{CL} pour $X_{16}$ par exemple). Le cas $n=1$ est évident (comme $\vert X_7\vert = \vert X_8\vert = \vert X_9\vert =1$).
\end{proof}

Ce résultat valide dans ces cas particuliers la conjecture formulée par Gan-Gross-Prasad, qui conclut l'exposé \cite[Classical groups, the local case]{GGP}. Suivant les mêmes notations, il s'agit des cas où $n=m-1$, avec $m\in \{8,9,16,17,24,25\}$.
\clearpage
\section{Tables de résultats.}\label{5}
\begin{table}[h!] \begin{bigcenter} \renewcommand\arraystretch{1.2} 
 \caption{Paramètres standards $\psi(\pi',\mathrm{St})$ des éléments $\pi'\in \mathrm{Res}\, \pi$, pour les représentations $\pi\in \Pi_{\mathrm{cusp}}(\mathrm{SO}_{25})$ telles que $\pi_{\infty}$ est triviale, dont les valeurs propres associées pour $\mathrm{T}_2$ ne sont pas entières.}\label{res242} \end{bigcenter} \end{table}
\clearpage

\end{document}